   \def\cB{{\cal B}}  
\def\dbD{\mathbb{D}}     
\def\dbE{\mathbb{E}}     
\def\dbF{\mathbb{F}}   \def\cF{{\cal F}}  
\def\dbH{\mathbb{H}}   \def\cH{{\cal H}}
\def\dbP{\mathbb{P}}     
\def\dbR{\mathbb{R}}
      \def\lt{\left}       \def\hb{\hbox}
\def\ms{\medskip}        \def\rt{\right}      \def\ae{\hbox{\rm a.e.}}
\def\bs{\bigskip}        \def\lan{\langle}    \def\as{\hbox{\rm a.s.}}
\def\ds{\displaystyle}   \def\ran{\rangle}    \def\tr{\hbox{\rm tr$\,$}}
\def\ts{\textstyle}         
\def\no{\noindent}
\def\rf{\eqref}            \def\hp{\hphantom}
\def\deq{\triangleq}     \def\({\Big (}       \def\nn{\nonumber}
\def\les{\leqslant}      \def\){\Big )}       
      \def\[{\Big[}        \def\cl{\overline}
\def\ti{\tilde}          \def\]{\Big]}        
      \def\q{\quad}        
         \def\qq{\qquad}      \def\1n{\negthinspace}
\def\cd{\cdot}           \def\2n{\1n\1n}      \def\3n{\1n\1n\1n}
\def\a{\alpha}              \def\Om{\Omega}   \def\om{\omega}
         \def\D{\Delta}           
\def\z{\zeta}         \def\Th{\Theta}  \def\th{\theta}    \def\si{\sigma}
\def\e{\varepsilon}     \def\l{\lambda}        
    \def\t{\tau}     \def\f{\varphi}  \def\i{\infty}   
\def\ba{\begin{array}}                \def\ea{\end{array}}
\def\bel{\begin{equation}\label}      \def\ee{\end{equation}}
\newtheorem{theorem}{Theorem}[section]
\newtheorem{proposition}[theorem]{Proposition}
\newtheorem{corollary}[theorem]{Corollary}
\newtheorem{lemma}[theorem]{Lemma}
\newtheorem{remark}[theorem]{Remark}
\newenvironment{taggedassumption}[1]
 {\taggedassumptionx}
 {\endtaggedassumptionx}
\sloppy  \allowdisplaybreaks[4]
\begin{document}

\title{\bf Extended Backward Stochastic Volterra Integral Equations,
        Quasilinear Parabolic Equations, and Feynman-Kac Formula}
\author{
        Hanxiao Wang\thanks{School of Mathematical Sciences, Fudan University,
                    Shanghai 200433, China (Email: {\tt hxwang14@} {\tt fudan.edu.cn}). This author is supported in part by the China Scholarship Council, while visiting University of Central Florida.}}
\maketitle

\no\bf Abstract. \rm
In this paper, we establish the relationship between backward stochastic Volterra integral equations (BSVIEs, for short)
and a kind of non-local quasilinear (and possibly degenerate) parabolic equations.
We first  introduce  the  extended  backward stochastic  Volterra  integral  equations  (EBSVIEs,  for  short).   Under  some  mild  conditions,  we establish the well-posedness of EBSVIEs and
obtain some regularity results of the adapted solution to the EBSVIEs via Malliavin calculus.
We show that a given function expressed in terms of the solution to the EBSVIEs  solves
a certain system of non-local parabolic partial differential equations (PDEs, for short),
which generalizes the famous nonlinear Feynman-Kac formula in Pardoux--Peng \cite{Pardoux--Peng 1992}.

\ms

\no\bf Keywords. \rm
Backward stochastic Volterra integral equation,
probabilistic representation, nonlinear Feynman-Kac formula,
time-inconsistent, quasilinear parabolic partial differential equations.

\ms

\no\bf AMS subject classifications. \rm 60H20, 45D05, 35K40, 35k59.
\section{Introduction}

Let $(\Om,\cF,\dbP)$ be a complete probability space on which a $d$-dimensional Brownian motion $W=\{W(t);0\les t<\i\}$ is defined,
with $\dbF=\{\cF_t\}_{t\geq 0}$ being the natural filtration of $W$ augmented by all the $\dbP$-null sets in $\cF$.
In this paper, we consider the following stochastic integral equation in $\dbR^m$,
\bel{ebsvie-I}
Y(t,s)=\psi(t) + \int_s^T g(t,r,Y(t,r),Y(r,r),Z(t,r))dr-\int_s^T Z(t,r)dW(r).
\ee
We call \rf{ebsvie-I} an {\it extended backward stochastic Volterra integral equation} (EBSVIE, for short).
By an {\it adapted solution} to \rf{ebsvie-I}, we mean a pair of $\dbR^m\times\dbR^{m\times d}$-valued random
fields $(Y(\cd,\cd),Z(\cd,\cd))=\{(Y(t,s),Z(t,s));0\les t, s\les T\}$ such that
\begin{enumerate}[(i)]
\item for each fixed $0\les t\les T$, $Y(t,\cd)$ is $\dbF$-progressively measurable and continuous,

\item for each fixed $0\les t\les T$, $Z(t,\cd)$ is $\dbF$-progressively measurable, and

\item \rf{ebsvie-I} is satisfied in the usual It\^{o} sense for Lebesgue-almost every $t\in[0,T]$.
\end{enumerate}
Here, $\dbR^m$ is the usual $m$-dimensional Euclidean space consisting of all $m$-tuple of real numbers, and $\dbR^{m\times d}$ is the set of all $m\times d$ real matrices.
It is noteworthy that condition  (i)  implies  that  $Y(r,r);0\les r\les T$ is well-defined and $\dbF$-progressively measurable.
In \rf{ebsvie-I}, $g$ and $\psi$ are called the {\it generator} and the {\it free term}, respectively.

\ms
Let us look at some special cases of EBSVIE \rf{ebsvie-I}.
Suppose
$$g(t,s,y,y',z)=g(t,s,y,z),\q\forall (t,s,y,y',z)\in[0,T]^2\times \dbR^m\times\dbR^m\times\dbR^{m\times d},$$
then EBSVIE \rf{ebsvie-I} is reduced to the following form:
\bel{ebsde} Y(t,s)=\psi(t) + \int_s^T g(t,r,Y(t,r),Z(t,r))dr - \int_s^T Z(t,r)dW(r),\ee
which is a family of so-called {\it backward stochastic differential equations} (BSDEs, for short) parameterized by $t\in[0,T]$;
see  \cite{Pardoux--Peng 1990, Karoui--Peng--Quenez 1997, Ma--Yong 1999, Zhang 2017} for systematic discussions of BSDEs.

\ms
On the other hand, if
$$g(t,s,y,y',z)=g(t,s,y',z),\q\forall(t,s,y,y',z)\in[0,T]^2\times \dbR^m\times\dbR^m\times\dbR^{m\times d},$$
let $s=t$ and $Y(t)=Y(t,t)$, then  EBSVIE \rf{ebsvie-I} is reduced to the following form:
\bel{bsvie} Y(t)=\psi(t) + \int_t^T g(t,r,Y(r),Z(t,r))dr - \int_t^T Z(t,r)dW(r), \ee
which is a so-called {\it backward stochastic Volterra integral equation}
(BSVIE, for short).
This is exactly why we call \rf{ebsvie-I} an extended backward stochastic Volterra integral equation.
BSVIEs of the form \rf{bsvie} was initially studied by Lin \cite{Lin 2002} and followed by several other researchers:
Aman and N'Zi \cite{Aman-N'Zi 2005}, Yong \cite{Yong 2007},  Ren \cite{Ren 2010}, Anh, Grecksch, and Yong \cite{Anh-Grecksch-Yong 2011},
Djordjevi'c and Jankovi'c \cite{Djordjevic-Jankovic 2013,Djordjevic-Jankovic 2015},
Hu and {\O}ksendal \cite{Hu 2018}, and the references therein.
Recently, Wang, Sun, and Yong \cite{Wang--Sun--Yong 2018} established the well-posedness of quadratic BSVIEs
(which means the generator $g(t,s,y,z)$ of \rf{bsvie} has a quadratic growth in $z$) and explored the applications of quadratic BSVIEs to equilibrium dynamic risk measure and equilibrium recursive utility process.

\ms
BSVIE of the more general form
\bel{bsvie-II} Y(t)=\psi(t) + \int_t^T g(t,r,Y(r),Z(t,r),Z(r,t))dr -\int_t^T Z(t,r)dW(r) \ee
was firstly introduced by Yong \cite{Yong 2008} in his research on optimal control of forward stochastic Volterra integral equations (FSVIEs, for short).
The BSVIE \rf{bsvie-II} has a remarkable feature that its solution might not be unique
due to lack of restriction on the term $Z(r,t);0\leq t\leq r \leq T$.
Suggested by the nature of the equation from the adjoint equation in the Pontryagin type maximum principle, Yong \cite{Yong 2008}
introduced the notion of {\it adapted M-solution}: A pair $(Y(\cd),Z(\cd\,,\cd))$ is called an adapted M-solution to \rf{bsvie-II}, if in addition to (i)--(iii) stated above, the following condition is also satisfied:
\bel{M-solution} Y(t)=\dbE[Y(t)]+\int_0^tZ(t,s)dW(s),\q\ae~t\in[0,T],~\as \ee
Under usual Lipschitz conditions, well-posedness was established in \cite{Yong 2008} for the adapted M-solutions to  BSVIEs of form \rf{bsvie-II}.
This important development has triggered extensive research on BSVIEs and their applications. For instance, Anh, Grecksch and Yong \cite{Anh-Grecksch-Yong 2011} investigated BSVIEs in Hilbert spaces; Shi, Wang and Yong \cite{Shi--Wang--Yong 2013} studied well-posedness of BSVIEs containing mean-fields (of the unknowns); Ren \cite{Ren 2010}, Wang and Zhang \cite{Wang-Zhang p} discussed BSVIEs with jumps; Overbeck and R\"oder \cite{Overbeck-Roder p} even developed a theory of path-dependent BSVIEs; Numerical aspect was considered by Bender and Pokalyuk \cite{Bender-Pokalyuk 2013}; relevant optimal control problems were studied by Shi, Wang and Yong \cite{Shi-Wang-Yong 2015},
Agram and {\O}ksendal \cite{Agram-Oksendal 2015}, Wang and Zhang \cite{Wang--Zhang 2017}, and Wang \cite{Wang 2018}; Wang and Yong \cite{Wang--Yong 2015} established various comparison theorems for both adapted solutions and adapted M-solutions to BSVIEs in multi-dimensional Euclidean spaces.

\ms
Recently, inspired by the Four-Step Scheme in the theory of forward-backward stochastic differential equations (FBSDEs, for short) (\cite{Ma--Yong 1999}),
in the Markovian frame:
\begin{align}
\label{SDE}&X(t)=x+\int_0^tb(s,X(s))ds+\int_0^t\si(s,X(s))dW(s),\\
\label{mbsvie}&Y(t)=\psi(t,X(T)) + \int_t^T g(t,s,X(s),Y(s),Z(t,s))ds -\int_t^T Z(t,s)dW(s),
\end{align}
Wang--Yong \cite{Wang--Yong 2018} proved that: If $\Theta(\cd,\cd,\cd)$ is a classical solution
to the following PDE:
\bel{FBPDE}\left\{\begin{aligned}
&\Th_s(t,s,x)+{1\over 2}\si(s,x)^\top\Th_{xx}(t,s,x)\si(s,x)+\Th_x(t,s,x)b(s,x)\\
&\qq\q +g(t,s,x,\Th(s,s,x),\Th_x(t,s,x)\si(s,x))=0,\q (t,s,x)\in[0,T]\times[t,T]\times\dbR^d,\\
&\Th(t,T,x)= \psi(t,x),\qq (t,x)\in[0,T]\times\dbR^d,
\end{aligned}\right.\ee
then
\bel{rep-sde-bsvie}
Y(t)=\Th(t,t,X(t)),\q Z(t,s)=\Th_x(t,s,X(s))\si(s,X(s)),~(t,s)\in[0,T]\times[t,T]
\ee
is the unique adapted solution to Markovian BSVIE \rf{mbsvie}, where
\begin{equation*}
\si(s,x)^\top\Th_{xx}(t,s,x)\si(s,x)=\sum_{i=1}^d\begin{pmatrix}\si_i(s,x)^\top\Th^1_{xx}(t,s,x)\si_i(s,x) \\
                                                                 \si_i(s,x)^\top\Th^2_{xx}(t,s,x)\si_i(s,x)\\
                                                                 \vdots\\
                                                                 \si_i(s,x)^\top\Th^m_{xx}(t,s,x)\si_i(s,x) \end{pmatrix},
\end{equation*}
with
\begin{equation*}
    \si(s,x)=\Big(\si_1(s,x),\si_2(s,x),\cdot\cdot\cdot,\si_d(s,x)\Big)\q \hbox{and}\q  \Th(t,s,x)=\begin{pmatrix}\Th^1(t,s,x) \\
                                                                                                     \Th^2(t,s,x)\\
                                                                                                        \vdots\\
                                                                                                     \Th^m(t,s,x)\end{pmatrix}.
\end{equation*}
They also proved that under some regularity and boundness conditions of the coefficients and the uniformly positive condition, i.e., there exists a constant $\bar\si>0$ such that
\bel{uni-pos}
|\si(s,x)\xi|^2\geq \bar\si|\xi|^2,\q\forall(s,x,\xi)\in[0,T]\times\dbR^d\times\dbR^d,
\ee
then system \rf{FBPDE} admits a unique classical solution.
This result provides a representation of adapted solutions via a solution to the (non-classical) partial differential equation \rf{FBPDE}, together with the solution $X(\cd)$
to the (forward) stochastic differential equation \rf{SDE}.
We emphasize that the above PDE is
non-local, because the $g$-term involves values
$\Th(s,s,x)$.
To our best knowledge, the PDEs of form \rf{FBPDE} appeared the first time in the study of time-inconsistent optimal control problems.
In the time-inconsistent optimal control problems, the PDE \rf{FBPDE} serves as an equilibrium HJB equation,
which is used to express the equilibrium strategy and equilibrium vale function (\cite{Yong 2012}, see also \cite{Wei--Yong--Yu 2017}, \cite{Mei--Yong 2017}).

\ms
In 1992, Pardoux--Peng \cite{Pardoux--Peng 1992} considered the following Markovian forward-backward stochastic differential equations (FBSDEs, for short):
\begin{align}
\label{SDET}&X^{t,x}(s)=x+\int_t^sb(r,X^{t,x}(r))dr+\int_t^s\si(r,X^{t,x}(r))dW(r),\\
\label{mbsde}&Y^{t,x}(s)=\psi(X^{t,x}(T)) + \int_s^T g(r,X^{t,x}(r),Y^{t,x}(r),Z^{t,x}(r))dr -\int_s^T Z^{t,x}(r)dW(r),
\end{align}
where $t,x\in[0,T)\times\dbR^d$ and $b(\cd),\si(\cd),\psi(\cd),g(\cd)$ are deterministic fuctions.
Apparently, under some mild conditions of the coefficients, the above FBSDE admits a unique adapted solution $(X^{t,x}(\cd),Y^{t,x}(\cd),Z^{t,x}(\cd))$.
In \cite{Pardoux--Peng 1992}, they obtained that: if the following PDEs
\bel{FBPDE-BSDE}\left\{\begin{aligned}
&\Th_s(s,x)+{1\over 2}\si(s,x)^\prime\Th_{xx}(s,x)\si(s,x)+\Th_x(s,x)b(s,x)\\
&\qq\q +g(s,x,\Th(s,x),\Th_x(s,x)\si(s,x))=0,\q (s,x)\in[0,T]\times\dbR^d,\\
&\Th(T,x)= \psi(x),\qq x\in\dbR^d
\end{aligned}\right.\ee
has a classical solution, then
\bel{rep-pde-sde-bsde}
Y^{t,x}(s)=\Th(s,X^{t,x}(s)),\q Z^{t,x}(s)=\Th_x(s,X^{t,x}(s))\si(s,X^{t,x}(s)),
\ee
which could be regarded as a special case of \rf{rep-sde-bsvie}.
But, more remarkable, under some regularity conditions (but without uniformly positive condition \rf{uni-pos}) of the coefficients,
they proved that
\bel{rep-sde-bsde-pde}\ti\Th(t,x)\deq Y^{t,x}(t),~(t,x)\in[0,T]\times\dbR^d\ee
is the unique classical solution to (possibly degenerate) parabolic PDE \rf{FBPDE-BSDE},
which is called the {\it nonlinear Feynman-Kac formula} and $Y^{t,x}(t);(t,x)\in[0,T]\times\dbR^d$ is usually called
a {\it probabilistic representation} of the solution to PDE \rf{FBPDE-BSDE}.
This result attracts  extensive research on the probabilistic representation of PDEs.
Among relevant works, we would like to mention Pardoux--Peng \cite{Pardoux--Peng 1994} for the doubly BSDEs and  stochastic PDEs;
Ekren, et al. \cite{Ekren 2014}, Peng--Wang\cite{Peng--Wang 2016}, Zhang \cite[Chapter 11]{Zhang 2017} for  the non-Markovian BSDEs and path-dependent PDEs.
Further, from a numerical application viewpoint, the BSDE representation leads to original probabilistic approximation scheme for the resolution in high
dimension of partial differential equations, as recently investigated in \cite{Pham 2015}.
It is then natural to ask:
Can we give a probabilistic representation of the solution to the following non-local PDEs \rf{FBTPDE}?
\bel{FBTPDE}\left\{\begin{aligned}
&\Th_s(t,s,x)+{1\over 2}\si(s,x)^\prime\Th_{xx}(t,s,x)\si(s,x)+\Th_x(t,s,x)b(s,x)\\
&\qq+g(t,s,x,\Th(t,s,x),\Th(s,s,x),\Th_x(t,s,x)\si(s,x))=0,\q (t,s,x)\in\D[0,T]\times\dbR^d,\\
&\Th(t,T,,x)= \psi(t,x),\qq (t,x)\in[0,T]\times\dbR^d.
\end{aligned}\right.\ee
The objective of this paper is to tackle this problem.
In the above PDEs, note that the nonlinear term $g(t,s,x,y,y',z)$ depends on both  $y$ and $y'$,
which also both appear in the equilibrium HJB equation of time-inconsistent optimal control problems.
Thus, we introduced  the following Markovian EBSVIE:
\begin{align}
\nn Y^{t,x}(s,r)&=\psi(s,X^{t,x}(T)) + \int_r^T g(s,\t,X^{t,x}(\t),Y^{t,x}(s,\t),Y^{t,x}(\t,\t),Z^{t,x}(s,\t))d\t \\
\label{mebsvie} &\qq\qq\qq-\int_r^T Z^{t,x}(s,\t)dW(\t),\q s\in [t,T),~ r\in[s,T],
\end{align}
where $X^{t,x}(\cd)$ is the unique strong solution to SDE \rf{SDET}.
To be more general, we first consider the EBSVIEs of form  \rf{ebsvie-I},
which is an extension of BSDEs \rf{ebsde} with parameters and BSVIEs \rf{bsvie}.
We shall establish the well-posedness of EBSVIEs \rf{ebsvie-I} using the  method introduced in Yong \cite{Yong 2008}.
Under an additional continuity condition, we get a $L^p(\Om;C([0,T];\dbR^m))$-norm estimate
of $Y(s,s);0\leq s\leq T$, where $L^p(\Om;C([0,T];\dbR^m))$ is defined in next section.
In the Markovian frame, by means of Malliavin calculus, we obtain some better regularity results for the adapted solutions to EBSVIEs \rf{FBTPDE}.
More precisely, we prove  that $Y^{s,x}(t,s)\in \mathcal{C}^{0,1,2}([0,T]\times[t,T]\times \dbR^d;\dbR^m)$, which follows that
\bel{rep-sde-bsvie-pde}\ti\Th(t,s,x)\deq Y^{s,x}(t,s)\ee
is the unique classical solution to the non-local PDE \rf{FBTPDE}.

\ms

The rest of this paper is organized as follows.
In Section \ref{Sec:Preliminaries}, we collect some preliminary results and introduce a few elementary notions.
Section \ref{Sec:Well-posedness} is devoted to the study of the well-posedness of EBSVIEs \rf{ebsvie-I}.
In section \ref{Sec:Regularity}, in the Markovian frame, we obtain some regularity property results for the adapted solutions to EBSVIEs \rf{FBTPDE}.
Finally, in section \ref{Sec:EBSVIE-PDE}, we give the  probabilistic representation of \rf{FBTPDE}.

\section{Preliminaries}\label{Sec:Preliminaries}
Recall that $\dbR^m$ is the usual $m$-dimensional Euclidean space and $\dbR^{m\times d}$ is the set of all $m\times d$ real matrices, endowed with the Frobenius inner product $\lan M,N\ran\mapsto\tr[M^\top N]$. We shall denote by $I_d$ the identity matrix of size $d$ and by $|M|$ the Frobenius norm of a matrix $M$.
For $0\les a<b\les T$, we denote by $\cB([a,b])$ the Borel $\si$-field on $[a,b]$
and define the following sets:
\begin{alignat*}{3}
\D[a,b] &\deq\big\{(t,s)\bigm|a\les t\les s\les b\big\},                  \q& \D^c[a,b] &\deq\big\{(t,s)\bigm|a\les s<t\les b\big\},\\
[a,b]^2 &\deq\big\{(t,s)\bigm|a\les t,s\les b\big\}=\D[a,b]\cup\D^c[a,b], \q& \D^*[a,b] &\deq\cl{\D^c[a,b]}.
\end{alignat*}
Note that $\D^*[a,b]$ is a little different from the complement $\D^c[a,b]$ of $\D[a,b]$ in $[a,b]^2$, since both $\D[a,b]$ and $\D^*[a,b]$ contain the diagonal line segment.
For any $t\in[0,T)$, let $\dbF^t=\{\cF^t_s\}_{s\geq 0}$ be the natural filtration of $W(s\vee t)-W(t);0\leq s\leq T$ augmented by all the $\dbP$-null sets in $\cF$.
By the definition of $\dbF=\{\cF_s\}_{s\geq 0}$, we know that $\dbF^0=\{\cF^0_s\}_{s\geq 0}=\{\cF_s\}_{s\geq 0}=\dbF$.
Let $\dbH$, $\dbH^\prime$  be two Euclidean spaces (which could be $\dbR^m$, $\dbR^{m\times d}$, etc.),
$p\in(1,\i]$ be a constant, and $k\geq 0$ be an integer.
We further introduce the following spaces of functions and processes:
\begin{align*}
C^k(\dbH;\dbH^\prime)
&\ts=\Big\{\f:\dbH\to\dbH^\prime~|~\hb{$\f(\cd)$ is $k$-th continuously differentable}\Big\},\\
C_b^k(\dbH;\dbH^\prime)
&\ts=\Big\{\f\in C^k(\dbH;\dbH^\prime)~|~\hb{for any $0<i\leq k$, the  $i$-th order partial derivatives}\\
&\ts\hp{=\Big\{\ }\hb{of $\f(\cd)$ are bounded}\Big\},\\
L^p(a,b;\dbH)&\ts=\Big\{\f:[a,b]\to\dbH~|~h(\cd)~\hb{is $\cB([a,b])$-measurable, }\int_a^b|h(s)|^pds<\i\Big\},\\
L^\i(a,b; L_{\cF_b}^p(\Om;\dbH))
&\ts=\Big\{\f:[a,b]\times\Om\to\dbH~|~\f(\cd)~\hb{is $\cB([a,b])\otimes\cF_b$--measurable,~}\\
&\ts\hp{=\Big\{\ }\ds\sup_{a\les s\les b}\dbE\big[|\f(s)|^p\big]<\i \Big\},\\
L^p_{\cF_b}(\Om;C^U([a,b];\dbH))
&\ts=\Big\{\f:[a,b]\times\Om\to\dbH~|~\f(\cd)~\hb{is $\cB([a,b])\otimes\cF_b$--measurable,~there exists a}\\
&\ts\hp{=\Big\{\ }  \hbox{ modulus of continuity $\rho:[0,\i)\to[0,\i)$} \hbox{ such that}~ \\
&\ts\hp{=\Big\{\ }|\f(t)-\f(s)|\les \rho(|t-s|),~(t,s)\in[a,b], \hbox{~and~} \dbE\big[\ds\sup_{a\les s\les b}|\f(s)|^p\big]<\i \Big\},\\
L_\dbF^p(\Om;C([a,b];\dbH))
&\ts=\Big\{\f:[a,b]\times\Om\to\dbH~|~\f(\cd)~\hb{is continuous, $\dbF$-adapted, }\\
&\ts\hp{=\Big\{\ } \dbE\big[\ds\sup_{a\les s\les b}|\f(s)|^p\big]<\i \Big\},\\
L_\dbF^p(\Om;C(\D[a,b];\dbH))
&\ts=\Big\{\f:\D[a,b]\times\Om\to\dbH~|\hb{for any $t\in[a,b)$, } \f(t,\cd)\in L_\dbF^p(\Om;C([t,b];\dbH)),\\
&\ts\hp{=\Big\{\ }\ds\sup_{a\les t\les b}\dbE\big[\ds\sup_{t\les s\les b}|\f(t,s)|^p\big]<\i \Big\},\\
L_\dbF^p(a,b;\dbH)
&\ts=\Big\{\f:[a,b]\times\Om\to\dbH~|~\f(\cd)~\hb{is $\dbF$-progressively measurable, }\\
&\ts\hp{=\Big\{\ }\dbE\int_a^b|\f(s)|^pds<\i \Big\},\\
L_\dbF^p(\D[a,b];\dbH)
&\ts=\Big\{\f:\1n\D[a,b]\1n\times\1n\Om\1n\to\1n\dbH~|\hb{for any}
~t\1n\in[a,b),~\f(t,\cd)~\hb{is $\dbF$-progressively }\\
&\ts\hp{=\Big\{\ }\qq \hb{measurable on $[t,b]$,}~\ds\sup_{a\les t\les b}\dbE\int_t^b|\f(t,s)|^pds<\i \Big\},\\
\cH^p[a,b]&=L_\dbF^p(\Om;C(\D[a,b];\dbR^m))\times L_\dbF^p(\D[a,b];\dbR^{m\times d}).
\end{align*}

For stochastic differential equation \rf{SDET}, we adopt the following assumption.
\begin{taggedassumption}{(F.1)}\label{ass:F.1}\rm
Let $b:[0,T]\times\dbR^d\to\dbR^d$ and $\si:[0,T]\times\dbR^d\to\dbR^{d\times d}$ be continuous functions.
For any $s\in[0,T]$, let $b(s,\cd)\in C^3_b(\dbR^d;\dbR^d)$ and $\si(s,\cd)\in C^3_b(\dbR^d;\dbR^{d\times d})$.
There exist three constants $C\geq 0$, $K_b\geq 0$,  $K_\si\geq 0$ such that:
\begin{align*}
&|b(s,0)|,~|\si(s,0)|\leq C,\q \forall s\in[0,T],\\
&|b_x(s,x)|, ~|b_{xx}(s,x)|,~ |b_{xxx}(s,x)|\leq K_b,\q\forall (s,x)\in[0,T]\times \dbR^d,\\
&|\si_x(s,x)|, ~|\si_{xx}(s,x)|,~ |\si_{xxx}(s,x)|\leq K_\si,\q \forall (s,x)\in[0,T]\times \dbR^d.
\end{align*}
\end{taggedassumption}

\begin{remark}\rm
Let {\rm\ref{ass:F.1}} hold. For any $(t,x)\in[0,T)\times\dbR^d$ and $p>1$, by the standard result \cite[ Chapter 1, Theorem 6.3]{Yong--Zhou 1999} of SDE,
the SDE \rf{SDET} admits a unique solution $X^{t,x}(\cd)\in L_\dbF^p(\Om;C([t,T];\dbR^d))$.
Moreover, we have $X^{t,x}(\cd)\in L_{\dbF^t}^p(\Om;C([t,T];\dbR^d))$.
\end{remark}

If $u(\cd)$ is a function of $x\in\dbR^d$, for any $h\in\dbR\setminus\{0\}$, let $\D^i_hu(x)\deq h^{-1}[u(x+he_i)-u(x)];~1\leq i\leq d$,
where $e_i$ denotes the $i$-th vector of an arbitrary orthonormal basis of $\dbR^d$.
Define $X^{t,x}_s=X^{t,x}_{s\vee t}$; $(t,s)\in[0,T]^2$, $x\in\dbR^d$.
The following result, whose proof is standard and can be found in \cite{Pardoux--Peng 1992}, establishes the regularity property of SDE \rf{SDET} under the assumption \autoref{ass:F.1}.
\begin{lemma}\label{lmm:well-posedness-SDE}
Let {\rm\ref{ass:F.1}} hold. For any $p\geq 2$, there exists a constant $C_p>0$ such that for any $t,t^\prime\in[0,T]$,
$x,x^\prime\in\dbR^d$, $i\in\{1,...,d\}$, $h,h^\prime\in\dbR\setminus \{0\}$,
\begin{align}
\label {lemma2.1.1}&\dbE\Big[\sup_{0\les s\les T}|X^{t,x}(s)|^p\Big]\les C_p(1+|x|^p),\q\dbE\Big[\sup_{0\les s\les T}|\D^i_h X^{t,x}(s)|^p\Big]\les C_p,\\
&\dbE\Big[\sup_{0\les s\les T}|X^{t,x}(s)-X^{t^\prime,x^\prime}(s)|^p\Big]\les C_p(1+|x|^p)(|x-x^\prime|^p+|t-t^\prime|^{{p\over 2}}),\\
&\dbE\Big[\sup_{0\les s\les T}|\D^i_h X^{t,x}(s)-\D^i_{h^\prime} X^{t^\prime,x^\prime}(s)|^p\Big]\les C_p(1+|x|^p)(|x-x^\prime|^p+|h-h^\prime|^p+|t-t^\prime|^{{p\over 2}}),\\
&\dbE\Big[\sup_{0\les s\les T}|\D^i_h X^{t,x}(s)-\D^i_{h^\prime} X^{t,x^\prime}(s)|^p\Big]\les C_p(|x-x^\prime|^p+|h-h^\prime|^p).
\end{align}
\end{lemma}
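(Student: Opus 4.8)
The statement to prove is Lemma~\ref{lmm:well-posedness-SDE}, which gives moment estimates for the SDE \rf{SDET} and its difference quotients $\D^i_h X^{t,x}$.

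\bigskip

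\textbf{Proof proposal.} The plan is to treat the four estimates in turn, each time reducing matters to a standard application of the Burkholder--Davis--Gundy (BDG) inequality, Gronwall's lemma, and the Lipschitz/boundedness hypotheses in \autoref{ass:F.1}, with the higher-order differentiability of $b,\si$ entering only to control the difference quotients. For the first pair of estimates in \rf{lemma2.1.1}, I would write the integral form of \rf{SDET}, take $p$-th powers, apply BDG to the stochastic integral, use $|b(s,x)|\les |b(s,0)|+K_b|x|\les C+K_b|x|$ (and similarly for $\si$), and then Gronwall to get $\dbE[\sup_{0\les s\les T}|X^{t,x}(s)|^p]\les C_p(1+|x|^p)$. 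For the difference quotient, set $V^i_h(s)\deq\D^i_h X^{t,x}(s)$; subtracting the equations for $X^{t,x+he_i}$ and $X^{t,x}$ and dividing by $h$, one finds that $V^i_h$ solves a linear SDE with bounded (random) coefficients, namely $V^i_h(s)=e_i+\int_t^s \widetilde b(r)V^i_h(r)\,dr+\int_t^s \widetilde\si(r)V^i_h(r)\,dW(r)$, where $\widetilde b(r)=\int_0^1 b_x(r,X^{t,x}(r)+\th h(\cdots))d\th$ is bounded by $K_b$ uniformly (likewise $\widetilde\si$ by $K_\si$); the same BDG/Gronwall argument then gives a bound independent of $x,h$.

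\smallskip

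For the third estimate (continuity in the initial data $(t,x)$), I would set $\xi(s)\deq X^{t,x}(s)-X^{t',x'}(s)$ and handle the cases $t\le t'$ and $t'\le t$ separately; by the convention $X^{t,x}_s=X^{t,x}_{s\vee t}$ we have $X^{t,x}(s)=x$ for $s\le t$, so on $[t,t']$ the difference is $x-X^{t',x'}(s)$ whose $p$-th moment over that interval is $\les C_p(1+|x|^p)(|x-x'|^p+|t-t'|^{p/2})$ using the first estimate and the standard $L^p$-modulus $\dbE|X^{t',x'}(s)-x'|^p\les C_p(1+|x'|^p)|s-t|^{p/2}$ for $s\in[t',\cdot]$ wait --- more carefully, $\dbE|X^{t',x'}(s)-x'|^p\les C_p(1+|x'|^p)|s-t'|^{p/2}$ on $[t',T]$ via BDG applied to the short interval; then on $[t'\vee t,T]$ one runs the usual difference estimate: $\xi$ satisfies an SDE started from $\xi(t'\vee t)$ with drift/diffusion differences controlled by $K_b|\xi(r)|$, $K_\si|\xi(r)|$, so BDG and Gronwall propagate the bound on $\dbE|\xi(t'\vee t)|^p$ to all of $[t'\vee t,T]$. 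The fourth estimate is the analogue for difference quotients: writing the linear SDEs satisfied by $\D^i_h X^{t,x}$ and $\D^i_{h'} X^{t,x'}$ and subtracting, their difference solves a linear SDE whose inhomogeneous terms involve $(\widetilde b-\widetilde b')\D^i_{h'}X^{t,x'}$ etc.; here the second-order bounds $|b_{xx}|\le K_b$, $|\si_{xx}|\le K_\si$ give $|\widetilde b(r)-\widetilde b'(r)|\les K_b(|X^{t,x}(r)-X^{t,x'}(r)|+|h-h'|+\cdots)$, which combined with the already-established uniform bound on $\dbE[\sup|\D^i_{h'}X^{t,x'}|^p]$ and Hölder's inequality yields the stated $C_p(|x-x'|^p+|h-h'|^p)$; the third estimate (difference quotients with varying $(t,x)$) follows the same scheme but additionally tracks the short-interval contribution on $[t\wedge t',t\vee t']$, producing the extra $|t-t'|^{p/2}$ term.

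\smallskip

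The main obstacle --- or rather, the only place requiring genuine care rather than routine estimation --- is the bookkeeping when the time parameters $t,t'$ differ: one must correctly split the interval, use the flow/convention $X^{t,x}_s=X^{t,x}_{s\vee t}$, and verify that the "initial condition" for the difference equation on the common interval $[t\vee t',T]$ already satisfies the desired bound, which itself requires the short-time $L^p$-estimate $\dbE|X^{t',x'}(s)-x'|^p\les C_p(1+|x'|^p)|s-t'|^{p/2}$ and, for the derivative version, $\dbE|\D^i_h X^{t,x}(s)-e_i|^p\les C_p|s-t|^{p/2}$. Once the interval splitting is set up, every piece is a BDG $+$ Gronwall argument using exactly the bounds in \autoref{ass:F.1}; since the lemma is classical (and the excerpt already credits \cite{Pardoux--Peng 1992}), I would present these reductions and then say the remaining computations are standard.
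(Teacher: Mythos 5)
Your outline is the standard BDG--Gronwall argument (with the interval splitting at $t\vee t'$ and the linear SDEs for the difference quotients), which is precisely the classical proof the paper itself defers to by citing Pardoux--Peng; the paper gives no independent argument. The sketch is correct and matches that approach, including the one point of genuine care you identify, namely the short-time modulus $\dbE|X^{t',x'}(s)-x'|^p\les C_p(1+|x'|^p)|s-t'|^{p/2}$ feeding the initial condition on the common interval.
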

\autoref{lmm:well-posedness-SDE} immediately implies the following proposition (whose proof can be also found in \cite{Pardoux--Peng 1992}):
\begin{proposition}\label{diff-SDE}
For any $(t,s)\in\D[0,T]$ and $x\in\dbR^d$, the mapping $x\mapsto X^{t,x}(s)$ is a.s. differentiable.
The matrix of partial derivatives $\nabla X^{t,x}(s);t\leq s\leq T$ possesses a version which is a.s. continuous.
Further, $\nabla X^{t,x}(\cd)$ is the unique solution to the following linear stochastic differential equation:
\bel{rep-diff-SDE}
\nabla X^{t,x}(s)=I_d+\int_t^s b_x(r,X^{t,x}(r))\nabla X^{t,x}(r)dr+\int_t^s\si_x(r,X^{t,x}(r))\nabla X^{t,x}(r)dW(r).
\ee
\end{proposition}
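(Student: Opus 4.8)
The plan is to combine the a priori estimates of \autoref{lmm:well-posedness-SDE} with the Kolmogorov--Chentsov continuity criterion and a standard Gronwall/Burkholder--Davis--Gundy stability argument for linear SDEs. As a preliminary step, note that under \ref{ass:F.1} the processes $r\mapsto b_x(r,X^{t,x}(r))$ and $r\mapsto\si_x(r,X^{t,x}(r))$ are $\dbF$-progressively measurable and bounded (by $K_b$ and $K_\si$ respectively), so the linear equation \rf{rep-diff-SDE} falls within the classical existence and uniqueness theory for SDEs with globally Lipschitz coefficients. Hence \rf{rep-diff-SDE} admits a unique solution, which I denote temporarily by $\Phi^{t,x}(\cd)\in L^p_\dbF(\Om;C([t,T];\dbR^{d\times d}))$ for every $p\ges2$, with $\dbE[\sup_{t\les s\les T}|\Phi^{t,x}(s)|^p]\les C_p$. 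The point is then to identify $\Phi^{t,x}(\cd)$ with the matrix of partial derivatives of $x\mapsto X^{t,x}(\cd)$.

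To do so, fix $i\in\{1,\dots,d\}$ and $h\in\dbR\setminus\{0\}$, and write $\delta^i_h X^{t,x}(r)\deq X^{t,x+he_i}(r)-X^{t,x}(r)$, so that $\D^i_h X^{t,x}(r)=h^{-1}\delta^i_h X^{t,x}(r)$. Subtracting the copies of \rf{SDET} for $X^{t,x+he_i}$ and $X^{t,x}$, dividing by $h$, and using the fundamental theorem of calculus along the segment joining the two solutions, one finds that $\D^i_h X^{t,x}(\cd)$ solves the linear SDE
\begin{equation*}
\D^i_h X^{t,x}(s)=e_i+\int_t^s\bar b^i_h(r)\,\D^i_h X^{t,x}(r)\,dr+\int_t^s\bar\si^i_h(r)\,\D^i_h X^{t,x}(r)\,dW(r),
\end{equation*}
where $\bar b^i_h(r)\deq\int_0^1 b_x\big(r,X^{t,x}(r)+\th\,\delta^i_h X^{t,x}(r)\big)\,d\th$ and $\bar\si^i_h(r)$ is defined analogously. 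By \autoref{lmm:well-posedness-SDE}, $\sup_{t\les r\les T}|\delta^i_h X^{t,x}(r)|\to0$ in $L^p$ and a.s. as $h\to0$; combining this with the continuity and boundedness of $b_x,\si_x$ and dominated convergence gives $\bar b^i_h\to b_x(\cd,X^{t,x}(\cd))$ and $\bar\si^i_h\to\si_x(\cd,X^{t,x}(\cd))$ in $L^p([t,T]\times\Om)$. A routine stability estimate for linear SDEs (It\^o's formula, the Burkholder--Davis--Gundy inequality, and Gronwall's lemma) then yields $\dbE\big[\sup_{t\les s\les T}|\D^i_h X^{t,x}(s)-\Phi^{t,x}(s)e_i|^p\big]\to0$ as $h\to0$.

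It remains to upgrade this $L^p$-convergence to a.s. differentiability with a continuous derivative. Fix $t$ and choose $p>d+1$. The last estimate in \autoref{lmm:well-posedness-SDE} says precisely that the random field $(x,h)\mapsto\D^i_h X^{t,x}(\cd)$, valued in $C([t,T];\dbR^d)$, satisfies a Kolmogorov-type bound on $\dbR^d\times(\dbR\setminus\{0\})$ with exponent $p>d+1$; hence it has a modification that is jointly continuous in $(x,h)$ and extends continuously to $\{h=0\}$, the value of the extension at $h=0$ being $\Phi^{t,x}(\cd)e_i$ by the previous paragraph. Continuity of this modification in $h$ at $h=0$, uniformly in $s\in[t,T]$, is exactly the statement that $x\mapsto X^{t,x}(s)$ is partially differentiable in the direction $e_i$ with derivative $\Phi^{t,x}(s)e_i$; letting $i$ range over $\{1,\dots,d\}$ and noting that the resulting partial derivatives depend continuously on $x$ (again by the same estimate) shows that $x\mapsto X^{t,x}(s)$ is a.s. differentiable and that $\nabla X^{t,x}(s)$ has an a.s. continuous version equal to $\Phi^{t,x}(s)$, which in particular has continuous paths in $s$ since it solves an SDE.

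Since $\Phi^{t,x}(\cd)$ was defined as the unique solution of \rf{rep-diff-SDE}, identifying $\nabla X^{t,x}(\cd)=\Phi^{t,x}(\cd)$ finishes the argument; alternatively one can pass to the limit $h\to0$ directly in the approximate variational equation above to verify that $\nabla X^{t,x}(\cd)$ solves \rf{rep-diff-SDE}. The one genuinely delicate point is this last step: passing from $L^p$-convergence of the difference quotients to a.s. differentiability together with continuity of the derivative, which is where the sharp modulus-of-continuity estimates of \autoref{lmm:well-posedness-SDE} and the Kolmogorov--Chentsov theorem applied to the two-parameter field $(x,h)$ are indispensable; everything else reduces to routine Gronwall and Burkholder--Davis--Gundy manipulations.
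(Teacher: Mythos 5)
Your argument is correct and is essentially the standard proof that the paper defers to (Pardoux--Peng 1992): the proposition is presented there as an immediate consequence of the estimates in \autoref{lmm:well-posedness-SDE}, and your route --- approximate variational equation for the difference quotients, $L^p$-stability for the linear SDE via Gronwall and Burkholder--Davis--Gundy, and the Kolmogorov--Chentsov criterion applied to the two-parameter field $(x,h)\mapsto\D^i_hX^{t,x}(\cd)$ with $p>d+1$ to upgrade to a.s.\ differentiability with a continuous version --- is exactly that argument. No gaps worth flagging.
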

Now, we briefly recall some relevant notations and results about Malliavin calculus, which will be used below.
Let $\Xi$ be the set of all (scalar) $\cF _T$-measurable random variables $\xi$ of form
\bel{xi}\xi=f\lt( \int_0 ^T h(s)dW(s)\rt),\ee
where $f\in C_b^1(\dbR^d;\dbR)$, $h(\cd)\equiv(h_1(\cd), . . . , h_d(\cd))$ with $h_i(\cd)\in L^\i(0,T;\dbR^d)$.
For any $\xi\in\Xi$, define
$$D^i_r\xi=\lt\langle f_x\lt( \int_0 ^T h(s)dW(s)\rt),h_i(r)\rt\rangle,\q 0\leq r\leq T,~1\leq i\leq d.$$
We call $D^i_r\xi;0\leq r\leq T$ the {\it Malliavin derivative} of $\xi$ with respect to $W^i(\cd)$.
Next, for any $\xi\in\Xi$ (of form \rf{xi}), we denote its $1,p$-norm by:
$$
\|\xi\|_{\dbD_{1,p}}^p\deq \dbE\Big[|\xi|^p+\int_0^T|D_r\xi|^pdr\Big].
$$
Clearly, $\|\cd\|_{\dbD_{1,p}}$ is a norm. Let $\dbD_{1,p}$ be the completion of $\Xi$ under the norm $\|\cd\|_{\dbD_{1,p}}$.
It is known \cite{Nualart 1995} that operator $D=(D^1,D^2,...,D^d)$ admits a closed extension on $\dbD_{1,p}$, and
$$ \xi~ \hbox{is $\cF_t$-measurable}\Rightarrow D_r^i\xi=0, \forall r\in(t,T], 1\leq i\leq d.$$
The following result is concerned with the Malliavan derivative of $X^{t,x}(\cd)$, we refer the reader to \cite[Lemma 1.1]{Pardoux--Peng 1992} for the proof.
\begin{lemma}\label{lmm:Malliavan-SDE}
Let {\rm\ref{ass:F.1}} hold. For any $(t,s)\in\D[0,T]$ and $x\in\dbR^d$, $X^{t,x}(s)\in(\dbD_{1,2})^d$, and a version of $\{D_rX^{t,x}(s);s,r\in[0,T]\}$ is given by:
\begin{enumerate}[(i)]
\item $D_rX^{t,x}(s)=0,~r\in[0,T]\setminus(t,s]$.

\item For any $t<r\leq T$, $\{D_rX^{t,x}(s);r\les s\les T\}$ is the unique solution of the linear SDE:
      \begin{align}
      \nn D_rX^{t,x}(s)&=\si(r,X^{t,x}(r))+\int_r^sb_x(\t,X^{t,x}(\t))D_rX^{t,x}(\t)d\t\\
      \label{mallivan-SDE}  &\hp{=} +\sum_{i=1}^d\int_r^s\si^i_x(\t,X^{t,x}(\t))D_rX^{t,x}(\t)dW^i(\t),
      \end{align}
      where  $\si^i$ denotes the $i$-th column of the matrix $\si$.
\end{enumerate}

\end{lemma}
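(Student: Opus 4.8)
The plan is to combine a Picard-iteration scheme for SDE \rf{SDET} with the closedness of the Malliavin derivative operator $D$ on $\dbD_{1,2}$. Throughout I use that by \ref{ass:F.1} the maps $b(s,\cd),\si(s,\cd)$ are of class $C^3_b$, so in particular $b_x,b_{xx},\si_x,\si_{xx}$ are bounded. First I would introduce the iterates $X_0(s)\equiv x$ and, for $n\ges0$ and $s\in[t,T]$,
\[
X_{n+1}(s)=x+\int_t^sb(r,X_n(r))\,dr+\int_t^s\si(r,X_n(r))\,dW(r),
\]
with $X_n(s)\deq x$ for $s<t$. Since $b(s,\cd),\si(s,\cd)\in C^1_b$, an induction on $n$ based on the chain rule for the Malliavin derivative (\cite{Nualart 1995}) gives $X_n(s)\in(\dbD_{1,2})^d$ for every $s$. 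Carrying through the induction the additional fact that $X_n(s)$ depends on $W$ only via the increments $\{W(u)-W(t):t\les u\les s\}$, one obtains $D_rX_n(s)=0$ for $r\notin(t,s]$ (for $r>s$ this also follows from $\dbF$-adaptedness and the implication recalled just before the lemma), while for $t<r\les s$ the matrix-valued process $U^n_r(s)\deq D_rX_n(s)$ solves the linear equation
\[
U^{n+1}_r(s)=\si(r,X_n(r))+\int_r^sb_x(\t,X_n(\t))U^n_r(\t)\,d\t+\sum_{i=1}^d\int_r^s\si^i_x(\t,X_n(\t))U^n_r(\t)\,dW^i(\t).
\]

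Next I would establish the uniform-in-$n$ estimates needed to pass to the limit. The standard $L^2$-bounds for SDEs give $\sup_n\dbE\big[\sup_{t\les s\les T}|X_n(s)|^2\big]\les C(1+|x|^2)$ and the factorial estimate $\dbE\big[\sup_{t\les s\les T}|X_{n+1}(s)-X_n(s)|^2\big]\les(C(T-t))^n/n!$, so $X_n(s)\to X^{t,x}(s)$ in $L^2(\Om;\dbR^d)$ uniformly in $s$. For the derivatives, since $|b_x|,|\si_x|$ are bounded, Burkholder--Davis--Gundy and Gronwall applied to the linear equation above yield $\sup_n\dbE\big[\sup_{r\les s\les T}|U^n_r(s)|^2\big]\les C\,\dbE|\si(r,X_n(r))|^2\les C(1+|x|^2)$, and integrating in $r\in[0,T]$ shows $\sup_n\sup_{s}\|X_n(s)\|_{\dbD_{1,2}}<\i$. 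A contraction-type estimate for $U^{n+1}_r(s)-U^n_r(s)$ --- whose driving terms are $\si(r,X_n(r))-\si(r,X_{n-1}(r))$, the increments of $b_x$ and $\si_x$ along $X_n$ versus $X_{n-1}$ (controlled by the boundedness of $b_{xx},\si_{xx}$), and the terms $b_x(\cd)(U^n_r-U^{n-1}_r)$, $\si^i_x(\cd)(U^n_r-U^{n-1}_r)$ --- combined with the uniform bound on $U^n$ just obtained, shows that $\{U^n_\cd(\cd)\}_n$ is Cauchy, hence convergent, in $L^2([0,T]\times\Om;\dbR^{d\times d})$ for each fixed $s$; its limit $U_\cd(\cd)$ is the unique solution of \rf{mallivan-SDE}, extended by $0$ off $(t,s]$.

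Now I would conclude. The closedness of $D$ on $\dbD_{1,2}$, together with $X_n(s)\to X^{t,x}(s)$ in $L^2(\Om)$ and $DX_n(s)\to U_\cd(s)$ in $L^2([0,T]\times\Om)$, forces $X^{t,x}(s)\in(\dbD_{1,2})^d$ with $D_rX^{t,x}(s)=U_r(s)$ for a.e.\ $r$, which gives (i) and, by uniqueness for the linear SDE \rf{mallivan-SDE}, (ii). A continuous version of $(r,s)\mapsto D_rX^{t,x}(s)$ on $\{(r,s):t\les r\les s\les T\}$ is then obtained from Kolmogorov's criterion applied to \rf{mallivan-SDE}, exactly as in \autoref{lmm:well-posedness-SDE}; the assertion for $r>s$ is immediate from the $\cF^t_s$-measurability of $X^{t,x}(s)$ (see the Remark following \ref{ass:F.1}) together with the implication recalled just before the lemma.

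I expect the main obstacle to be the Cauchy estimate for the Malliavin derivatives of the iterates: in contrast with the plain Picard estimate, the recursion for $U^{n+1}_r(s)-U^n_r(s)$ does not close on itself but couples to $\sup_{s}|X_n(s)-X_{n-1}(s)|$ and to products such as $(b_x(\cd,X_n)-b_x(\cd,X_{n-1}))U^n_r$, so one must bootstrap with the uniform $L^2$-bound on $U^n$ from the previous step, run Gronwall in $s$, then integrate in $r$ and sum a convergent series. The remaining ingredients --- the moment bounds, the chain rule for $D$, and the closedness of $D$ on $\dbD_{1,2}$ --- are routine.
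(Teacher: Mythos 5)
The paper itself gives no proof of this lemma: it simply cites \cite[Lemma 1.1]{Pardoux--Peng 1992}, which in turn rests on the classical result on Malliavin differentiability of SDE solutions (Nualart, Theorem 2.2.1). Your Picard-iteration argument is essentially that classical proof, so in substance you are reconstructing the omitted reference rather than taking a different route, and the overall structure (differentiate the iterates, get uniform bounds, pass to the limit by closedness of $D$, identify the limit equation, and handle $r\notin(t,s]$ by adaptedness) is sound. One technical point does not close as written: in the Cauchy estimate for $U^{n+1}_r-U^n_r$ the cross terms such as $(b_x(\cdot,X_n)-b_x(\cdot,X_{n-1}))U^n_r$ require $\dbE\big[|X_n-X_{n-1}|^2|U^n_r|^2\big]$, and the uniform $L^2$ bound on $U^n_r$ that you derive is not enough -- by Cauchy--Schwarz you need a uniform $L^4$ (or $L^p$, $p>2$) bound on $U^n_r$ together with the $L^4$ factorial estimate for $X_n-X_{n-1}$. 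Both are available under \ref{ass:F.1} by the same BDG--Gronwall arguments, so this is a repairable bookkeeping gap rather than a flaw, but it should be stated. The standard texts sidestep this entirely by invoking the weak-compactness criterion (Nualart, Lemma 1.2.3): $X_n(s)\to X^{t,x}(s)$ in $L^2(\Om)$ plus $\sup_n\dbE\int_0^T|D_rX_n(s)|^2dr<\i$ already yields $X^{t,x}(s)\in(\dbD_{1,2})^d$ with $DX_n(s)\rightharpoonup DX^{t,x}(s)$ weakly, after which the equation \rf{mallivan-SDE} is identified by differentiating the limit equation; that route avoids the strong Cauchy estimate and is the cheaper option if you want a fully rigorous writeup.
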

\begin{remark}\label{remark-sde}\rm
By the uniqueness of the solution to SDE \rf{mallivan-SDE}, combining \rf{rep-diff-SDE} and \rf{mallivan-SDE}, we have
\bel{diff-malli-sde}
 D_rX^{t,x}(s)=\nabla X^{t,x}(s)(\nabla X^{t,x}(r))^{-1}\si(r,X^{t,x}(r)),\q 0\leq t\leq r\leq s\leq T.
\ee
\end{remark}
Now, for any $t\in[0,T)$, let us consider the following BSDE:
\bel{BSDE}
 Y(s)=\xi + \int_s^T g(r,Y(r),Z(r))dr-\int_s^T Z(r)dW(r),\q s\in[t,T].
\ee
We first introduce the following hypothesis.
\begin{taggedassumption}{(B.0)}\label{B0}
\rm For any $p\geq 2$ and $t\in[0,T)$, let the generator $g:[t,T]\times \dbR^m\times\dbR^{m\times d}\times\Om\to\dbR^m$ be $\cB([t,T]\times \dbR^m\times\dbR^{m\times d})\otimes\cF_T$-measurable
such that $s\mapsto g(s,y,z)$ is $\dbF$-progressively measurable for all $(y,z)\in \dbR^m\times\dbR^{m\times d}$ and
\bel{B0-f(s,0,0)}
\dbE\lt(\int_t^T|g(s,0,0)|ds\rt)^p<\i.
\ee
Moreover, there is a constant $L>0$ such that
\bel{|f-f|}
|g(s,y_1,z_1)-g(s,y_2,z_2)|\les L\big[|y_1-y_2|+|z_1-z_2|\big],~(s,y_i,z_i)\in[t,T]\times\dbR^m\times\dbR^{m\times d},~i=1,2.
\ee
\end{taggedassumption}
Under \autoref{B0}, we have the following result.
\begin{lemma}\label{lmm:BSDE}
Let $t\in[0,T)$ and {\rm \ref{B0}} hold. Then for any $\xi\in L_{\cF_T}^p(\Om;\dbR^m)$,
BSDE \rf{BSDE} admits a unique adapted solution
$(Y(\cd),Z(\cd))\in L_{\dbF}^p(\Om;C([t,T];\dbR^m))\times L^p_{\dbF}(t,T;\dbR^{m\times d})$ and there is a constant $C_p>0$ such that:
\bel{BSDE-ESTIMATE}
\dbE\Big[\sup_{s\in[t,T]}|Y(s)|^p\Big]+\dbE\Big[\int_t^T|Z(s)|^2 ds\Big]^{p\over 2}\leq  C_p\dbE |\xi|^p+C_p\dbE\Big[\int_t^T|g(s,0,0)|ds\Big]^p.
\ee
In addition, if
\bel{lemma2.3-xi}
\dbE_t[|\xi|^p]<\i,
\ee
and
\bel{lemma2.3-f(s,0,0)1,2}
g(s,y,z)=g_1(s,y,z)+g_2(s,y,z),~(s,y,z)\in[0,T]\times\dbR^m\times\dbR^{m\times d},
\ee
where $g_1(\cd)$ is a deterministic function and $g_2(\cd)$ is a stochastic process satisfying
\bel{lemma2.3-f(s,0,0)2}
\int_t^T|g_1(s,0,0)|ds<\i,\q\dbE_t\Big[\int_t^T|g_2(s,0,0)|^pds\Big]<\i,
\ee
we have
\begin{align}
\nn&\sup_{s\in[t,T]}\dbE_t\Big[|Y(s)|^p\Big]+\dbE_t\Big[\int_t^T|Z(s)|^2 ds\Big]^{p\over 2}\\
\label{BSDE-ESTIMATE2}&\q\leq  C_p\dbE_t\Big[|\xi|^p+\int_t^T|g_2(s,0,0)|^p ds\Big]+C_p\Big[\int_t^T|g_1(s,0,0)|ds\Big]^p.
\end{align}
\end{lemma}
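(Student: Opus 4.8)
\ms

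\no\textbf{Proof strategy.}
The well-posedness assertion and the estimate \eqref{BSDE-ESTIMATE} are classical: for fixed $(y(\cd),z(\cd))$ in $L^p_\dbF(\Om;C([t,T];\dbR^m))\times L^p_\dbF(t,T;\dbR^{m\times d})$ one solves \eqref{BSDE} with the frozen generator $r\mapsto g(r,y(r),z(r))$ via the martingale representation theorem, and checks that the resulting solution map is a contraction, either under an exponentially weighted norm or on a short subinterval followed by patching (see \cite{Pardoux--Peng 1990,Karoui--Peng--Quenez 1997,Zhang 2017}). Since the same computation underlies the second part, let me recall how the estimate is obtained: apply It\^o's formula to $r\mapsto|Y(r)|^p$ (of class $C^2$ because $p\ges2$) on $[s,T]$, and dominate the drift via \eqref{|f-f|} by $p|Y|^{p-1}|g(\cd,0,0)|+C_{p,L}|Y|^p+\tfrac p4|Y|^{p-2}|Z|^2$, absorbing the last summand into the $\tfrac p2|Y|^{p-2}|Z|^2$ coming from the quadratic variation. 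Taking expectations at $s=t$ (the It\^o integral has zero mean) then bounds $\dbE\int_t^T|Y|^{p-2}|Z|^2dr$ by the data and $\dbE\int_t^T|Y|^pdr$; passing to $\sup_{s\in[t,T]}$ and applying the Burkholder--Davis--Gundy and Young inequalities (the latter with a small weight, so that the $Z$-integral reproduced by BDG is the one just bounded) controls $\dbE[\sup_{[t,T]}|Y|^p]$ as well; a Gronwall argument in $s\mapsto\dbE[\sup_{[s,T]}|Y|^p]$ removes the leftover $\dbE\int_t^T|Y|^pdr$; and re-running It\^o's formula for $|Y(r)|^2$ at $s=t$, inserting the bound just obtained, yields the $\dbE\Big[\int_t^T|Z|^2dr\Big]^{p/2}$-part. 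Keeping the free term in the form $\sup_{[t,T]}|Y|\cd\int_t^T|g(s,0,0)|ds$ before applying Young's inequality is what produces $\dbE(\int_t^T|g(s,0,0)|ds)^p$ in \eqref{BSDE-ESTIMATE} rather than $\dbE\int_t^T|g(s,0,0)|^pds$.

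\ms

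For \eqref{BSDE-ESTIMATE2} the plan is to rerun this a priori argument with $\dbE[\cd]$ replaced by the conditional expectation $\dbE_t$ given $\cF_t$, the Burkholder--Davis--Gundy inequality by its conditional version, and the Gronwall step by a conditional backward Gronwall inequality. Concretely, for each $s\in[t,T]$, It\^o's formula for $|Y(r)|^p$ on $[s,T]$ and the bound \eqref{|f-f|}, followed by taking $\dbE_t$, conditional BDG, and Young's inequality, give
$$
\dbE_t\Big[\sup_{r\in[s,T]}|Y(r)|^p\Big]\les C_p\,\dbE_t\Big[|\xi|^p+\Big(\int_t^T|g(r,0,0)|dr\Big)^p\Big]+C_p\int_s^T\dbE_t\Big[\sup_{u\in[r,T]}|Y(u)|^p\Big]dr .
$$
Since $s\mapsto\dbE_t[\sup_{r\in[s,T]}|Y(r)|^p]$ admits a nonincreasing version that is a.s.\ finite (finiteness by the first part), the backward Gronwall inequality applied $\om$-by-$\om$ gives $\dbE_t[\sup_{[t,T]}|Y|^p]\les C_p\dbE_t[|\xi|^p+(\int_t^T|g(r,0,0)|dr)^p]$. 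Applying It\^o's formula to $|Y(r)|^2$ on $[t,T]$, taking $\dbE_t$, and bootstrapping with this bound to absorb the $\dbE_t[\sup|Y|^p]$ reproduced by conditional BDG and Young, gives the corresponding bound for $\dbE_t\Big[\int_t^T|Z(r)|^2dr\Big]^{p/2}$. Every stochastic integral that occurs --- $u\mapsto\int_t^u|Y|^{p-2}\langle Y,ZdW\rangle$, $u\mapsto\int_t^u\langle Y,ZdW\rangle$, and their time shifts --- is a true martingale, because $\dbE[\sup_{[t,T]}|Y|^p]<\i$ and $\dbE(\int_t^T|Z|^2dr)^{p/2}<\i$ (as the first part's argument shows) and $p\ges2$; hence their $\dbE_t$-conditional expectations behave exactly as in the unconditional case, and \eqref{lemma2.3-xi} makes the right-hand sides finite.

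\ms

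It remains to rewrite the free-term contribution $\dbE_t[(\int_t^T|g(r,0,0)|dr)^p]$ via the splitting \eqref{lemma2.3-f(s,0,0)1,2}. From $|g(r,0,0)|\les|g_1(r,0,0)|+|g_2(r,0,0)|$ one gets $(\int_t^T|g(r,0,0)|dr)^p\les 2^{p-1}\big[(\int_t^T|g_1(r,0,0)|dr)^p+(\int_t^T|g_2(r,0,0)|dr)^p\big]$; the first term is a finite \emph{deterministic} constant by \eqref{lemma2.3-f(s,0,0)2}, hence unaffected by $\dbE_t$, while for the second the conditional H\"older (Jensen) inequality in $r$ gives $\dbE_t[(\int_t^T|g_2(r,0,0)|dr)^p]\les(T-t)^{p-1}\dbE_t[\int_t^T|g_2(r,0,0)|^pdr]$, again finite by \eqref{lemma2.3-f(s,0,0)2}. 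Inserting this into the two conditional bounds above and using $\sup_{s\in[t,T]}\dbE_t[|Y(s)|^p]\les\dbE_t[\sup_{s\in[t,T]}|Y(s)|^p]$ yields exactly \eqref{BSDE-ESTIMATE2}. This also explains why the hypothesis on $g_2(\cd,0,0)$ is posed in the stronger $L^p$-in-time form \eqref{lemma2.3-f(s,0,0)2}: in general $\dbE_t[(\int_t^T|g_2(r,0,0)|dr)^p]$ cannot be controlled by $\dbE_t[\int_t^T|g_2(r,0,0)|dr]^p$.

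\ms

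The step I expect to be the main obstacle is the absorption in the a priori estimate: ensuring that the $\sup_{[s,T]}|Y|^p$ and $\int|Z|^2$ quantities reappearing on the right-hand side after Burkholder--Davis--Gundy and Young carry constants small enough to be moved back to the left. As in the unconditional case this is arranged by three devices: (i) differentiating $|Y|^p$ rather than $|Y|^2$, so that the martingale term has zero (conditional) mean and the reproduced $Z$-integral is precisely the one already on the left; (ii) estimating $\dbE_t[\sup|Y|^p]$ first and only then bootstrapping to the $Z$-estimate; and (iii) using a backward Gronwall inequality --- now with an $\cF_t$-measurable ``data'' term --- to eliminate $\dbE_t\int_t^T|Y|^pdr$ without having to subdivide $[t,T]$ and re-patch under $\dbE_t$. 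The only genuinely new point, relative to the standard BSDE estimate, is checking that every stochastic integral in play is a true martingale, so that conditioning on $\cF_t$ is harmless --- which is immediate from the $L^p$-bounds established in the first part.
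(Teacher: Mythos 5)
Your proposal is correct and follows essentially the same route as the paper: the first part is the standard BSDE well-posedness result (the paper simply cites Yong--Zhou), and the conditional estimate \eqref{BSDE-ESTIMATE2} is obtained by applying It\^o's formula to $|Y(\cd)|^p$, taking $\dbE_t$, absorbing terms via Young's and H\"older's inequalities, splitting $g=g_1+g_2$ exactly as you do, and concluding with Gr\"onwall. The only (harmless) difference is that you bound the stronger quantity $\dbE_t\big[\sup_{s}|Y(s)|^p\big]$ via a conditional Burkholder--Davis--Gundy inequality, whereas the paper works directly with $\sup_s\dbE_t\big[|Y(s)|^p\big]$, for which the stochastic-integral term has zero conditional mean and no BDG step is needed.
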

\begin{proof}
The existence and uniqueness of the adapted solution to \rf{BSDE} and the estimate \rf{BSDE-ESTIMATE}
can be found in \cite{Yong--Zhou 1999}. Applying It\^{o}'s formula to $s\mapsto |Y(s)|^p$, we have
\begin{align}
\nn&\dbE_t[|Y(s)|^p]+{p(p-1)\over2}\dbE_t\Big[\int^T_s|Y(r)|^{p-2}|Z(r)|^2dr\Big]\\
\label{lamma-bsde-ito}&\q\leq \dbE_t[|\xi|^p]+p\dbE_t\Big[\int^T_s|Y(r)|^{p-1}|g(r,Y(r),Z(r))|dr\Big].
\end{align}
Under \rf{lemma2.3-xi}--\rf{lemma2.3-f(s,0,0)1,2}--\rf{lemma2.3-f(s,0,0)2}, by Young inequality and H\"older inequality, we have
\begin{align}
\nn&\sup_{s\in[t,T]}\dbE_t[|Y(s)|^p]+{p(p-1)\over2}\dbE_t\Big[\int^T_s|Y(r)|^{p-2}|Z(r)|^2dr\Big]\\
\nn&~\leq \dbE_t[|\xi|^p]+C_p\dbE_t\Big[\int^T_t\big(|Y(r)|^{p-1}|g_1(r,0,0)|+|Y(r)|^{p-1}|g_2(r,0,0)|\\
\nn&\qq\qq\qq\qq\qq+|Y(r)|^p+|Y(r)|^{p-1}|Z(r)|\big)dr\Big] \\
\nn&~\leq \dbE_t[|\xi|^p]+{p(p-1)\over4}\dbE_t\Big[\int^T_t|Y(r)|^{p-2}|Z(r)|^2dr\Big]+C_p\dbE_t\Big[\int_t^T|Y(r)|^pdr\Big]
\\
\label{lamma-bsde-gornwall}&\qq+C_p\dbE_t\Big[\int_t^T|g_2(r,0,0)|^pdr\Big]+{1\over2}\sup_{s\in[t,T]}\dbE_t[|Y(s)|^p]+C_p\Big[\int_t^T|g_1(r,0,0)|dr\Big]^p.
\end{align}
By \rf{lamma-bsde-gornwall} and Gr\"{o}nwall's inequality, we have the estimate \rf{BSDE-ESTIMATE2}.
\end{proof}

\section{Well-posedness}\label{Sec:Well-posedness}
In this section, we will establish well-posedness of the adapted solution to EBSVIE \rf{ebsvie-I}.
We first adopt the following assumption [which is comparable with  {\rm \ref{B0}}]:
\begin{taggedassumption}{(B.1)}\label{ass:B1}\rm
Let the generator $g:\D[0,T]\times \dbR^m\times\dbR^m\times\dbR^{m\times d}\times\Om\to\dbR^m$ be $\cB(\D[0,T]\times \dbR^m\times\dbR^m\times\dbR^{m\times d})\bigotimes\cF_T$-measurable
such that $s\mapsto g(t,s,y,y^\prime,z)$ is $\dbF$-progressively measurable for all $(t,y,y^\prime,z)\in [0,T)\times\dbR^m\times\dbR^m\times\dbR^{m\times d}$ and
\bel{B1-g(t,s)}
\sup_{t\in[0,T]}\dbE\lt(\int_t^T|g(t,s,0,0,0)|ds\rt)^p<\i,
\ee
where $p\geq 2$ is a constant.
Moreover, there is a constant $L>0$ such that
\begin{align}
\nn|g(t,s,y_1,y'_1,z_1)-g(t,s,y_2, y'_2, z_2)|\leq L(|y_1- y_2|+|y'_1- y'_2|+|z_1- z_2|),\\
\label{B1-L}\forall (t,s)\in\D[0,T],~(y_1,y'_1,z_1),~( y_2, y'_2, z_2)\in\dbR^m\times\dbR^m\times\dbR^{m\times d}.
\end{align}
\end{taggedassumption}
We now give the following well-posedness result for EBSVIE \rf{ebsvie-I}.

\begin{theorem}\label{thm-wellposedness}
Let $p\geq 2$ and {\rm \ref{ass:B1}} hold. Then for any $\psi(\cd)\in L^\i(0,T; L_{\cF_T}^p(\Om;\dbR^m))$, EBSVIE \rf{ebsvie-I} admits a unique adapted solution $(Y(\cd,\cd),Z(\cd,\cd))\in\cH^p[0,T]$,
and the following estimate holds:
\begin{align}
\nn& \sup_{t\in[0,T]}\dbE\Big[\sup_{s\in[t,T]}|Y(t,s)|^p\Big]+\sup_{t\in[0,T]}\dbE\Big[\int_t^T|Z(t,s)|^2 ds\Big]^{p\over 2}\\
\label{EBSVIE-estimate}&\q \leq  C_p\sup_{t\in[0,T]}\dbE |\psi(t)|^p+C_p\sup_{t\in[0,T]}\dbE\Big[\int_t^T|g(t,s,0,0,0)|ds\Big]^p.
\end{align}
For $i=1,2$, let $g_i(\cd)$ satisfy {\rm \ref{ass:B1}}, $\psi_i(\cd)\in L^\i(0,T; L_{\cF_T}^p(\Om;\dbR^m))$,
and $(Y_i(\cd,\cd),Z_i(\cd,\cd))\in\cH^p[0,T]$ be the unique adapted solution of EBSVIE \rf{ebsvie-I} corresponding to $g_i(\cd)$, $\psi_i(\cd)$, respectively, then
\begin{align}
\nn& \sup_{t\in[0,T]}\dbE\Big[\sup_{s\in[t,T]}|Y_1(t,s)-Y_2(t,s)|^p\Big]+\sup_{t\in[0,T]}\dbE\Big[\int_t^T|Z_1(t,s)-Z_2(t,s)|^2 ds\Big]^{p\over 2}\\
\nn& \q\leq  C_p\bigg\{\sup_{t\in[0,T]}\dbE\Big[\int_t^T\big|g_1(t,s,Y_1(t,s),Y_1(s,s),Z_1(t,s))-g_2(t,s,Y_1(t,s),Y_1(s,s),Z_1(t,s))\big|ds\Big]^p \\
\label{EBSVIE-sta-estimate}&\qq\qq+\sup_{t\in[0,T]}\dbE|\psi_1(t)-\psi_2(t)|^p\bigg\}.
\end{align}
\end{theorem}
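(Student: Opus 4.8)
\ms
\no The plan is to turn EBSVIE \rf{ebsvie-I} into a $t$-parametrized family of BSDEs by freezing the diagonal process $r\mapsto Y(r,r)$ inside the generator, and then to absorb the residual Volterra-type coupling through that diagonal by a Gr\"onwall argument in the parameter $t$. For an $\dbF$-progressively measurable $\bar y(\cd)$ with $\sup_{r}\dbE|\bar y(r)|^p<\i$ and fixed $t\in[0,T)$, the equation
\begin{equation*}
\bar Y(t,s)=\psi(t)+\int_s^T g\big(t,r,\bar Y(t,r),\bar y(r),\bar Z(t,r)\big)dr-\int_s^T \bar Z(t,r)dW(r),\q s\in[t,T],
\end{equation*}
is a BSDE whose generator $(r,y,z)\mapsto g(t,r,y,\bar y(r),z)$ is $L$-Lipschitz in $(y,z)$ and whose value $g(t,r,0,\bar y(r),0)$ at $(0,0)$ has finite $L^p(\Om;L^1(t,T))$-norm by \rf{B1-g(t,s)} and the bound on $\bar y$; hence it satisfies {\rm\ref{B0}}, so \autoref{lmm:BSDE} gives a unique adapted solution obeying \rf{BSDE-ESTIMATE}. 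As is standard for parametrized BSDEs, the solution can be chosen jointly measurable in $(t,s,\om)$, so that $r\mapsto\bar Y(r,r)$ is again $\dbF$-progressively measurable; this is the sole measurability point and it is routine.

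\ms
\no For existence I would run a Picard scheme: put $(Y^0,Z^0)\equiv(0,0)$ and define $(Y^{n+1},Z^{n+1})$ by solving, for each $t$, the above BSDE with $\bar y(r)=Y^n(r,r)$. Subtracting consecutive equations, the slice $\d^nY(t,\cd):=(Y^{n+1}-Y^n)(t,\cd)$ solves a BSDE with $L$-Lipschitz generator whose value at $(0,0)$ is bounded by $L|\d^{n-1}Y(r,r)|$, so \rf{BSDE-ESTIMATE} and H\"older's inequality give, with $\p_n(t):=\dbE[\sup_{s\in[t,T]}|\d^nY(t,s)|^p]$,
\begin{equation*}
\p_n(t)\les C_p\int_t^T\p_{n-1}(r)\,dr,\q t\in[0,T],
\end{equation*}
and similarly for $\dbE[\int_t^T|\d^nZ(t,s)|^2ds]^{p/2}$. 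Since $\sup_t\p_0(t)<\i$ by \rf{BSDE-ESTIMATE}, iterating this Volterra inequality yields $\sup_t\p_n(t)\les \frac{(C_pT)^n}{n!}\,\sup_t\p_0(t)$, whence $\sum_n\big(\sup_t\p_n(t)\big)^{1/p}<\i$ and $(Y^n,Z^n)$ is Cauchy in $\cH^p[0,T]$. Passing to the limit in the equations --- using the Lipschitz continuity of $g$ and the fact that $\sup_t\dbE[\sup_s|Y^n(t,s)-Y(t,s)|^p]\to0$ controls $\sup_r\dbE|Y^n(r,r)-Y(r,r)|^p\to0$ --- exhibits the limit $(Y,Z)$ as an adapted solution of \rf{ebsvie-I}.

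\ms
\no The estimate \rf{EBSVIE-estimate} comes from applying \rf{BSDE-ESTIMATE} to the $t$-slice of $(Y,Z)$: using $|g(t,s,0,Y(s,s),0)|\les|g(t,s,0,0,0)|+L|Y(s,s)|$ and H\"older one obtains, for $\F(t):=\dbE[\sup_{s\in[t,T]}|Y(t,s)|^p]$,
\begin{equation*}
\F(t)\les C_p\dbE|\psi(t)|^p+C_p\dbE\Big(\int_t^T|g(t,s,0,0,0)|ds\Big)^p+C_p\int_t^T\F(r)\,dr,
\end{equation*}
and backward Gr\"onwall then bounds $\sup_t\F(t)$; feeding this back into \rf{BSDE-ESTIMATE} controls the $Z$-term. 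For the stability estimate \rf{EBSVIE-sta-estimate}, set $\d Y=Y_1-Y_2$, $\d Z=Z_1-Z_2$, $\d\psi=\psi_1-\psi_2$ and insert $\pm\,g_2(t,r,Y_1(t,r),Y_1(r,r),Z_1(t,r))$ into the generator difference; this presents $\d Y(t,\cd)$ as the solution of a BSDE with $L$-Lipschitz generator whose value at $(0,0)$ is dominated by $\b_0(t,r)+L|\d Y(r,r)|$, where $\b_0(t,r)=|g_1(t,r,Y_1(t,r),Y_1(r,r),Z_1(t,r))-g_2(t,r,Y_1(t,r),Y_1(r,r),Z_1(t,r))|$ is exactly the integrand on the right of \rf{EBSVIE-sta-estimate}. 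Then \rf{BSDE-ESTIMATE}, H\"older on the diagonal term, and backward Gr\"onwall in $t$ give \rf{EBSVIE-sta-estimate}; taking $g_1=g_2$ and $\psi_1=\psi_2$ there forces $\d Y\equiv0$ and $\d Z\equiv0$, i.e., uniqueness. Note that no smallness restriction on $T$ is needed: \autoref{lmm:BSDE} already absorbs, for each fixed $t$, the $L$-Lipschitz dependence of the generator on $Y(t,r)$ and $Z(t,r)$, leaving only the \emph{linear} Volterra dependence on the diagonal, which Gr\"onwall disposes of.

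\ms
\no The step I expect to be the main obstacle --- though minor --- is justifying that the constructed solution is jointly measurable in its arguments, which is what makes the diagonal $r\mapsto Y(r,r)$ a legitimate $\dbF$-progressively measurable process to re-insert into the generator; once that is in hand, everything reduces to bookkeeping with the BSDE estimates of \autoref{lmm:BSDE} and Gr\"onwall's inequality.
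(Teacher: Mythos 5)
Your proposal is correct, and it rests on the same structural idea as the paper's proof: freeze the diagonal $r\mapsto Y(r,r)$ to reduce \rf{ebsvie-I} to a $t$-parametrized family of BSDEs handled by \autoref{lmm:BSDE}, then control the Volterra-type feedback through the diagonal by a backward Gr\"onwall argument; your derivations of \rf{EBSVIE-estimate} and \rf{EBSVIE-sta-estimate}, and the deduction of uniqueness from the latter, match the paper's almost verbatim. Where you genuinely diverge is in the existence mechanism. The paper makes the frozen-diagonal map $\Th$ a strict contraction on $\cH^p[S,T]$ only for $T-S$ small, and then must patch the solution together geometrically: first the triangle $\D[S,T]$, then the rectangle $[0,S]\times[S,T]$ (where the diagonal is already known, so only a family of BSDEs remains), then backward induction on $[0,S]$ with terminal datum $Y(\cd,S)$. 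You instead run a single global Picard iteration on all of $[0,T]$ and exploit the fact that the diagonal enters the slice-BSDE only through its driver at zero, linearly and in Volterra form, so that $\p_n(t)\les C_p\int_t^T\p_{n-1}(r)\,dr$ yields the factorial bound $\sup_t\p_n(t)\les (C_pT)^n(n!)^{-1}\sup_t\p_0(t)$ and hence a Cauchy sequence without any smallness restriction or domain decomposition. This is cleaner and shorter; what the paper's stepwise construction buys in exchange is that at each stage the diagonal being re-inserted is the diagonal of an already-constructed exact solution (so its progressive measurability is settled once per step), whereas your scheme needs joint measurability of $(t,s,\om)\mapsto Y^n(t,s)$ at every iteration --- a point you correctly flag as the only delicate spot, and one the paper itself treats no more carefully than you do.
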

\begin{proof}
We first prove that EBSVIE \rf{ebsvie-I} admits a unique adapted solution. The proof will be divided into three steps.
\ms

{\it Step 1:  Existence and uniqueness of the adapted solution to EBSVIE \rf{ebsvie-I} on $[S,T]$ for some $S\in[0,T)$.}

\ms
For any $(y(\cd,\cd),z(\cd,\cd))\in\cH^p[S,T]$, consider the following EBSVIE:
\bel{EBSVIE-BSDE-Parameter}
Y(t,s)=\psi(t) + \int_s^T g(t,r,Y(t,r),y(r,r),Z(t,r))dr - \int_s^T Z(t,r)dW(r).
\ee
The above EBSVIE can be regarded as a family of BSDEs parameterized by $t\in[S,T]$.
For any $t\in[S,T]$, by \autoref{lmm:BSDE}, the above EBSVIE (or BSDE) admits a unique adapted solution $(Y(t,\cd),Z(t,\cd))\in L_{\dbF}^p(\Om;C(t,T;\dbR^m))\times L^p_{\dbF}(t,T;\dbR^{m\times d})$ satisfying
\begin{equation*}
\dbE\Big[\sup_{s\in[t,T]}|Y(t,s)|^p\Big]+\dbE\Big[\int_t^T|Z(t,s)|^2 ds\Big]^{p\over 2}\leq  C_p\dbE |\psi(t)|^p+C_p\dbE\Big[\int_t^T|g(t,s,0,y(s,s),0)|ds\Big]^p,\\
\end{equation*}
which implies
\begin{align}
\nn& \sup_{t\in[S,T]}\dbE\Big[\sup_{s\in[t,T]}|Y(t,s)|^p\Big]+\sup_{t\in[S,T]}\dbE\Big[\int_t^T|Z(t,s)|^2 ds\Big]^{p\over 2}\\
\label{well-map}&\q \leq  C_p\sup_{t\in[S,T]}\dbE |\psi(t)|^p+C_p\sup_{t\in[S,T]}\dbE\Big[\int_t^T|g(t,s,0,y(s,s),0)|ds\Big]^p<\i.
\end{align}
Thus, we can define a map $\Th:\cH^p[S,T]\to\cH^p[S,T]$ by
\bel{Th}\Th(y(\cd,\cd),z(\cd,\cd))=(Y(\cd,\cd),Z(\cd,\cd)),\q (y(\cd,\cd),z(\cd,\cd))\in\cH^p[S,T].\ee
We claim that the map $\Th(\cd,\cd)$ is a contraction when $T-S>0$ is small.
To prove this, let $(\bar y(\cd,\cd),\bar z(\cd,\cd))\in\cH^p[S,T]$, and $(\bar Y(\cd,\cd),\bar Z(\cd,\cd))= \Th(\bar y(\cd,\cd),\bar z(\cd,\cd))$.
By the estimate \rf{BSDE-ESTIMATE} in \autoref{lmm:BSDE} and H\"{o}lder's inequality, we have
\begin{align}
\nn &\sup_{t\in[S,T]}\dbE\Big[\sup_{s\in[t,T]}|Y(t,s)-\bar Y(t,s)|^p\Big]+\sup_{t\in[S,T]}\dbE\Big[\int_t^T|Z(t,s)-\bar Z(t,s)|^2ds\Big]^{p\over 2}\\
\nn &\q \leq C_p\sup_{t\in[S,T]}\dbE \Big[\int_t^T|g(t,s,Y(t,s),y(s,s),Z(t,s))-g(t,s,Y(t,s),\bar y(s,s),Z(t,s))|ds\Big]^p\\
\nn &\q \leq C_p(T-S)^{p}\sup_{t\in[S,T]}\dbE|y(t,t)-\bar y(t,t)|^p\\
\label{CONTRAT} &\q \leq C_p(T-S)^{p}\sup_{t\in[S,T]}\dbE\lt[\sup_{t\leq s\leq T}|y(t,s)-\bar y(t,s)|^p\rt].
\end{align}
By \rf{CONTRAT}, when $T-S>0$ is small enough, the map $\Th(\cd,\cd)$ is a contraction on the set $\cH^p[S,T]$.
Hence, EBSVIE \rf{ebsvie-I} admits a unique adapted solution on  $[S,T]$.
Note that the choice of $T-S$ is independent of $\psi(\cd)$.

\ms
{\it Step 2: A family of BSDEs is solvable on $[S,T]$.}
\ms

We have seen that the value $(Y(t,s),Z(t,s));S\leq t\leq s\leq T$ is already determined, the region marked $\textcircled{\small 1}$ in the following figure.
Note that for any $t\in[S,T]$, $Y(t,s);t\leq s\leq T$ is continuous.
Thus, $Y(t,t);S\leq t\leq T$ is well-defined, the red line between the region marked $\textcircled{\small 1}$ and $\textcircled{\small 3}$.

\ms

\vskip-1cm

\setlength{\unitlength}{.01in}
~~~~~~~~~~~~~~~~~~~~~~~~~~~~~~~~~~~~~~~~~~~~~~~\begin{picture}(230,240)
\put(0,0){\vector(1,0){170}}
\put(0,0){\vector(0,1){170}}
\put(110,0){\line(0,1){150}}
\put(150,0){\line(0,1){150}}
\put(0,110){\line(1,0){150}}
\put(0,150){\line(1,0){150}}
\thicklines
\put(0,0){\color{red}\line(1,1){150}}
\put(122,137){\makebox(0,0){$\textcircled{\small 1}$}}
\put(55,130){\makebox(0,0){$\textcircled{\small 2}$}}
\put(135,120){\makebox(0,0){$\textcircled{\small 3}$}}
\put(130,55){\makebox(0,0){$\textcircled{\small 4}$}}
\put(-10,150){\makebox(0,0)[b]{$\scriptstyle T$}}
\put(150,-12){\makebox(0,0)[b]{$\scriptstyle T$}}
\put(-15,105){\makebox(0,0)[b]{$\scriptstyle S$}}
\put(105,-12){\makebox(0,0)[b]{$\scriptstyle S$}}
\put(180,-5){\makebox{$t$}}
\put(0,180){\makebox{$s$}}
\put(35,80){\makebox(0,0){$\scriptstyle\D[0,S]$}}
\put(75,35){\makebox(0,0){$\scriptstyle\D^*[0,S]$}}
\end{picture}
\bs

\centerline{(Figure 1)}

\bs

Hence, the following can be defined:
$$g^S(t,s,y,z)=g(t,s,y,Y(s,s),z),\q (t,s,y,z)\in[0,S]\times[S,T]\times\dbR^m\times\dbR^{m\times d}.$$
Consider the following BSDEs parameterized by $t\in[0,S]$:
\bel{SFIE-BSDE-Parameter}
Y(t,s)=\psi(t) + \int_s^T g^S(t,r,Y(t,r),Z(t,r))dr - \int_s^T Z(t,r)dW(r),\q s\in[S,T].
\ee
For all $t\in[0,S]$, by \autoref{lmm:BSDE}, the above BSDE admits a unique solution $(Y(t,s),Z(t,s));s\in[S,T]$,
and by the definition of $g^S(t,r,y,z)$, we see that $(Y(\cd,\cd),Z(\cd,\cd))$ satisfies
\bel{SFIE-BSVIE-BSDE}
Y(t,s)=\psi(t) + \int_s^T g(t,r,Y(t,r),Y(r,r),Z(t,r))dr - \int_s^T Z(t,r)dW(r),\q s\in[S,T].
\ee
Thus, this step uniquely determines the values $(Y(t,s),Z(t,s))$ for $(t,s)\in[0,S]\times[S,T]$, the region marked $\textcircled{\small 2}$ in the above figure.

\ms
{\it Step 3: Complete the proof by induction. }
\ms

By step 1--2, we have uniquely determined
\bel{y-z}\left\{\begin{aligned}
   \qq Y(t,t),\qq &\qq t\in[S,T],\\
   Y(t,s),Z(t,s),&\qq (t,s)\in\D[S,T]\bigcup[0,S]\times[S,T].
\end{aligned}\right.\ee
Now, we consider the following  EBSVIE on $[0,S]$:
\bel{SFIE-BSVIE}
Y(t,s)=Y(t,S) + \int_s^S g(t,r,Y(t,r),Y(r,r),Z(t,r))dr - \int_s^S Z(t,r)dW(r).
\ee
We see that the choice of $T-S$ is independent of $\psi(\cd)$. Hence the above procedure can be repeated.
Then we can use induction to finish the proof of the existence and uniqueness of adapted solution to EBSVIE \rf{ebsvie-I}.

\ms

We next prove the estimate \rf{EBSVIE-estimate}.
For the unique adapted solution $(Y(\cd,\cd),Z(\cd,\cd))\in\cH^p[0,T]$ to EBSVIE \rf{ebsvie-I}, consider the following BSVIE:
\bel{BSDE-Parameter}
\eta(t,s)=\psi(t) + \int_s^T g(t,r,\eta(t,r),Y(r,r),\zeta(t,r))dr - \int_s^T \zeta(t,r)dW(r),~(t,s)\in\D[0,T],
\ee
which is actually a family of BSDEs with parameter $t$.
For any $t\in[0,T]$, by \autoref{lmm:BSDE}, the above BSDE admits a unique solution $(\eta(t,\cd),\z(t,\cd))\in L_{\dbF}^p(\Om;C(t,T;\dbR^m))\times L^p_{\dbF}(t,T;\dbR^{m\times d})$.
By the above steps, we have
\bel{EBSVIE-BSDE}
Y(t,s)=\eta(t,s),~ Z(t,s)=\z(t,s),\q (t,s)\in\D[0,T].
\ee
By \autoref{lmm:BSDE}, there is a generic constant $C_p>0$ (which could be different from line to line) such that:
\begin{align}
\nn& \dbE\Big[\sup_{s\in[t,T]}|Y(t,s)|^p\Big]+\dbE\Big[\int_t^T|Z(t,s)|^2 ds\Big]^{p\over 2}\\
\label{ebsvie-bsde-estimate}&\q \leq  C_p\dbE |\psi(t)|^p+C_p\dbE\Big[\int_t^T|g(t,s,Y(s,s),0,0)|ds\Big]^p\\
\nn&\q \leq  C_p\dbE |\psi(t)|^p+C_p\dbE\Big[\int_t^T|g(t,s,0,0,0)|ds\Big]^p+C_p \dbE\int_t^T|Y(s,s)|^pds.
\end{align}
It follows that
\begin{align}
\nn& \dbE\Big[|Y(t,t)|^p\Big]+\dbE\Big[\int_t^T|Z(t,s)|^2 ds\Big]^{p\over 2}\\
\label{ebsvie-bsde-estimate1}&\q \leq  C_p\dbE |\psi(t)|^p+C_p\dbE\Big[\int_t^T|g(t,s,0,0,0)|ds\Big]^p+C_p \dbE\int_t^T|Y(s,s)|^pds.
\end{align}
By \rf{ebsvie-bsde-estimate1} and Gr\"{o}nwall's  inequality, we obtain
\begin{align}
\nn& \sup_{t\in[0,T]}\dbE\Big[|Y(t,t)|^p\Big]+\sup_{t\in[0,T]}\dbE\Big[\int_t^T|Z(t,s)|^2 ds\Big]^{p\over 2}\\
\label{ebsvie-bsde-estimate2}&\q \leq  C_p\sup_{t\in[0,T]}\dbE |\psi(t)|^p+C_p\sup_{t\in[0,T]}\dbE\Big[\int_t^T|g(t,s,0,0,0)|ds\Big]^p.
\end{align}
Combining this with \rf{ebsvie-bsde-estimate}, we have the estimate \rf{EBSVIE-estimate}.
Similarly, we obtain the stability estimate \rf{EBSVIE-sta-estimate}.
\end{proof}

We now would like to look some better regularity for the adapted solution of EBSVIEs under additional
conditions.  More precisely, we introduce the following assumption [comparing with \ref{ass:B1}].

\begin{taggedassumption}{(B2)}\label{B2}\rm
Let $g_1:[0,T]^2\times\dbR^m\times\dbR^m\times\dbR^{m\times d}\to\dbR^m$ be $\cB([0,T]^2\times \dbR^m\times\dbR^m\times\dbR^{m\times d})$-measurable such that
$$
\sup_{t\in[0,T]}\int_t^T|g_1(t,s,0,0,0)|ds<\i.
$$
Let $g_2:[0,T]^2\times\dbR^m\times\dbR^m\times\dbR^{m\times d}\times\Om\to\dbR^m$ be $\cB([0,T]^2\times \dbR^m\times\dbR^m\times\dbR^{m\times d})\bigotimes\cF_T$-measurable such that for every $(t,y,y',z)\in[0,T]\times\dbR^m\times\dbR^m\times\dbR^{m\times d}$, $s\mapsto g_2(t,s,y,y',z)$ is $\dbF$-progressively measurable and
$$
\dbE\bigg[\sup_{t\in[0,T]}\int_t^T|g_2(t,s,0,0,0)|^pds\bigg]<\i.
$$
Let $$g(\cd)=g_1(\cd)+g_2(\cd).$$
There exists a modulus of continuity $\rho:[0,\i)\to[0,\i)$ (a continuous and monotone increasing function with $\rho(0)=0$) such that
\begin{align*}
&|g(t_1,s,y,y',z)-g(t_2,s,y,y',z)|\les\rho(|t_1-t_2|),\\
&\qq\qq\qq \forall~t_1,t_2,s\in[0,T],~(y,y',z)\in\dbR^m\times\dbR^m\times\dbR^{m\times d}.
\end{align*}
\end{taggedassumption}

Note that in \ref{B2}, the generator $g(t,s,y,y',z)$ is defined for $(t,s)$ in the square domain $[0,T]^2$ instead of the triangle domain $\D[0,T]$.

\begin{theorem}\label{thm-y-continuity}
Let $p\geq 2$ and {\rm \ref{ass:B1}--\ref{B2}} hold.
For any $\psi(\cd)\in L^p_{\cF_T}(\Om;C^U(0,T;\dbR^m))$, let $(Y(\cd,\cd),Z(\cd,\cd))\in\cH^p[0,T]$ be the unique adapted solution to EBSVIE \rf{ebsvie-I},
then $Y(t,t);0\leq t\leq T$ is continuous and the following estimate holds:
\begin{align}
\nn\dbE\Big[\sup_{t\in[0,T]}|Y(t,t)|^p\Big]&\leq C_p\dbE\Big[\sup_{t\in[0,T]}|\psi(t)|^p+\sup_{t\in[0,T]}\int_t^T|g_2(t,s,0,0,0)|^p ds\Big]\\
\label{pro-EBSVIE-supestimate}&\q+C_p\sup_{t\in[0,T]}\Big[\int_t^T|g_1(t,s,0,0,0)| ds\Big]^p.
\end{align}
\end{theorem}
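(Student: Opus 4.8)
The plan is to use that, for each fixed $t$, the pair $(Y(t,\cd),Z(t,\cd))$ solves on $[t,T]$ the BSDE with terminal value $\psi(t)$ and generator $(s,y,z)\mapsto g(t,s,y,Y(s,s),z)$, in which the diagonal process $u(s)\deq Y(s,s)$ — which is $\dbF$-progressively measurable and satisfies $\sup_s\dbE|u(s)|^p<\i$ by \autoref{thm-wellposedness} — is treated as a datum. Since \ref{B2} furnishes $g$ on the whole square $[0,T]^2$, this BSDE is well-posed on all of $[0,T]$: for each $t$ let $(\h Y^t(\cd),\h Z^t(\cd))$ be the adapted solution, on $[0,T]$, of $\h Y^t(s)=\psi(t)+\int_s^T g(t,r,\h Y^t(r),Y(r,r),\h Z^t(r))dr-\int_s^T\h Z^t(r)dW(r)$ (it exists by \autoref{lmm:BSDE}); by uniqueness $\h Y^t(s)=Y(t,s)$ on $[t,T]$, so in particular $Y(t,t)=\h Y^t(t)$. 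I would then (a) show $t\mapsto\h Y^t$ is continuous from $[0,T]$ into $L^p_\dbF(\Om;C([0,T];\dbR^m))$, which yields the continuity of $t\mapsto Y(t,t)$, and (b) derive \rf{pro-EBSVIE-supestimate} via a conditional-expectation estimate together with Gr\"{o}nwall's and Doob's inequalities.

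For (a): given $t_1\le t_2$, the processes $\h Y^{t_1}$ and $\h Y^{t_2}$ solve BSDEs on the common interval $[0,T]$; their terminal values differ by $\psi(t_1)-\psi(t_2)$, and since $\psi(\cd)\in L^p_{\cF_T}(\Om;C^U(0,T;\dbR^m))$ there is a deterministic modulus $\rho_\psi$ with $|\psi(t_1)-\psi(t_2)|\le\rho_\psi(|t_1-t_2|)$ a.s., while their generators differ, at any point, by at most $\rho(|t_1-t_2|)$ by \ref{B2}. The standard stability estimate for Lipschitz BSDEs (a consequence of \autoref{lmm:BSDE}) then gives $\dbE[\sup_{s\in[0,T]}|\h Y^{t_1}(s)-\h Y^{t_2}(s)|^p]\le C_p(\rho_\psi(|t_1-t_2|)^p+\rho(|t_1-t_2|)^p)\to0$; hence $t\mapsto\h Y^t$ is (uniformly) continuous into $L^p_\dbF(\Om;C([0,T];\dbR^m))$. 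Composing with the evaluation map and controlling the within-path increment $\h Y^{t_1}(t_1)-\h Y^{t_1}(t_2)$ by dominated convergence gives $\dbE|Y(t_1,t_1)-Y(t_2,t_2)|^p\to0$ as $t_2\to t_1$, so $t\mapsto Y(t,t)$ is continuous in $L^p_{\cF_T}(\Om;\dbR^m)$; passing to the associated modification, $\{Y(t,t);0\le t\le T\}$ has continuous sample paths.

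For (b): set $\Psi\deq\sup_t|\psi(t)|$, $G_1\deq\sup_t\int_t^T|g_1(t,s,0,0,0)|ds$ and $G_2\deq(\sup_t\int_t^T|g_2(t,s,0,0,0)|^pds)^{1/p}$; then $\Psi,G_2\in L^p$ and $G_1<\i$ by the definition of $L^p_{\cF_T}(\Om;C^U)$ and by \ref{B2}. Starting from $Y(t,t)=\dbE_t[\psi(t)+\int_t^T g(t,s,Y(t,s),Y(s,s),Z(t,s))ds]$, the Lipschitz bound \rf{B1-L} on $g$, and the conditional a priori ($L^1$-type) estimate for the BSDE on $[t,T]$ — which bounds $\dbE_t\int_t^T(|Y(t,s)|+|Z(t,s)|)ds$ in terms of $\dbE_t[|\psi(t)|]$ and $\dbE_t[\int_t^T|g(t,s,0,Y(s,s),0)|ds]$ — one reaches, for a.e.\ $t$, an inequality of the form $|Y(t,t)|\le C(\dbE_t[\Psi+G_2]+G_1)+C\int_t^T\dbE_t[|Y(s,s)|]ds$. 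Writing this at a running time $s$, taking $\dbE_t[\cd]$, and applying Gr\"{o}nwall's inequality to $s\mapsto\dbE_t[|Y(s,s)|]$ removes the integral term and leaves $|Y(t,t)|\le C'\dbE_t[\Psi+G_2]+C'G_1$ for a.e.\ $t$. Using the continuous version from (a), $\sup_t|Y(t,t)|\le C'\sup_t\dbE_t[\Psi+G_2]+C'G_1$; raising to the $p$-th power, taking expectations, and applying Doob's $L^p$ maximal inequality (legitimate since $p\ge2>1$) to the martingale $t\mapsto\dbE_t[\Psi+G_2]$ yields $\dbE[\sup_t|Y(t,t)|^p]\le C_p\dbE[(\Psi+G_2)^p]+C_pG_1^p$, which is \rf{pro-EBSVIE-supestimate}.

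The main obstacle — the step requiring genuine care — is the order of operations in (b): one must work with the $L^1$-type (non--$p$-th-power) conditional estimates all the way through the Gr\"{o}nwall argument, obtain the pointwise bound $|Y(t,t)|\le C'\dbE_t[\Psi+G_2]+C'G_1$, and only afterwards take $\sup_t$, raise to the $p$-th power, and invoke Doob. If one raised to the $p$-th power too early, the last step would require a maximal inequality for the martingale $\dbE_t[(\Psi+G_2)^p]$, hence $\Psi+G_2\in L\log L$, which is strictly more than the hypotheses provide; steering around this is the delicate point. Two further, routine items are the measurability of $\sup_t|Y(t,t)|$ (supplied by the continuous modification from (a)) and the precise conditional $L^1$-estimate used to dispose of the off-diagonal terms $Y(t,s),Z(t,s)$, which is classical for Lipschitz generators.
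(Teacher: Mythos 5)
Your skeleton matches the paper's: freeze the diagonal, view $Y(t,\cdot)$ as a $t$-parameterized BSDE extended to all of $[0,T]$ via \ref{B2}, prove continuity in $t$ by a stability estimate, and obtain \rf{pro-EBSVIE-supestimate} from a conditional estimate, Gr\"onwall, and Doob; and you correctly spot the $L\log L$ pitfall. But there are two genuine gaps. In (a) you only establish $L^p$-continuity of $t\mapsto Y(t,t)$ and then "pass to the associated modification": $L^p$-continuity does not yield a continuous modification, and the theorem (as well as the measurability of $\sup_{t}|Y(t,t)|$ in \rf{pro-EBSVIE-supestimate}) concerns the given diagonal process. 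The paper instead extracts a \emph{pathwise} modulus: it applies the conditional estimate \rf{BSDE-ESTIMATE2} to the difference equation and then sets $r=s$, so that $\dbE_s\big[|Y(t,s)-Y(t',s)|^p\big]=|Y(t,s)-Y(t',s)|^p$ because the difference is $\cF_s$-measurable, giving $|Y(t,s)-Y(t',s)|\le C_p\,\rho(|t-t'|)$ a.s.\ uniformly in $s$; combined with the a.s.\ continuity of $s\mapsto Y(t,s)$ this yields a.s.\ joint continuity and hence continuity of the actual diagonal. Your argument needs this upgrade, which your $L^p$ bounds do not by themselves provide.

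The more serious gap is in (b): everything hinges on the first-power conditional estimate bounding $\dbE_t\int_t^T(|Y(t,s)|+|Z(t,s)|)\,ds$ by $\dbE_t[|\psi(t)|]$ and $\dbE_t[\int_t^T|g(t,s,0,Y(s,s),0)|ds]$, which you call classical. It is not supplied by \autoref{lmm:BSDE}, whose It\^o-formula proof needs $p\ge 2$ precisely to absorb $|Y|^{p-1}|Z|$ into $|Y|^{p-2}|Z|^2$, and for multidimensional Lipschitz systems it is not routine: the only elementary control of $\dbE_t[\int_t^T|Z(t,s)|ds]$ goes through conditional BDG applied to $\int_t^\cdot Z\,dW = Y(t,\cdot)-Y(t,t)+\int_t^\cdot g\,ds$, which produces $\dbE_t[\sup_u|Y(t,u)|]\le\dbE_t[\sup_u\dbE_u[\cdots]]$ --- a conditional maximal function of an $L^1$ martingale, i.e.\ exactly the $L\log L$ obstruction you set out to avoid, now in conditional form (and the Girsanov linearization that removes the $z$-dependence in the scalar case is unavailable for $m>1$). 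So the delicate point has been relocated, not resolved. The paper takes a different route here: it truncates $\psi$ and $g_i(\cdot,\cdot,0,0,0)$ at level $n$ so that all data are bounded, applies the $p$-th-power conditional estimate and Gr\"onwall to the truncated solutions, takes $\sup_t$ and invokes Doob, and then recovers the estimate for $Y(t,t)$ by showing $Y^n(t,t)\to Y(t,t)$ in $L^p(\Om;C([0,T];\dbR^m))$ via the stability estimate \rf{EBSVIE-sta-estimate} and dominated convergence. To make your version work you must either actually prove the $L^1$-type conditional estimate you invoke or adopt a truncation/approximation scheme of this kind.
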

\begin{proof}
 Without loss of generality, let us assume that
$$|\psi(t')-\psi(t)|\les\rho(|t-t^\prime|),\q \forall~t,t'\in[0,T],$$
with the same modulus of continuity $\rho(\cd)$ given in \ref{B2}.

\ms
For any $t,t'\in[0,T]$, let
\bel{def-g0}
 g^0(t,t',s)=g(t,r,Y(t,r),Y(r,r),Z(t,r))-g(t',r,Y(t,r),Y(r,r),Z(t,r)).
\ee
Then we have
\begin{align}
\nn Y(t,s)-Y(t',s)&=\psi(t)-\psi(t') - \int_s^T \big[Z(t,r)-Z(t',r)\big] dW(r)\\
 \nn             &\q+\int_s^T\big[g(t,r,Y(t,r),Y(r,r),Z(t,r))-g(t',r,Y(t',r),Y(r,r),Z(t',r))\big]dr\\
\nn               &=\psi(t)-\psi(t') - \int_s^T \big[Z(t,r)-Z(t',r)\big] dW(r)\\
 \nn             &\q+\int_s^T  \big[g(t',r,Y(t,r),Y(r,r),Z(t,r))-g(t',r,Y(t',r),Y(r,r),Z(t',r))\big]dr\\
 \nn               &\q+\int_s^Tg^0(t,t',r)dr\\
 \nn                 &=\psi(t)-\psi(t') - \int_s^T \big[Z(t,r)-Z(t',r)\big] dW(r)\\
 \nn             &\q+\int_s^T  \big[g_y(t',r) (Y(t,r)-Y(t',r))+g_z(t',r) (Z(t,r)-Z(t',r))\big]dr\\
 \label{Y(t,s)-Y(t',s)}              &\q+\int_s^Tg^0(t,t',r)dr,
\end{align}
where
\begin{align}
\nn g_y(t',r)&=\int_0^1 g_y(t',r,\lambda Y(t,r)+(1-\lambda)Y(t',r),Y(r,r),\lambda Z(t,r)+(1-\lambda)Z(t',r))d\lambda,\\
\nn g_z(t',r)&=\int_0^1 g_z(t',r,\lambda Y(t,r)+(1-\lambda)Y(t',r),Y(r,r),\lambda Z(t,r)+(1-\lambda)Z(t',r))d\lambda.
\end{align}
Thus, the above EBSVIE \rf{Y(t,s)-Y(t',s)} can be regarded as a BSDE on $[0,T]$.
Note that $\psi(t)-\psi(t')$ and $g^0(t,t',r)$ are both uniformly bounded.
For any $s\in[0,T)$, by \rf{BSDE-ESTIMATE2} in \autoref{lmm:BSDE} , we have
\begin{align}
\nn\sup_{r\in[s,T]}\dbE_s\Big[|Y(t,r)-Y(t',r)|^p\Big]&\leq  C_p\dbE_s |\psi(t)-\psi(t')|^p+C_p\dbE_s\Big[\int_s^T|g^0(t,t',r)|^pdr\Big]\\
\label{y-continuous-t}                            &\leq  C_p\big[\rho(|t-t'|)\big]^p.
\end{align}
Let $r=s$, we have
$$|Y(t,s)-Y(t',s)| \leq C_p\rho(|t-t'|),\q s\in[0,T],$$
which leads to
$$\lim_{|t-t'|\to0}\[\sup_{s\in[0,T]}|Y(t,s)-Y(t',s)|\]=0,~\as $$
On the other hand, since $Y(t,\cd)\in L_\dbF^2(\Om;C(0,T;\dbR^m))$ for any $t\in[0,T]$, one has
$$\lim_{|s-s'|\to 0}|Y(t,s)-Y(t,s')|=0,\q\forall t\in[0,T],~\as$$
It follows that $(t,s)\mapsto Y(t,s)$ is continuous, i.e.,
$$\lim_{(t',s')\to(t,s)}|Y(t',s')-Y(t,s)|=0,\q\forall(t,s)\in[0,T]^2,~\as$$
Consequently, $t\mapsto Y(t,t)$ is continuous.

\ms
Next, we prove the estimate \rf{pro-EBSVIE-supestimate}.
For $i=1,2$ and any $n\geq 0$, define
\bel{define-g0n}g_i^n(t,s,0,0,0)\deq\left\{\begin{aligned}
   g_i(t,s,0,0,0),\q &\q \hbox{if}~ |g_i(t,s,0,0,0)|\leq n,\\
           \qq 0,\qq   \q&\q \hbox{if} ~|g_i(t,s,0,0,0)|> n,
\end{aligned}\right.\ee
\begin{align}
\nn g_i^n(t,s,y,y',z)&\deq g_i(t,s,y,y',z)-g_i(t,s,0,0,0)+g_i^n(t,s,0,0,0),\\
\nn g^n(t,s,y,y',z)&\deq g^n_1(t,s,y,y',z)+g^n_2(t,s,y,y',z), \\
\label{define-gn}&\qq(t,s,y,y',z)\in[0,T]^2\times\dbR^m\times\dbR^m\times\dbR^{m\times d},
\end{align}
and
\bel{define-psin}\psi^n(t)\deq\left\{\begin{aligned}
  \psi(t),\q &\q \hbox{if}~ |\psi(t)|\leq n,\\
           \q0,\q   \q&\q \hbox{if} ~|\psi(t)|> n.
\end{aligned}\right.\ee
Note that $g^n(\cd)$ satisfies the assumptions {\rm \ref{ass:B1}} and
\bel{estimate-gn}
|g^n(t,s,0,0,0)|\leq 2n, \q|\psi^n(t)|\leq n,\q (t,s)\in [0,T]^2.
\ee
By \autoref{thm-wellposedness}, the following EBSVIE
\bel{ebsvien} Y^n(t,s)=\psi^n(t) + \int_s^T g^n(t,r,Y^n(t,r),Y^n(r,r),Z^n(t,r))dr - \int_s^T Z^n(t,r)dW(r) \ee
admits a unique adapted solution $(Y^n(\cd),Z^n(\cd))\in\cH^p[0,T]$.
Further, similar to the proof of \rf{BSDE-ESTIMATE2} in \autoref{lmm:BSDE} and \rf{EBSVIE-estimate} in \autoref{thm-wellposedness}, we have
\begin{align}
\nn \sup_{s\in[t,T]}\sup_{r\in[s,T]}\dbE_t\Big[|Y^n(s,r)|^p\Big]&\leq  C_p\sup_{s\in[t,T]}\dbE_t |\psi^n(s)|^p+C_p\sup_{s\in[t,T]}\dbE_t\Big[\int_s^T|g_2^n(s,r,0,0,0)|^p dr\Big]\\
\nn &\q +C_p\sup_{s\in[t,T]}\Big[\int_s^T|g_1^n(s,r,0,0,0)| dr\Big]^p\\
\label{EBSVIEn-estimate}&<\i.
\end{align}
Let $r=s$, $s=t$, we have
\begin{align}
\nn|Y^n(t,t)|^p &\leq  C_p\sup_{s\in[t,T]}\dbE_t |\psi^n(s)|^p+C_p\sup_{s\in[t,T]}\dbE_t\Big[\int_s^T|g_2^n(s,r,0,0,0)|^p dr\Big]\\
\nn &\q +C_p\sup_{s\in[t,T]}\Big[\int_s^T|g_1^n(s,r,0,0,0)| dr\Big]^p\\
\nn&\leq  C_p\dbE_t \Big[\sup_{s\in[t,T]}|\psi^n(s)|^p\Big]+C_p\dbE_t\Big[\sup_{s\in[t,T]}\int_s^T|g_2^n(s,r,0,0,0)|dr\Big]^p\\
\label{EBSVIEnt-Et-estimate} &\q +C_p\sup_{s\in[t,T]}\Big[\int_s^T|g_1^n(s,r,0,0,0)| dr\Big]^p.
\end{align}
By \rf{EBSVIEnt-Et-estimate}, Doob's maximal inequality, and \rf{define-g0n}--\rf{define-gn}--\rf{define-psin}, we have
\begin{align}
\nn\dbE\Big[\sup_{t\in[0,T]}|Y^n(t,t)|^p\Big] &\leq  C_p\dbE\Big\{\sup_{t\in[0,T]}\dbE_t\Big[\sup_{s\in[t,T]}|\psi^n(s)|^p\Big]+\sup_{t\in[0,T]}\dbE_t\Big[\sup_{s\in[t,T]}\int_s^T|g_2^n(s,r,0,0,0)|^pdr\Big]\Big\}\\
                                            \nn &\q +C_p\sup_{s\in[0,T]}\Big[\int_s^T|g_1^n(s,r,0,0,0)| dr\Big]^p\\
                                         \nn &\leq  C_p\dbE\Big\{\sup_{t\in[0,T]}\dbE_t
                                            \Big[\sup_{s\in[0,T]}|\psi^n(s)|^p\Big]+\sup_{t\in[0,T]}\dbE_t\Big[\sup_{s\in[0,T]}\int_s^T|g_2^n(s,r,0,0,0)|^p dr\Big]\Big\}\\
                                           \nn &\q +C_p\sup_{s\in[0,T]}\Big[\int_s^T|g_1^n(s,r,0,0,0)| dr\Big]^p\\
                                         \nn&\leq  C_p\dbE \Big[\sup_{t\in[0,T]}|\psi^n(t)|^p\Big]+C_p\dbE\Big[\sup_{t\in[0,T]}\int_t^T|g_2^n(t,s,0,0,0)|^p ds\Big]\\
                                             \nn &\q +C_p\sup_{t\in[0,T]}\Big[\int_t^T|g_1^n(t,s,0,0,0)| ds\Big]^p\\
                                     \nn &\leq  C_p\dbE \Big[\sup_{t\in[0,T]}|\psi(t)|^p\Big]+ C_p\dbE\Big[\sup_{t\in[0,T]}\int_t^T|g_2(t,s,0,0,0)|^p ds\Big]\\
\label{EBSVIEnt-E-estimate}                      &\q +C_p\sup_{t\in[0,T]}\Big[\int_t^T|g_1(t,s,0,0,0)| ds\Big]^p<\i.
\end{align}
Further, similar to the above \rf{EBSVIEnt-E-estimate}, we have
\begin{align}
\nn&\dbE\Big[\sup_{t\in[0,T]}|Y^m(t,t)-Y^n(t,t)|^p\Big] \\
\nn&\q\leq C_p\dbE\Big[\sup_{t\in[0,T]}\int_t^T|g_2^m(t,s,Y^m(t,s),Y^m(s,s),Z^m(t,s))-g_2^n(t,s,Y^m(t,s),Y^m(s,s),Z^m(t,s))|^p ds\Big]\\
\nn&\qq +C_p\Big[\sup_{t\in[0,T]}\int_t^T|g_1^m(t,s,Y^m(t,s),Y^m(s,s),Z^m(t,s))-g_1^n(t,s,Y^m(t,s),Y^m(s,s),Z^m(t,s))| ds\Big]^p\\
\label{EBSVIEmnt-E-estimate}&\qq +C_p\dbE\Big[\sup_{t\in[0,T]}|\psi^m(t)-\psi^n(t)|^p\Big].
\end{align}
It is worth pointing out that
\begin{align*}
&g_1^m(t,s,Y^m(t,s),Y^m(s,s),Z^m(t,s))-g_1^n(t,s,Y^m(t,s),Y^m(s,s),Z^m(t,s))\\
&\q=g_1^m(t,s,0,0,0)-g_1^n(t,s,0,0,0),\q (t,s)\in\D[0,T]
\end{align*}
is a bounded deterministic function and
\begin{align*}
&g_2^m(t,s,Y^m(t,s),Y^m(s,s),Z^m(t,s))-g_2^n(t,s,Y^m(t,s),Y^m(s,s),Z^m(t,s))\\
&\q=g_2^m(t,s,0,0,0)-g_2^n(t,s,0,0,0),\q (t,s)\in\D[0,T]
\end{align*}
is bounded.
By the definitions of $\psi^n(\cd)$, $g^n(\cd)$ and dominated convergence theorem [$|\psi^n(\cd)|\leq |\psi(\cd)|$, $|g_i^n(t,s,0,0,0)|\leq|g_i(t,s,0,0,0)|,i=1,2$], we have
\begin{align}
\nn&\dbE\Big[\sup_{t\in[0,T]}|Y^m(t,t)-Y^n(t,t)|^p\Big] \\
\nn&\q\leq C_p\dbE\Big[\sup_{t\in[0,T]}\int_t^T|g_2^m(t,s,0,0,0)-g_2^n(t,s,0,0,0)|^p ds\Big]+C_p\dbE\Big[\sup_{t\in[0,T]}|\psi^m(t)-\psi^n(t)|^p\Big]\\
\nn&\qq +C_p \Big[\sup_{t\in[0,T]}\int_t^T|g_1^m(t,s,0,0,0)-g_1^n(t,s,0,0,0)| ds\Big]^p\\
\label{EBSVIEmnt-E-con}&\q\to 0, ~\hbox{as}~ m,n\to\i,
\end{align}
which implies that $\{Y^n(t,t);0\leq t\leq T\}_{n\geq 0}$ is  Cauchy in $L^p(\Om;C([0,T];\dbR^m))$.
Further, by \rf{EBSVIE-sta-estimate} in \autoref{thm-wellposedness}  and dominated convergence theorem, it is clear to see that
\bel{EBSVIEmy-E-conv}
\lim_{n\to\i}\sup_{t\in[0,T]}\dbE\Big[|Y^n(t,t)-Y(t,t)|^p\Big]\leq\lim_{n\to\i}\sup_{t\in[0,T]}\dbE\Big[\sup_{s\in[t,T]}|Y^n(t,s)-Y(t,s)|^p\Big]=0.
\ee
Combining \rf{EBSVIEmnt-E-con} with \rf{EBSVIEmy-E-conv}, we have
\bel{EBSVIEmy-E-conv2}
\lim_{n\to\i}\dbE\Big[\sup_{t\in[0,T]}|Y^n(t,t)-Y(t,t)|^p\Big]= 0.
\ee
Combining \rf{EBSVIEmy-E-conv2} with \rf{EBSVIEnt-E-estimate}, we have the estimate  \rf{pro-EBSVIE-supestimate}.
\end{proof}
\begin{remark}\rm
\autoref{thm-y-continuity} gives the $L^p(\Om;C([0,T];\dbR^m))$-norm estimate of $Y(t,t);0\leq t\leq T$,
which plays a basic role in our subsequent analysis.

\end{remark}

\section{Regularity of the adapted solution}\label{Sec:Regularity}

In this section, we are going to discuss the regularity property of the adapted solution to EBSVIE \rf{ebsvie-I}.
To begin with, we introduce the following space:
For any $p\geq 2$ and $0\leq R< S\leq T$, let $\Psi^p[R,S]$ be the space consists of all processes $\psi(\cd)\in L^\i(R,S;L^p_{\cF_T}(\Om;\dbR^m))$ such that
$$
\|\psi(\cd)\|^p_{\Psi^p[R,S]}\deq\sup_{(t,s)\in[R,S]^2}\dbE\Big[|\psi(t)|^p+\sum_{i=1}^d|D^i_t\psi(s)|^p\Big]<\i,
$$
where $D^i_t\psi(s)$ is the  Malliavin derivative of $\psi(s)$ with respect to $W^i(\cd)$.

\ms
Now, we introduce the following assumption [comparing with \ref{ass:B1}].
\begin{taggedassumption}{(B.3)}\label{ass:B3}\rm
Let $p\geq 2$ and the generator $g:\D\times \dbR^m\times\dbR^m\times\dbR^{m\times d}\times\Om\to\dbR^m$ be $\cB(\D\times \dbR^m\times\dbR^m\times\dbR^{m\times d})\bigotimes\cF_T$-measurable,
with $s\mapsto g(t,s,y,y',z)$ being $\dbF$-progressively measurable for all $(t,y,y',z)\in [0,T)\times\dbR^m\times\dbR^m\times\dbR^{m\times d}$.
Let $(y,y',z)\mapsto g(t,s,y,y',z)$ be continuously differentiable, and $(y,y',z)\mapsto [D^i_r g](t,s,y,y',z)$ be continuous.
Moreover, there is a process $L_0(t,s):\D[0,T]\times\Om\to[0,\i)$ satisfying
$$
\sup_{t\in[0,T]}\dbE\Big(\int_t^T|L_0(t,s)|ds\Big)^p<\i,
$$
such that
$$
\sum_{i=1}^d\big|[D^i_r g](t,s,y,y',z)\big|\leq L_0(t,s),\q \forall (t,s,y,y',z)\in\D[0,T]\times\dbR^m\times\dbR^m\times\dbR^{m\times d}.
$$
\end{taggedassumption}
The first main result of this section is the following.
\begin{theorem}\label{thm-EBSVIE-malliavan}
Let {\rm \ref{ass:B1}} and {\rm \ref{ass:B3}} hold.
For any $\psi(\cd)\in\Psi^p[0,T] $, let $(Y(\cd,\cd),Z(\cd,\cd))\in\cH^p[0,T]$ be the unique adapted solution to EBSVIE \rf{ebsvie-I}.
For any $(t,s)\in\D[0,T]$, $(Y(t,s),Z(t,s))$ is Malliavan derivable,
and $\{(D_r Y(t,s),D_r Z(t,s));(t,r)\in[0,T]^2;s\in[r\vee t,T]\}$ is the unique adapted solution to the following EBSVIE:
\begin{align}
\nn D^i_r Y(t,s)&=D^i_r \psi(t)+\int_s^T \bigg\{[D^i_rg] (t,\t,Y(t,\t),Y(\t,\t),Z(t,\t))\\
\nn&\qq\qq\qq\qq+g_y (t,\t,Y(t,\t),Y(\t,\t),Z(t,\t))D^i_r Y(t,\t)\\
\nn&\qq\qq\qq\qq+g_{y'} (t,\t,Y(t,\t),Y(\t,\t),Z(t,\t))D^i_r Y(\t,\t)\\
\nn&\qq\qq\qq\qq+\sum_{j=1}^d g_{z_j} (t,\t,Y(t,\t),Y(\t,\t),Z(t,\t))D^i_r Z_j(t,\t)\bigg\}d\t\\
\label{thm-reg-ma1}&\qq\qq-\int_s^T D^i_r Z(t,\t)dW(\t),\qq t\in[0,T],~s\in[r\vee t,T],~1\leq i\leq d.
\end{align}
In addition,
\begin{align}
\nn Z_i(t,r)&=D^i_r \psi(t)+\int_r^T \bigg\{[D^i_rg] (t,\t,Y(t,\t),Y(\t,\t),Z(t,\t))\\
\nn&\qq\qq\qq\qq+g_y (t,\t,Y(t,\t),Y(\t,\t),Z(t,\t))D^i_r Y(t,\t)\\
\nn&\qq\qq\qq\qq+g_{y'} (t,\t,Y(t,\t),Y(\t,\t),Z(t,\t))D^i_r Y(\t,\t)\\
\nn&\qq\qq\qq\qq+\sum_{j=1}^d g_{z_j} (t,\t,Y(t,\t),Y(\t,\t),Z(t,\t))D^i_r Z_j(t,\t)\bigg\}d\t\\
\label{thm-reg-ma2}&\qq\qq-\int_r^T D^i_r Z(t,\t)dW(\t),\qq (t,r)\in\D[0,T],~1\leq i\leq d,
\end{align}
where $Z_i(t,r)$ denotes the $i$-th column of the matrix $Z(t,r)$.
\end{theorem}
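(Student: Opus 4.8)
The plan is to first show that the candidate linear equation \rf{thm-reg-ma1} is itself a well-posed EBSVIE, so that \autoref{thm-wellposedness} yields existence and uniqueness of some $(U^{r,i},V^{r,i})$, and then to identify this solution with the Malliavin derivative of $(Y,Z)$ by an approximation argument patterned on the stage-by-stage construction in the proof of \autoref{thm-wellposedness}, combined with the classical Malliavin calculus for BSDEs (Pardoux--Peng \cite{Pardoux--Peng 1992}, El Karoui--Peng--Quenez \cite{Karoui--Peng--Quenez 1997}, Nualart \cite{Nualart 1995}).

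\textbf{Step 1 (well-posedness of \rf{thm-reg-ma1}).} Fix $1\le i\le d$ and almost every $r\in[0,T]$, write $\Lambda(t,\t)\deq(Y(t,\t),Y(\t,\t),Z(t,\t))$ (which is already determined), and define
$$\bar g^{\,r,i}(t,\t,u,u',v)\deq[D^i_rg](t,\t,\Lambda(t,\t))+g_y(t,\t,\Lambda(t,\t))u+g_{y'}(t,\t,\Lambda(t,\t))u'+\ts\sum_{j=1}^d g_{z_j}(t,\t,\Lambda(t,\t))v_j.$$
By {\rm\ref{ass:B1}}, $g$ is $C^1$ with $g_y,g_{y'},g_z$ bounded by $L$, so $\bar g^{\,r,i}$ is globally Lipschitz in $(u,u',v)$ with constant depending only on $L$; by {\rm\ref{ass:B3}}, $|\bar g^{\,r,i}(t,\t,0,0,0)|=|[D^i_rg](t,\t,\Lambda(t,\t))|\le L_0(t,\t)$, whence $\sup_{t}\dbE(\int_t^T|\bar g^{\,r,i}(t,\t,0,0,0)|d\t)^p<\i$; and the free term $D^i_r\psi(t)$ lies in $L^\i(0,T;L^p_{\cF_T}(\Om;\dbR^m))$ since $\psi(\cd)\in\Psi^p[0,T]$. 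Thus \autoref{thm-wellposedness}, applied on the truncated domain $\{(t,s):r\vee t\le s\le T\}$ exactly as in its own proof (an EBSVIE on $[r,T]$, then a family of BSDEs for $t<r$), produces a unique adapted pair $(U^{r,i}(\cd,\cd),V^{r,i}(\cd,\cd))$ solving \rf{thm-reg-ma1}, with \rf{EBSVIE-estimate} giving a bound uniform in $(r,i)$. A standard measurable dependence of the solution on $r$ --- via the stability estimate \rf{EBSVIE-sta-estimate} --- then makes $(r,t,s,\om)\mapsto(U^{r,i}(t,s),V^{r,i}(t,s))$ jointly measurable.

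\textbf{Step 2 (identification with $D(Y,Z)$).} It remains to prove $(Y(t,s),Z(t,s))\in\dbD_{1,p}$ with $D^i_rY=U^{r,i}$, $D^i_rZ=V^{r,i}$. I would induct over the stages of the proof of \autoref{thm-wellposedness}, showing at each stage that the assertion holds on the part of $\D[0,T]$ already resolved there; since the mesh is independent of the data, finitely many stages cover $\D[0,T]$. Two mechanisms recur. On a \emph{family-of-BSDEs} stage (as in Step 2 of the proof of \autoref{thm-wellposedness}), for fixed $t$ the process $s\mapsto(Y(t,s),Z(t,s))$ solves a BSDE with generator $g^S(t,s,y,z)=g(t,s,y,Y(s,s),z)$; by the induction hypothesis $s\mapsto Y(s,s)$ is already Malliavin differentiable on the relevant interval with derivative controlled as in Step 1, so the classical result on Malliavin differentiability of BSDE solutions applies and yields precisely \rf{thm-reg-ma1} restricted to that stage, the term $g_{y'}(\cd)\,D^i_rY(\t,\t)$ arising from the Malliavin chain rule applied to the $\om$-dependence $Y(s,s)$ inside $g^S$. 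On a \emph{contraction} stage (a small triangle, treated via the fixed-point map $\Th$ of that proof) I would Malliavin-differentiate the Picard iterates $\Th^n$: each iterate is a family of BSDEs, hence in $\dbD_{1,p}$, and its Malliavin derivative solves the linear BSDE obtained from \rf{thm-reg-ma1} with the diagonal term frozen at the previous iterate; the contraction estimate \rf{CONTRAT} together with its $\dbD_{1,p}$-enhanced analogue (from \autoref{lmm:BSDE} applied to those linear equations) makes the iterates Cauchy in the $\dbD_{1,p}$-norm, and closedness of the operator $D$ then gives both $(Y(t,s),Z(t,s))\in\dbD_{1,p}$ and the identification of its Malliavin derivative with $(U^{r,i},V^{r,i})$, i.e. \rf{thm-reg-ma1}.

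\textbf{Step 3 and the main obstacle.} To obtain \rf{thm-reg-ma2}: for fixed $t$, $s\mapsto(Y(t,s),Z(t,s))$ solves a Lipschitz BSDE on $[t,T]$, so the standard representation gives $Z_i(t,r)=D^i_rY(t,r)$ for almost every $r\in[t,T]$; combining this with \rf{thm-reg-ma1} evaluated at $s=r$ --- legitimate because, by \autoref{thm-y-continuity} together with the bounds of Step 1, $s\mapsto Y(t,s)$ is continuous as a $\dbD_{1,p}$-valued random field --- yields \rf{thm-reg-ma2}. The same $\dbD_{1,p}$-valued continuity of $(t,s)\mapsto Y(t,s)$ is what makes the term $D^i_rY(\t,\t)$ in \rf{thm-reg-ma1}--\rf{thm-reg-ma2} equal to the Malliavin derivative of the diagonal process $\t\mapsto Y(\t,\t)$. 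In my view the crux is exactly this self-referential coupling through the diagonal --- $D^i_rY(\t,\t)$ re-enters the equation governing $D^i_rY(t,s)$ --- which is precisely why the induction over the fixed-mesh partition is forced; and I expect the two most delicate points to be (a) a $\dbD_{1,p}$-version of the a priori and contraction estimates that is uniform both in the Picard index and across the cells of the partition, and (b) the interchange of Malliavin differentiation with evaluation on the diagonal $\{s=t\}$, which requires the solution to be continuous not merely in $L^p(\Om)$ but in $\dbD_{1,p}$.
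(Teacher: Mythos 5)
Your proposal follows essentially the same route as the paper: Malliavin-differentiate the Picard iterates from the contraction construction of \autoref{thm-wellposedness} (each iterate being a family of BSDEs with the diagonal term frozen at the previous iterate, so the classical Pardoux--Peng differentiability result applies), pass to the limit against the solution of the formal linearized EBSVIE via the stability estimate \rf{EBSVIE-sta-estimate}, invoke closedness of the operator $D$, and extend to all of $\D[0,T]$ by induction over the fixed-mesh stages. Your explicit treatment of the well-posedness of the linear equation, of \rf{thm-reg-ma2} via the identity $Z_i(t,r)=D^i_rY(t,r)$, and of the $\dbD_{1,p}$-continuity needed to evaluate on the diagonal are refinements of, not departures from, the paper's argument.
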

\begin{proof}
We see from the proof of \autoref{thm-wellposedness} that when $T-S>0$ is small, the map $\Th(\cd,\cd)$ defined by \rf{Th} is a contraction on $\cH^p[S,T]$.
Therefore, a Picard iteration sequence converges to the unique solution. Namely, if we define
\bel{Picard}\left\{\begin{aligned}
   (Y^0(\cd),Z^0(\cd)) &= 0,\\
   (Y^{k+1}(\cd),Z^{k+1}(\cd)) &= \Th(Y^{k}(\cd),Z^{k}(\cd)), ~k\geq 0,
\end{aligned}\right.\ee
then
\bel{Picard-converge}
\lim_{k\to\i} \|(Y^{k}(\cd),Z^{k}(\cd))-(Y(\cd),Z(\cd))\|_{\cH^p[S,T]}=0.
\ee
Next, from
$$ Y^{k+1}(t,s)=\psi(t) + \int_s^T g(t,r,Y^{k+1}(t,r),Y^k(r,r),Z^{k+1}(t,r))dr - \int_s^T Z^{k+1}(t,r)dW(r),$$
similar to \cite[Proposition 2.2]{Pardoux--Peng 1992}, we can recursively show
$$(D^i_rY^{k}(\cd,\cd),D^i_rZ^{k}(\cd,\cd))\in\cH^p[S,T],~k\geq 0,$$
and
\begin{align*}
D^i_r Y^{k+1}(t,s)&=D^i_r \psi(t)+\int_s^T \bigg\{[D^i_rg] (t,\t,Y^{k+1}(t,\t),Y^k(\t,\t),Z^{k+1}(t,\t))\\
&\qq\qq\qq\qq+g_y (t,\t,Y^{k+1}(t,\t),Y^k(\t,\t),Z^{k+1}(t,\t))D^i_r Y^{k+1}(t,\t)\\
&\qq\qq\qq\qq+g_{y'} (t,\t,Y^{k+1}(t,\t),Y^k(\t,\t),Z^{k+1}(t,\t))D^i_r Y^k(\t,\t)\\
&\qq\qq\qq\qq+\sum_{j=1}^d g_{z_j} (t,\t,Y^{k+1}(t,\t),Y^k(\t,\t),Z^{k+1}(t,\t))D^i_r Z^{k+1}_j(t,\t)\bigg\}d\t\\
&\qq\qq-\int_s^T D^i_r Z^{k+1}(t,\t)dW(\t),\qq 1\leq i\leq d.
\end{align*}
Next, we introduce the following EBSVIE [which is a formal Malliavin differentiation of \rf{ebsvie-I} ]:
\begin{align*}
\hat Y^{r,i}(t,s)&=D^i_r \psi(t)+\int_s^T \bigg\{[D^i_rg] (t,\t,Y(t,\t),Y(\t,\t),Z(t,\t))\\
&\qq\qq\qq\qq+g_y (t,\t,Y(t,\t),Y(\t,\t),Z(t,\t)) \hat Y^{r,i}(t,\t)\\
&\qq\qq\qq\qq+g_{y'} (t,\t,Y(t,\t),Y(\t,\t),Z(t,\t))\hat Y^{r,i}(\t,\t)\\
&\qq\qq\qq\qq+\sum_{j=1}^d g_{z_j} (t,\t,Y(t,\t),Y(\t,\t),Z(t,\t))\hat Z^{r,i}_j(t,\t)\bigg\}d\t\\
&\qq-\int_s^T \hat Z^{r,i}(t,\t)dW(\t),\qq 1\leq i\leq d.
\end{align*}
Then, by  the stability estimate \rf{EBSVIE-sta-estimate} in \autoref{thm-wellposedness}, we have
\begin{align}
\nn\th_{k+1}&\deq\|(D^i_r Y^{k+1}(\cd),D^i_r Z^{k+1}(\cd))-(\hat Y^{r,i}(\cd),\hat Z^{r,i}(\cd))\|^p_{\cH^p[S,T]}\\
        &\nn\leq C_p\sup_{t\in[S,T]}\dbE\Big\{\int_t^T \big|[D^i_rg] (t,\t,Y(t,\t),Y(\t,\t),Z(t,\t))\\
        &\nn\qq\qq\qq\qq\q-[D^i_rg] (t,\t,Y^{k+1}(t,\t),Y^k(\t,\t),Z^{k+1}(t,\t))\big|\\
        &\nn\qq\qq\qq\qq+\big|g_y(t,\t,Y(t,\t),Y(\t,\t),Z(t,\t))\\
        &\nn\qq\qq\qq\qq\q-g_y(t,\t,Y^{k+1}(t,\t),Y^k(\t,\t),Z^{k+1}(t,\t))\big|\big|\hat Y^{r,i}(t,\t)\big|\\
        &\nn\qq\qq\qq\qq+\big|g_{y^\prime}(t,\t,Y(t,\t),Y(\t,\t),Z(t,\t))\\
        &\nn\qq\qq\qq\qq\q-g_{y^\prime}(t,\t,Y^{k+1}(t,\t),Y^k(\t,\t),Z^{k+1}(t,\t))\big|\big|\hat Y^{r,i}(\t,\t)\big|\\
        &\nn\qq\qq\qq\qq+\sum_{j=1}^d\big|g_{z_j}(t,\t,Y(t,\t),Y(\t,\t),Z(t,\t))\\
        &\nn\qq\qq\qq\qq\q-g_{z_j}(t,\t,Y^{k+1}(t,\t),Y^k(\t,\t),Z^{k+1}(t,\t))\big|\big|\hat Z_j^{r,i}(t,\t)\big|ds\Big\}^p\\
        &\nn\q +C_p (T-S)^p\|(D^i_r Y^k(\cd),D^i_r Z^k(\cd))-(\hat Y^{r,i}(\cd),\hat Z^{r,i}(\cd))\|^p_{\cH^p[S,T]}\\
        &\label{thetak}\deq \eta_k+\a\th_k.
\end{align}
If necessary, we shrink $T-S$ such that
\bel{alpha}
\a\deq C_p(T-S)^p<1.
\ee
By the convergence \rf{Picard-converge} and dominated convergence theorem, we see that
\bel{eta-con}
\lim_{k\to\i}\eta_k=0.
\ee
Then \rf{thetak} implies
\bel{theta-con}
\lim_{k\to\i}\th_k=0.
\ee
Since operator $D^i_r$ is closed, we have
\bel{hatY-Yk}
\hat Y^{r,i}(t,s)=D^i_rY(t,s),\q \hat Z^{r,i}(t,s)=D^i_rZ(t,s),\qq (t,s)\in\D[S,T],~a.s.
\ee
This proves \rf{thm-reg-ma1}--\rf{thm-reg-ma2} for $T-S$ small.
Similar to the proof of \autoref{thm-wellposedness}, we can prove \rf{thm-reg-ma1}--\rf{thm-reg-ma2} on $[0,T]$ by induction.
\end{proof}

Now, let us recall the notation $X^{t,x}(s)\deq X^{t,x}(s\vee t);0\leq s\leq T$ and consider the Markovian EBSVIE \rf{mebsvie}.
We first introduce the following assumption [comparing with \ref{ass:B1}--\ref{ass:B3}].
\begin{taggedassumption}{(B.4)}\label{ass:B4}
Suppose the generator $g(\cd)$ and the free term $\psi(\cd)$ satisfy:
\begin{enumerate}[(i)]
\item
Let the generator $g:\D[0,T]\times\dbR^d\times\dbR^m\times\dbR^m\times\dbR^{m\times d}\to\dbR^m$ be
continuous such that
\bel{B4-g(t,s)}
\sup_{t\in[0,T]}\int_t^T|g(t,s,0,0,0,0)|ds<\i.
\ee
Moreover, for any $(t,s)\in\D[0,T]$, let $g(t,s,\cd,\cd,\cd,\cd)\in C^3_b(\dbR^d\times\dbR^m\times\dbR^m\times\dbR^{m\times d};\dbR^{m})$
and the corresponding partial derivatives of order less than or equal to three are bounded by a constant $L>0$.
\item
Let the free term $\psi:[0,T]\times\dbR^d\to\dbR^m$ be continuous such that
\bel{B4-psi(t,x)}
\sup_{t\in[0,T]}|\psi(t,0)|<\i.
\ee
For any $t\in[0,T]$, let $\psi(t,\cd)\in C^3_b(\dbR^d;\dbR^{m})$ and the corresponding partial derivatives of order less than or equal to three are bounded by the constant $L>0$.
\end{enumerate}
\end{taggedassumption}

\begin{corollary}\label{Mallivan-MEBSVIE}
Let $p\geq 2$ and {\rm \ref{ass:F.1}}--{\rm \ref{ass:B4}} hold,
then EBSVIE \rf{mebsvie} admits a unique adapted solution $(Y^{t,x}(\cd,\cd),Z^{t,x}(\cd,\cd))\in\cH^p[0,T]$.
For any $(s,r)\in\D[0,T]$, $(Y^{t,x}(s,r),Z^{t,x}(s,r))$ is Malliavan derivable,
and $\{(D_\t Y^{t,x}(s,r),D_\t Z^{t,x}(s,r));(s,\t)\in[0,T]^2;r\in[s\vee \t,T]\}$ is the unique adapted solution to the following EBSVIE:
\begin{align}
\nn&\q D^i_\t Y^{t,x}(s,r)\\
\nn &= \psi_x(s,X^{t,x}(T))D^i_\t X^{t,x}(T)\\
\nn&\q+\int_r^T \bigg\{g_x(s,u,X^{t,x}(u),Y^{t,x}(s,u),Y^{t,x}(u,u),Z^{t,x}(s,u))D^i_\t X^{t,x}(u)\\
\nn &\q\qq+g_y(s,u,X^{t,x}(u),Y^{t,x}(s,u),Y^{t,x}(u,u),Z^{t,x}(s,u))D^i_\t Y^{t,x}(s,u)\\
\nn &\q\qq+g_{y'} (s,u,X^{t,x}(u),Y^{t,x}(s,u),Y^{t,x}(u,u),Z^{t,x}(s,u))D^i_\t Y^{t,x}(u,u)\\
\nn &\q\qq+\sum_{j=1}^d g_{z_j} (s,u,X^{t,x}(u),Y^{t,x}(s,u),Y^{t,x}(u,u),Z^{t,x}(s,u))D^i_\t Z^{t,x}_j(s,u)\bigg\}du\\
\label{Mallivan-MEBSVIE-Y}&\q-\int_r^T D^i_\t Z^{t,x}(s,u)dW(u),\qq s\in[0,T],~r\in[s\vee\t,T],~1\leq i\leq d.
\end{align}
Moreover, for any $1\leq i\leq d$, $\{D^i_r Y^{t,x}(s,r);(s,r)\in\D[0,T]\}$ is a version of $\{ Z_i^{t,x}(s,r);(s,r)\in\D[0,T]\}$,
where $Z_i^{t,x}(\cd)$ denotes the $i$-th column of the matrix  $Z^{t,x}(\cd)$.
\end{corollary}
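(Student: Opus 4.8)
The plan is to deduce the corollary from \autoref{thm-wellposedness} and \autoref{thm-EBSVIE-malliavan}, by regarding the Markovian EBSVIE \rf{mebsvie} as the instance of \rf{ebsvie-I} obtained by freezing the SDE data $(t,x)$ and taking the (random, non-anticipating in its integration variable) generator and free term
$$
\wt g(s,\t,y,y',z)\deq g(s,\t,X^{t,x}(\t),y,y',z),\qq \wt\psi(s)\deq\psi(s,X^{t,x}(T)).
$$
First I would check that $(\wt g,\wt\psi)$ satisfies \ref{ass:B1}. The map $\t\mapsto\wt g(s,\t,y,y',z)$ is $\dbF$-progressively measurable because $X^{t,x}(\cd)$ is $\dbF$-adapted and continuous and $g$ is deterministic and continuous; the Lipschitz estimate \rf{B1-L} holds with the constant $L$ of \ref{ass:B4}(i); and the integrability \rf{B1-g(t,s)} together with $\wt\psi(\cd)\in L^\i(0,T;L^p_{\cF_T}(\Om;\dbR^m))$ follow from the linear growth of $g(s,\t,\cd,0,0,0)$ and $\psi(s,\cd)$ (implied by the $C^3_b$ hypotheses) combined with the bound $\dbE[\sup_{0\les u\les T}|X^{t,x}(u)|^p]\les C_p(1+|x|^p)$ of \autoref{lmm:well-posedness-SDE}. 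Hence \autoref{thm-wellposedness} gives the unique adapted solution $(Y^{t,x}(\cd,\cd),Z^{t,x}(\cd,\cd))\in\cH^p[0,T]$, with the book-keeping that the variables $(t,s,r)$ of \autoref{thm-EBSVIE-malliavan} correspond here to $(s,r,\t)$.

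Next I would verify the Malliavin hypotheses: $\wt\psi(\cd)\in\Psi^p[0,T]$ and $\wt g$ satisfies \ref{ass:B3}. Since $g(s,\t,\cd)$ and $\psi(s,\cd)$ are $C^3_b$ in all of their arguments (in particular $C^1$ with bounded $x$-derivative) and $X^{t,x}(\t),X^{t,x}(T)\in(\dbD_{1,2})^d$ by \autoref{lmm:Malliavan-SDE}, the Malliavin chain rule gives
$$
[D^i_\t \wt g](s,u,y,y',z)=g_x(s,u,X^{t,x}(u),y,y',z)\,D^i_\t X^{t,x}(u),\qq D^i_\t\wt\psi(s)=\psi_x(s,X^{t,x}(T))\,D^i_\t X^{t,x}(T),
$$
with the continuity in $(y,y',z)$ inherited from that of $g_x$. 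The domination bound of \ref{ass:B3} and the $\Psi^p$-bound then reduce to $\sup_{\t}\dbE[\sup_{0\les u\les T}|D_\t X^{t,x}(u)|^p]<\i$, which one reads off the linear SDE \rf{mallivan-SDE} for $D_\t X^{t,x}$: its coefficients $b_x,\si^i_x$ are bounded and its initial datum $\si(\t,X^{t,x}(\t))$ has $p$-th moment $\les C_p(1+|x|^p)$ uniformly in $\t$ by \autoref{lmm:well-posedness-SDE}. Applying \autoref{thm-EBSVIE-malliavan} to $(\wt g,\wt\psi)$ and inserting the above chain-rule identities into \rf{thm-reg-ma1} produces exactly \rf{Mallivan-MEBSVIE-Y}, the term $g_x(\cdots)D^i_\t X^{t,x}(u)$ arising from $[D^i_\t\wt g]$ and the free term $\psi_x(s,X^{t,x}(T))D^i_\t X^{t,x}(T)$ from $D^i_\t\wt\psi(s)$. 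Finally, that $D^i_rY^{t,x}(s,r)$ is a version of $Z^{t,x}_i(s,r)$ on $\D[0,T]$ is \rf{thm-reg-ma2} restated for the present data (equivalently, compare \rf{thm-reg-ma1} at $s=r$ with \rf{thm-reg-ma2} and invoke uniqueness).

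The main obstacle is the second step. One must justify the Malliavin chain rule for the composite objects $g(s,u,X^{t,x}(u),y,y',z)$ and $\psi(s,X^{t,x}(T))$ — i.e. that they lie in the relevant $\dbD_{1,p}$ spaces with the stated derivatives, uniformly in the frozen variables $(y,y',z)$ and in the parameters $(s,\t)$ — and then push the SDE-regularity estimates for $X^{t,x}$, $\nabla X^{t,x}$, $(\nabla X^{t,x})^{-1}$ and $D_\t X^{t,x}$ from \autoref{lmm:well-posedness-SDE}, \autoref{diff-SDE} and \autoref{lmm:Malliavan-SDE} through the quadratic-in-$|x|$ growth so as to land inside $\Psi^p[0,T]$ and inside the domination hypothesis of \ref{ass:B3}. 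Once these inputs are secured, the remainder is a direct invocation of the two theorems already proved.
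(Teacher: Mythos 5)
Your proposal is correct and takes essentially the same route as the paper: the paper likewise treats \rf{mebsvie} as the instance of \rf{ebsvie-I} with generator $g(s,\t,X^{t,x}(\t),\cd)$ and free term $\psi(s,X^{t,x}(T))$, checks the integrability hypotheses of \autoref{thm-wellposedness} via \autoref{lmm:well-posedness-SDE}, and then verifies the Malliavin hypotheses of \autoref{thm-EBSVIE-malliavan} through the chain-rule identities $D^i_r\psi(s,X^{t,x}(T))=\psi_x(s,X^{t,x}(T))D^i_rX^{t,x}(T)$ and $D^i_rg(s,u,X^{t,x}(u),y,y',z)=g_x(\cds)D^i_rX^{t,x}(u)$ together with the moment bounds for $D^i_rX^{t,x}$ from \autoref{lmm:Malliavan-SDE} and \autoref{remark-sde}. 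The ``obstacle'' you flag is handled in the paper exactly as you suggest, by citing the $C^3_b$ hypotheses of \ref{ass:B4} and the standard Malliavin calculus results for the forward SDE.
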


\begin{proof}
By \autoref{lmm:well-posedness-SDE} and {\rm \ref{ass:B4}}, we have
\begin{align*}
 &\sup_{s\in[0,T]}\dbE|\psi(s,X^{t,x}(T))|^p\leq C_p(1+|x|^p)<\i, \\
  &\sup_{s\in[0,T]}\dbE\Big[\int_s^T|g(s,r,X^{t,x}(r),0,0,0)|dr\Big]^p\leq C_p(1+|x|^p)<\i.
\end{align*}
Thus, by \autoref{thm-wellposedness}, EBSVIE \rf{mebsvie} admits a unique adapted solution $(Y^{t,x}(\cd,\cd),Z^{t,x}(\cd,\cd))\in\cH^p[0,T]$.
Further, by {\rm \ref{ass:B4}}, \autoref{lmm:Malliavan-SDE} and \autoref{remark-sde}, we have
\begin{align*}
&\sup_{s\in[0,T]}\dbE|D^i_r\psi(s,X^{t,x}(T))|^p=\sup_{s\in[0,T]}\dbE|\psi_x(s,X^{t,x}(T))D^i_r X^{t,x}(T)|^p\leq C_p(1+|x|^p)<\i,\\
&\sup_{s\in[0,T]}\dbE\Big[\int_s^T |D^i_rg(s,u,X^{t,x}(u),y,y',z)|du\Big]^p\\
&\q=\sup_{s\in[0,T]}\dbE\Big[\int_s^T |g_x(s,u,X^{t,x}(u),y,y',z)D^i_r X^{t,x}(u)|du\Big]^p\\
&\q\leq \sup_{s\in[0,T]}\dbE\Big[\int_s^T |D^i_r X^{t,x}(u)|du\Big]^p
\leq C_p(1+|x|^p)<\i.
\end{align*}
Thus, by \autoref{thm-EBSVIE-malliavan}, for any $(s,r)\in\D[0,T]$, $(Y^{t,x}(s,r),Z^{t,x}(s,r))$ is Malliavan derivable,
and $\{(D_\t Y^{t,x}(s,r),D_\t Z^{t,x}(s,r));(s,\t)\in[0,T]^2;r\in[s\vee \t,T]\}$ is the unique adapted solution to EBSVIE \rf{Mallivan-MEBSVIE-Y}.
\end{proof}

Let $\{\nabla Y^{t,x}(s,r),\nabla Z^{t,x}(s,r);(s,r)\in\D[0,T]\}\in\cH^p[0,T]$ be the unique adapted solution to the following EBSVIE:
\begin{align}
\nn&\q \nabla Y^{t,x}(s,r)\\
\nn &= \psi_x(s,X^{t,x}(T))\nabla X^{t,x}(T)\\
\nn&\q+\int_r^T \bigg\{g_x(s,u,X^{t,x}(u),Y^{t,x}(s,u),Y^{t,x}(u,u),Z^{t,x}(s,u))\nabla X^{t,x}(u)\\
\nn &\q\qq+g_y(s,u,X^{t,x}(u),Y^{t,x}(s,u),Y^{t,x}(u,u),Z^{t,x}(s,u))\nabla Y^{t,x}(s,u)\\
\nn &\q\qq+g_{y'}(s,u,X^{t,x}(u),Y^{t,x}(s,u),Y^{t,x}(u,u),Z^{t,x}(s,u))\nabla Y^{t,x}(u,u)\\
\nn &\q\qq+\sum_{j=1}^d g_{z_j}(s,u,X^{t,x}(u),Y^{t,x}(s,u),Y^{t,x}(u,u),Z^{t,x}(s,u))\nabla Z^{t,x}_j(s,u)\bigg\}du\\
\label{diff-MEBSVIE-Y}&\q-\int_r^T \nabla Z^{t,x}(s,u)dW(u),\qq (s,r)\in\D[0,T].
\end{align}
Similar to \autoref{remark-sde}, we have the following proposition.

\begin{proposition}\label{pro-Mallivan-diff-MEBSVIE}
For any $(t,x)\in[0,T)\times\dbR^d$, $s\in[0,T]$, $r\in[t\vee s,T]$, $\t\in[t,r]$,

\bel{pro-Mallivan-diff-MEBSVIE-main}
D_\t Y^{t,x}(s,r)=\nabla Y^{t,x}(s,r)(\nabla X^{t,x}(\t))^{-1}\si(\t,X^{t,x}(\t)),
\ee
and the process $\{ D_r Y^{t,x}(s,r);r\in[t\vee s, T]\}$ is a.s. continuous.
\end{proposition}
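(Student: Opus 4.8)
The plan is to mimic the proof of \rf{diff-malli-sde} in \autoref{remark-sde}: I will write down a candidate solution of the Malliavin EBSVIE \rf{Mallivan-MEBSVIE-Y} built from the variational process $\nabla Y^{t,x}$ of \rf{diff-MEBSVIE-Y}, verify it satisfies that equation, and then invoke the uniqueness asserted in \autoref{Mallivan-MEBSVIE}.

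Fix $\t\in[t,T]$, and for $1\le i\le d$ let $\si^i(\cd)$ denote the $i$-th column of $\si$ and set the $\cF_\t$-measurable random vector $A^i_\t\deq(\nabla X^{t,x}(\t))^{-1}\si^i(\t,X^{t,x}(\t))\in\dbR^d$, the inverse existing since $\nabla X^{t,x}(\cd)$ solves the linear SDE \rf{rep-diff-SDE} with $\nabla X^{t,x}(t)=I_d$. Define, for $(s,r)\in\D[0,T]$ with $r\ge\t$,
$$
\widehat Y^{\t,i}(s,r)\deq\nabla Y^{t,x}(s,r)A^i_\t,\qq\widehat Z^{\t,i}(s,r)\deq\nabla Z^{t,x}(s,r)A^i_\t .
$$
First I would check $(\widehat Y^{\t,i},\widehat Z^{\t,i})\in\cH^p[0,T]$: under \ref{ass:B4} the derivatives $g_x,g_y,g_{y'},g_{z_j}$ are bounded by $L$, so \autoref{thm-wellposedness} applied to \rf{diff-MEBSVIE-Y} gives $\cH^{p'}[0,T]$--bounds on $(\nabla Y^{t,x},\nabla Z^{t,x})$ for every exponent $p'\ge 2$ (admissible because $\psi(t,\cd),g(t,s,\cd)\in C_b^3$ have at most linear growth), while under \ref{ass:F.1} standard SDE estimates give moments of all orders for $(\nabla X^{t,x}(\t))^{-1}$ and $\si(\t,X^{t,x}(\t))$; a H\"older split then yields the $\cH^p$--integrability of the product.

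The core computation is to multiply \rf{diff-MEBSVIE-Y} on the right by $A^i_\t$. Since $t\le\t\le r\le u$, \rf{diff-malli-sde} gives $\nabla X^{t,x}(u)A^i_\t=D^i_\t X^{t,x}(u)$ and $\nabla X^{t,x}(T)A^i_\t=D^i_\t X^{t,x}(T)$, so the terminal term becomes $\psi_x(s,X^{t,x}(T))D^i_\t X^{t,x}(T)=D^i_\t\psi(s,X^{t,x}(T))$ and the $g_x$--term becomes $g_x(\cd)D^i_\t X^{t,x}(u)$; the $g_y,g_{y'},g_{z_j}$--terms simply acquire the factor $A^i_\t$ on the right, turning $\nabla Y^{t,x}(s,u),\nabla Y^{t,x}(u,u),\nabla Z^{t,x}_j(s,u)$ into $\widehat Y^{\t,i}(s,u),\widehat Y^{\t,i}(u,u),\widehat Z^{\t,i}_j(s,u)$; and because $A^i_\t$ is $\cF_r$--measurable it may be pulled inside the It\^o integral, $\big(\int_r^T\nabla Z^{t,x}(s,u)\,dW(u)\big)A^i_\t=\int_r^T\widehat Z^{\t,i}(s,u)\,dW(u)$. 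Thus $(\widehat Y^{\t,i},\widehat Z^{\t,i})$ satisfies exactly EBSVIE \rf{Mallivan-MEBSVIE-Y}; by its uniqueness (\autoref{Mallivan-MEBSVIE}) one gets $\widehat Y^{\t,i}(s,r)=D^i_\t Y^{t,x}(s,r)$ and $\widehat Z^{\t,i}(s,r)=D^i_\t Z^{t,x}(s,r)$, and collecting the columns $i=1,\dots,d$ gives \rf{pro-Mallivan-diff-MEBSVIE-main}.

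For the continuity of $r\mapsto D_rY^{t,x}(s,r)$ on $[t\vee s,T]$, set $\t=r$ in \rf{pro-Mallivan-diff-MEBSVIE-main}:
$$
D_rY^{t,x}(s,r)=\nabla Y^{t,x}(s,r)\,(\nabla X^{t,x}(r))^{-1}\,\si(r,X^{t,x}(r)).
$$
Each factor has an a.s. continuous version: for fixed $s$, \rf{diff-MEBSVIE-Y} is a linear BSDE on $[s,T]$ in $(\nabla Y^{t,x}(s,\cd),\nabla Z^{t,x}(s,\cd))$ with Lipschitz driver and integrable data, so $r\mapsto\nabla Y^{t,x}(s,r)$ has continuous paths by \autoref{lmm:BSDE}; $r\mapsto\nabla X^{t,x}(r)$ is a.s. continuous by \autoref{diff-SDE}, and its inverse (solving a companion linear SDE) is a.s. continuous too; and $r\mapsto\si(r,X^{t,x}(r))$ is a.s. continuous by continuity of $\si$ and of the paths of $X^{t,x}(\cd)$. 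The product is then a.s. continuous, which finishes the proof. The only genuinely delicate point is the integrability bookkeeping in the third paragraph — establishing $(\widehat Y^{\t,i},\widehat Z^{\t,i})\in\cH^p[0,T]$ and the legitimacy of moving $A^i_\t$ across the stochastic integral — the rest being a line-by-line transcription of the identity behind \rf{diff-malli-sde}.
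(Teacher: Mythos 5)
Your proof is correct and follows essentially the same route as the paper: right-multiply the variational EBSVIE \rf{diff-MEBSVIE-Y} by $(\nabla X^{t,x}(\t))^{-1}\si(\t,X^{t,x}(\t))$, use the identity \rf{diff-malli-sde} to recognize the resulting equation as the Malliavin EBSVIE \rf{Mallivan-MEBSVIE-Y}, and conclude by uniqueness of its adapted solution, with continuity then read off from the continuity of the three factors. The paper states this in two lines; you have merely supplied the integrability bookkeeping it omits.
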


\begin{proof}
By \rf{diff-malli-sde} in \autoref{remark-sde} and the uniqueness of the adapted solution to \rf{diff-MEBSVIE-Y}, we have \rf{pro-Mallivan-diff-MEBSVIE-main},
and the continuity of $\{ D_r Y^{t,x}(s,r);r\in[t\vee s, T]\}$ comes from that of $\nabla Y^{t,x}(s,r)$, $ (\nabla X^{t,x}(r))^{-1}$, $\si(r, X^{t,x}(r));r\in[t\vee s, T]$.
\end{proof}

For any $(t,s,x)\in[0,T)^2\times\dbR^d$, by \autoref{Mallivan-MEBSVIE} and \autoref{pro-Mallivan-diff-MEBSVIE}, we deduce that $\{Z^{t,x}(s,r);r\in[t\vee s, T]\}$ has an a.s. continuous version,
and we shall identify  $Z^{t,x}(s,\cd)$ with its continuous version from now on.
An immediate consequence of \autoref{Mallivan-MEBSVIE} and \autoref{pro-Mallivan-diff-MEBSVIE} is now:

\begin{lemma}\label{Z(t,s)-repre}
For any $(t,x)\in[0,T)\times\dbR^d$, $s\in[0,T]$, $r\in[t\vee s,T]$, we have
$$
Z^{t,x}(s,r)=\nabla Y^{t,x}(s,r)(\nabla X^{t,x}(r))^{-1}\si(r,X^{t,x}(r)).
$$
\end{lemma}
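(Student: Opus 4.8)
The statement is meant to be essentially a substitution, so the plan is to read it off from \autoref{Mallivan-MEBSVIE} and \autoref{pro-Mallivan-diff-MEBSVIE}, paying attention only to the passage to continuous versions. First I would fix $(t,x)\in[0,T)\times\dbR^d$ and $s\in[0,T]$, and note that for $r\in[t\vee s,T]$ one has $r\geq t$, so $\t=r$ is a legitimate choice in \rf{pro-Mallivan-diff-MEBSVIE-main}; evaluating that identity at $\t=r$ gives, column by column,
$$
D^i_r Y^{t,x}(s,r)=\big[\nabla Y^{t,x}(s,r)(\nabla X^{t,x}(r))^{-1}\si(r,X^{t,x}(r))\big]_i,\qquad 1\leq i\leq d,
$$
and \autoref{pro-Mallivan-diff-MEBSVIE} also records that $r\mapsto D^i_r Y^{t,x}(s,r)$ has an a.s.\ continuous version on $[t\vee s,T]$.

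Next I would invoke the final assertion of \autoref{Mallivan-MEBSVIE}: for each $i$, $\{D^i_r Y^{t,x}(s,r);(s,r)\in\D[0,T]\}$ is a version of $\{Z^{t,x}_i(s,r);(s,r)\in\D[0,T]\}$. Since $Z^{t,x}(s,\cd)$ has already been identified with its a.s.\ continuous version, $Z^{t,x}_i(s,\cd)$ and $D^i_\cd Y^{t,x}(s,\cd)$ are two a.s.\ continuous processes on $[t\vee s,T]$ that coincide for a.e.\ $r$, hence are indistinguishable; therefore $Z^{t,x}_i(s,r)=D^i_r Y^{t,x}(s,r)$ for all $r\in[t\vee s,T]$, a.s. Assembling the $d$ columns then yields
$$
Z^{t,x}(s,r)=\nabla Y^{t,x}(s,r)(\nabla X^{t,x}(r))^{-1}\si(r,X^{t,x}(r)),
$$
as claimed.

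The only point that is not completely mechanical is this version/indistinguishability step: one must be certain that an identity holding for Lebesgue-a.e.\ $r$ can be upgraded to one holding for every $r$, and this is precisely why the a.s.\ continuity furnished by \autoref{pro-Mallivan-diff-MEBSVIE} (together with the a.s.\ continuity of $Z^{t,x}(s,\cd)$ noted just before the lemma) is needed. Everything else is a direct substitution, so I do not expect any serious obstacle.
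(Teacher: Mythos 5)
Your proposal is correct and follows exactly the route the paper intends: the paper gives no written proof, simply declaring the lemma an ``immediate consequence'' of \autoref{Mallivan-MEBSVIE} and \autoref{pro-Mallivan-diff-MEBSVIE} after identifying $Z^{t,x}(s,\cdot)$ with its continuous version. Your explicit spelling-out of the version-to-indistinguishability upgrade via a.s.\ continuity is the only nontrivial step, and you have handled it correctly.
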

For any $p\geq 2$, similar to \autoref{thm-wellposedness}, we can establish the $L^p(\Om)$ estimate for $\sup_{r\in[s,T]}|\nabla Y^{t,x}(s,r)|$.
Thus, we deduce from the above lemma:
\begin{lemma}\label{z-estimate}
For any $(t,x)\in[0,T)\times\dbR^d$ and $p\geq 2$,  we have
$$
\sup_{s\in[t,T]}\dbE\[\sup_{r\in[s,T]}|Z^{t,x}(s,r)|^p\]<\i.
$$
\end{lemma}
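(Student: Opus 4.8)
The plan is to exploit the product representation from \autoref{Z(t,s)-repre}, namely
\[
Z^{t,x}(s,r)=\nabla Y^{t,x}(s,r)\,(\nabla X^{t,x}(r))^{-1}\,\si(r,X^{t,x}(r)),\qquad t\le s\le r\le T,
\]
and to estimate the three factors separately, each in an $L^{q}(\Om)$-norm with $q=3p$; H\"older's inequality then converts the $L^{q}$-bounds on the factors into the desired $L^{p}$-bound on the product. So, after fixing $q=3p$, the task splits into three a priori estimates, each required to hold uniformly in $s\in[t,T]$: (a) $\dbE[\sup_{r\in[s,T]}|\nabla Y^{t,x}(s,r)|^{q}]<\i$; (b) $\dbE[\sup_{r\in[t,T]}|(\nabla X^{t,x}(r))^{-1}|^{q}]<\i$; (c) $\dbE[\sup_{r\in[t,T]}|\si(r,X^{t,x}(r))|^{q}]<\i$.

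Estimate (c) is immediate from \ref{ass:F.1} — which forces $|\si(r,x)|\le C(1+|x|)$ — together with the moment bound $\dbE[\sup_{0\le r\le T}|X^{t,x}(r)|^{q}]\le C_q(1+|x|^{q})$ of \autoref{lmm:well-posedness-SDE}. For estimate (b), \autoref{diff-SDE} says $\nabla X^{t,x}(\cd)$ solves the linear SDE \rf{rep-diff-SDE} whose coefficients $b_x,\si_x$ are bounded by \ref{ass:F.1}; applying It\^o's formula to $(\nabla X^{t,x}(\cd))^{-1}$ shows it solves another linear SDE with bounded coefficients, and the classical $L^{q}$-moment estimate for linear SDEs (Gr\"onwall) finishes it.

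The substantial step is (a), i.e.\ the $L^{q}$-estimate for $\sup_r|\nabla Y^{t,x}(s,r)|$ announced right before the statement. By definition $\nabla Y^{t,x}(\cd,\cd)$ is the adapted solution to the linear EBSVIE \rf{diff-MEBSVIE-Y}, which has exactly the structure of \rf{ebsvie-I}: free term $\psi_x(s,X^{t,x}(T))\nabla X^{t,x}(T)$, a generator that is globally Lipschitz in $(\nabla Y^{t,x}(s,\cd),\nabla Y^{t,x}(\cd,\cd),\nabla Z^{t,x})$ with constant $\le L$ (since $g_y,g_{y'},g_{z_j}$ are bounded by $L$ under \ref{ass:B4}), plus the inhomogeneous term $g_x(\cd)\nabla X^{t,x}(u)$. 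Using $|\psi_x|\le L$, $|g_x|\le L$, and the bound $\dbE[\sup_r|\nabla X^{t,x}(r)|^{q}]\le C_q$ that follows from \rf{rep-diff-SDE} by Gr\"onwall (cf.\ \autoref{lmm:well-posedness-SDE}), one checks that the free term lies in $L^{\i}(0,T;L^{q}_{\cF_T}(\Om;\dbR^{m\times d}))$ and that $\sup_{s\in[t,T]}\dbE\big(\int_s^T|g_x(\cd)\nabla X^{t,x}(u)|du\big)^{q}<\i$, with constants depending only on $q$, $x$ and the structural bounds. Re-running the contraction-plus-induction scheme from the proof of \autoref{thm-wellposedness} for \rf{diff-MEBSVIE-Y}, with the $g_x$-term absorbed into the free data, then yields $\sup_{s\in[t,T]}\dbE[\sup_{r\in[s,T]}|\nabla Y^{t,x}(s,r)|^{q}]\le C_q(1+|x|^{q})<\i$.

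Combining (a)--(c) via H\"older's inequality gives $\sup_{s\in[t,T]}\dbE[\sup_{r\in[s,T]}|Z^{t,x}(s,r)|^p]\le C_p(1+|x|^p)<\i$, which is stronger than the assertion. The only place that demands care is (a): one must verify that \rf{diff-MEBSVIE-Y} genuinely falls under \autoref{thm-wellposedness} with $L^{q}$-integrable data, after which the a priori estimate \rf{EBSVIE-estimate} (with exponent $q$ in place of $p$) does the rest; (b) and (c) are standard SDE moment estimates.
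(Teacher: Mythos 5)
Your proposal is correct and follows essentially the same route as the paper: the paper also deduces the lemma from the product representation $Z^{t,x}(s,r)=\nabla Y^{t,x}(s,r)(\nabla X^{t,x}(r))^{-1}\si(r,X^{t,x}(r))$ of \autoref{Z(t,s)-repre} together with an $L^p(\Om)$ estimate for $\sup_{r}|\nabla Y^{t,x}(s,r)|$ obtained by applying the well-posedness machinery of \autoref{thm-wellposedness} to the linear EBSVIE \rf{diff-MEBSVIE-Y}. You merely make explicit the H\"older bookkeeping (taking $L^{3p}$ bounds on each factor) and the standard moment estimates for $(\nabla X^{t,x})^{-1}$ and $\si(r,X^{t,x}(r))$ that the paper leaves implicit.
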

Further, we have the following result.

\begin{proposition} \label{Y-adapted}
Let {\rm \ref{ass:F.1}}--{\rm \ref{ass:B4}} hold and $(Y^{t,x}(\cd,\cd),Z^{t,x}(\cd,\cd))\in\cH^p[0,T]$ be the unique adapted solution to EBSVIE \rf{mebsvie},
then for any $s\in[0,T)$, $\{(Y^{t,x}(s,r)$, $Z^{t,x}(s,r));s\leq r\leq T\}$ is $\dbF^{t}$-adapted.
\end{proposition}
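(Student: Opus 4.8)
The plan is to re-solve the Markovian EBSVIE \rf{mebsvie} \emph{within the subfiltration} $\dbF^t$ and then invoke the uniqueness part of \autoref{thm-wellposedness} to identify the $\dbF^t$-adapted field so produced with $(Y^{t,x}(\cd,\cd),Z^{t,x}(\cd,\cd))$. The starting observation is that all the It\^o integration appearing in \rf{mebsvie} takes place over subintervals of $[t,T]$, and that $\wt W(u)\deq W(u)-W(t),~u\in[t,T]$, is a Brownian motion whose augmented natural filtration is exactly $\{\cF^t_u\}_{u\in[t,T]}$, with $dW=d\wt W$ on $[t,T]$. Hence the martingale representation theorem, the Burkholder--Davis--Gundy inequality, and all the a priori estimates behind \autoref{lmm:BSDE} are available with respect to $\dbF^t$ on $[t,T]$, so that \autoref{lmm:BSDE} and \autoref{thm-wellposedness} continue to hold verbatim when $\dbF$ is replaced throughout by $\dbF^t$ (and $[0,T]$ by $[t,T]$). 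Establishing this transfer carefully is the step I expect to require the most attention --- everything after it is bookkeeping.

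Second, I would record the measurability of the data: by the Remark following \ref{ass:F.1}, $X^{t,x}(\cd)\in L_{\dbF^t}^p(\Om;C([t,T];\dbR^d))$, so $\psi(s,X^{t,x}(T))$ is $\cF^t_T$-measurable and, for each fixed $(s,y,y',z)$, the map $\t\mapsto g(s,\t,X^{t,x}(\t),y,y',z)$ is $\dbF^t$-progressively measurable on $[t,T]$. With this in hand I would rerun the three-step construction from the proof of \autoref{thm-wellposedness}, now inside the $\dbF^t$-versions of the spaces $\cH^p$, $L_{\dbF^t}^p(\Om;C(\cd))$, etc. On a subinterval $[S,T]\subseteq[t,T]$ with $T-S$ small, the solution map $\Th$ of \rf{Th} is a contraction on the $\dbF^t$-version of $\cH^p[S,T]$; starting the Picard iteration from $(Y^0,Z^0)=0$ as in \rf{Picard} and solving at each step the associated family of BSDEs via the $\dbF^t$-version of \autoref{lmm:BSDE}, I would check by induction on $k$ that every iterate $(Y^k(\cd,\cd),Z^k(\cd,\cd))$ is $\dbF^t$-adapted --- the inductive step only needs that $X^{t,x}(\cd)$ and the diagonal process $Y^k(\t,\t)$ be $\dbF^t$-adapted, so that the effective generator $\t\mapsto g(s,\t,X^{t,x}(\t),\cd,Y^k(\t,\t),\cd)$ is $\dbF^t$-progressively measurable, exactly the mechanism used for Malliavin differentiation in the proof of \autoref{thm-EBSVIE-malliavan}. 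Passing to the limit in $\cH^p[S,T]$ (the $\dbF^t$-adapted fields forming a closed subset), the restriction of $(Y^{t,x},Z^{t,x})$ to $\D[S,T]$ is $\dbF^t$-adapted; solving the auxiliary family of BSDEs of Step 2 and inducting downward over $[t,T]$ as in Step 3 then yields an adapted solution $(\bar Y,\bar Z)$ of \rf{mebsvie} for which $\bar Y(s,\cd)$ and $\bar Z(s,\cd)$ are $\dbF^t$-adapted for every $s$.

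Finally, since $\dbF^t\subseteq\dbF$, this $(\bar Y,\bar Z)$ is \emph{a fortiori} an $\dbF$-adapted solution of \rf{mebsvie}, so by the uniqueness assertion in \autoref{thm-wellposedness} it must coincide with $(Y^{t,x},Z^{t,x})$; this gives that $\{(Y^{t,x}(s,r),Z^{t,x}(s,r));s\les r\les T\}$ is $\dbF^t$-adapted for each $s$, which is the claim. As noted, the only genuinely delicate point is the transfer of the well-posedness and estimate machinery from $\dbF$ to $\dbF^t$, which rests on the fact that $\{\cF^t_u\}_{u\in[t,T]}$ is the augmented filtration generated by the $\dbF^t$-Brownian motion $W(\cd)-W(t)$ on $[t,T]$; the rest of the argument is a routine re-run of \autoref{thm-wellposedness} (and the iteration technique of \autoref{thm-EBSVIE-malliavan}) in the smaller filtration.
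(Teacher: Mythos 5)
Your main argument is essentially the paper's: pass to the shifted Brownian motion $W^t(\cd)=W(\cd)-W(t)$, whose augmented natural filtration on $[t,T]$ is exactly $\{\cF^t_r\}$, solve the equation with the well-posedness machinery of \autoref{thm-wellposedness} relative to $\dbF^t$ (the paper does this by directly invoking \autoref{thm-wellposedness} for the equation driven by $W^t$; your Picard re-run spells out the same transfer in more detail), and then identify the resulting $\dbF^t$-adapted solution with $(Y^{t,x},Z^{t,x})$ via uniqueness in the larger filtration. That part is sound.

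There is, however, a gap in coverage: everything you construct (the contraction on $[S,T]\subseteq[t,T]$, the downward induction) lives on $r\in[s\vee t,T]$, whereas the proposition asserts adaptedness of $\{(Y^{t,x}(s,r),Z^{t,x}(s,r));\,s\les r\les T\}$, which for $s<t$ includes $r\in[s,t)$. For such $r$ one has $\cF^t_r=\cF_0$, the trivial $\si$-field up to null sets, so the claim there is that the solution is \emph{deterministic}; this does not follow from $\dbF^t$-well-posedness on $[t,T]$ and needs a separate argument. The paper supplies it: on $\D[0,t]$ the pair $(Y^{t,x}(s,r),Z^{t,x}(s,r))$ solves the equation with terminal datum $Y^{t,x}(s,t)$, which is $\cF^t_t=\cF_0$-measurable (hence a.s. constant) by the first part, and with $X^{t,x}(\t)\equiv x$ for $\t\les t$, so the generator is deterministic; uniqueness then forces $Z^{t,x}(s,r)\equiv 0$ and $Y^{t,x}(s,r)$ deterministic on $\D[0,t]$. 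You should append this step to obtain the full statement.
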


\begin{proof}
Note that $X^{t,x}(\cd)$ is  $\dbF^{t}$-adapted.
Define
\bel{bm}
W^t(s)\deq W(s)-W(t),\qq t\leq s\leq T,
\ee
which is an $\dbF^{t}$-adapted Brownian motion.
Consider the following EBSVIE:
\begin{align}
\nn \tilde Y^{t,x}(s,r)&=\psi(s,X^{t,x}(T)) + \int_r^T g(s,\t,X^{t,x}(\t),\tilde Y^{t,x}(s,\t),\tilde Y^{t,x}(\t,\t),\tilde Z^{t,x}(s,\t))d\t \\
\label{adpted-ebsvie} &\qq\qq\qq-\int_r^T \tilde Z^{t,x}(s,\t)dW^t(\t),\q s\in[0,T],~r\in[s\vee t,T].
\end{align}
By \autoref{thm-wellposedness} and note that $X^{t,x}(\cd)$ is  $\dbF^{t}$-adapted, the above EBSVIE admits a unique solution $(\tilde Y^{t,x}(s,r),\tilde Z^{t,x}(s,r));~s\in[0,T],~r\in[s\vee t,T]$.
And for any $s\in[0,T]$, $(\tilde Y^{t,x}(s,\cd),\tilde Z^{t,x}(s,\cd))$ is $\dbF^{t}$-adapted.
By \rf{bm}--\rf{adpted-ebsvie}, $(\tilde Y^{t,x}(\cd,\cd),\tilde Z^{t,x}(\cd,\cd))$ also satisfies the EBSVIE \rf{mebsvie} and is also $\dbF$-adapted.
By the uniqueness of the adapted solutions to EBSVIE \rf{mebsvie},  we have
\bel{y-tily-adpted-ebsvie}
Y^{t,x}(s,r)=\tilde Y^{t,x}(s,r),\q Z^{t,x}(s,r)=\tilde Z^{t,x}(s,r),~s\in[0,T],~r\in[s\vee t,T],
\ee
which means that for any $s\in[0,T]$, $(Y^{t,x}(s,r),Z^{t,x}(s,r));r\in[s\vee t,T]$ is $\dbF^{t}$-adapted.
Further, when $s<t$, $(Y^{t,x}(s,r),Z^{t,x}(s,r));s\leq r\leq t$ is also the unique adapted solution to the following EBSVIE:
\begin{align}
\nn  Y^{t,x}(s,r)&=Y^{t,x}(s,t) + \int_r^t g(s,\t,X^{t,x}(\t), Y^{t,x}(s,\t), Y^{t,x}(\t,\t), Z^{t,x}(s,\t))d\t \\
\label{adpted-ebsvie1} &\qq\qq\qq-\int_r^t  Z^{t,x}(s,\t)dW(\t),\q (s,r)\in\D[0,t].
\end{align}
Note that $Y^{t,x}(s,t)\in\cF^t_t=\cF_0$ and  $X^{t,x}(\t)\equiv x;0\leq \t\leq t$, thus EBSVIE \rf{adpted-ebsvie1} is a deterministic integral equation,
which implies that $ Z^{t,x}(s,r)\equiv 0\in \cF_0=\cF^t_r;(s,r)\in\D[0,t]$ and $Y^{t,x}(s,r); (s,r)\in\D[0,t]$ is a deterministic function.
Combining this with \rf{y-tily-adpted-ebsvie}, we have that for any $s\in[0,T)$, $(Y^{t,x}(s,r)$, $Z^{t,x}(s,r));s\leq r\leq T$ is $\dbF^{t}$-adapted.
\end{proof}


Now, we consider the regularity of $Y^{t,x}(s,r)$.
\begin{taggedassumption}{(B.5)}\label{ass:B5}
Let the generator $g:[0,T]^2\times\dbR^d\times\dbR^m\times\dbR^m\times\dbR^{m\times d}\to\dbR^m$ and the free term $\psi:[0,T]\times\dbR^d\to\dbR^m$ satisfy {\rm \ref{ass:B4}}.
There exists a modulus of continuity $\rho:[0,\i)\to[0,\i)$ (a continuous and monotone increasing function with $\rho(0)=0$) such that
\begin{align*}
&|g(t_1,s,x,y,y',z)-g(t_2,s,x,y,,y',z)|\les\rho(|t_1-t_2|),\\
&\qq\qq\qq \forall~t_1,t_2,s\in[0,T],~(x,y,y',z)\in\dbR^d\times\dbR^m\times\dbR^m\times\dbR^{m\times d},\\
&|\psi(t_1,x)-\psi(t_2,x)|\les\rho(|t_1-t_2|),\q\forall~ t_1,t_2\in[0,T], ~x\in\dbR^d.
\end{align*}
\end{taggedassumption}
\begin{theorem}\label{theorem-Y-regular}
Let {\rm \ref{ass:F.1}} and {\rm \ref{ass:B5}} hold, then
$\{Y^{t,x}(s,r);t\in[0,T],x\in\dbR^d,(s,r)\in\D[0,T]\}$ has a version whose trajectories belong to $C^{0,0,0,2}([0,T]\times\D[0,T]\times\dbR^d;\dbR^m)$.
\end{theorem}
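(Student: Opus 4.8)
The plan is to show that $x\mapsto Y^{t,x}(s,r)$ is twice continuously differentiable, with first and second derivatives given respectively by the linear variational EBSVIE \rf{diff-MEBSVIE-Y} and by a further $x$-differentiation of it, and then that $Y^{t,x}(s,r)$ together with $\nabla_xY^{t,x}(s,r)$ and $\nabla_x^2Y^{t,x}(s,r)$ depends continuously on $(t,s,r)$; a Kolmogorov-type continuity criterion in the joint variable $(t,s,r,x)$ then produces the asserted $C^{0,0,0,2}$ version. The ingredients are the SDE estimates of \autoref{lmm:well-posedness-SDE} and \autoref{diff-SDE} (which I would first upgrade to second-order $x$-derivatives: since $b,\si\in C^3_b$ under {\rm \ref{ass:F.1}}, $\nabla^2X^{t,x}$ solves a linear SDE driven by $\nabla X^{t,x}$ and enjoys the analogous $L^p$-moment and joint-continuity bounds); the well-posedness and stability estimates \rf{EBSVIE-estimate}--\rf{EBSVIE-sta-estimate} of \autoref{thm-wellposedness}; the $L^p(\Om;C([0,T];\dbR^m))$-estimate of the diagonal \rf{pro-EBSVIE-supestimate} in \autoref{thm-y-continuity}; the Malliavin results of \autoref{thm-EBSVIE-malliavan} and \autoref{Mallivan-MEBSVIE}; and the identity $Z^{t,x}(s,r)=\nabla Y^{t,x}(s,r)(\nabla X^{t,x}(r))^{-1}\si(r,X^{t,x}(r))$ of \autoref{Z(t,s)-repre} together with the bound of \autoref{z-estimate}.

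\textbf{Differentiability in $x$.} For $h\neq 0$ and $1\les i\les d$, form the difference quotient $U^{i,h}(s,r)\deq h^{-1}[Y^{t,x+he_i}(s,r)-Y^{t,x}(s,r)]$ and write the EBSVIE it satisfies, linearizing the generator and the terminal datum $\psi(s,\cd)$ by the mean value theorem, so that the coefficients are averaged partial derivatives $g_x,g_y,g_{y'},g_{z_j}$ (all bounded by $L$ under {\rm \ref{ass:B4}}) evaluated along convex combinations. Subtracting \rf{diff-MEBSVIE-Y} and invoking the stability estimate \rf{EBSVIE-sta-estimate} reduces the convergence $U^{i,h}\to\nabla^iY^{t,x}$ in $\cH^p[0,T]$ to three points: the $L^p$-convergence $\D^i_hX^{t,x}\to\nabla^iX^{t,x}$ from \autoref{lmm:well-posedness-SDE} and \autoref{diff-SDE}; the dominated convergence of the linearized-coefficient differences, using continuity of $g_x,g_y,g_{y'},g_{z_j},\psi_x$ and the moment bounds on $(Y^{t,x},Z^{t,x})$; and control of the diagonal term $h^{-1}[Y^{t,x+he_i}(u,u)-Y^{t,x}(u,u)]$, for which I would use \rf{pro-EBSVIE-supestimate} applied to the linearized diagonal equation to pass to the limit uniformly in $u$ and identify the limit with $\nabla^iY^{t,x}(u,u)$. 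Well-posedness of \rf{diff-MEBSVIE-Y} itself is \autoref{thm-wellposedness} applied to its linear generator, whose free term $\psi_x\nabla X^{t,x}(T)+\int g_x\nabla X^{t,x}$ has the required moments by \autoref{lmm:well-posedness-SDE}. Exactly the same scheme, one level up, yields the second derivative: $x$-differentiating \rf{diff-MEBSVIE-Y}, whose data depend on $x$ only through $(X^{t,x},Y^{t,x},Z^{t,x})$, the last handled by $x$-differentiating the representation in \autoref{Z(t,s)-repre}, produces a linear EBSVIE for $\nabla^2Y^{t,x}(s,r)$ with the same bounded coefficients $g_y,g_{y'},g_{z_j}$ and an inhomogeneous term that is a finite sum of products of already-controlled first-order data ($\nabla X^{t,x},\nabla Y^{t,x},\nabla Z^{t,x}$), second derivatives ($g_{xx},g_{xy},\dots,\psi_{xx}$), and $\nabla^2X^{t,x}$. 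This equation is well-posed in $\cH^p[0,T]$ by \autoref{thm-wellposedness}, and the difference-quotient argument for $h^{-1}[\nabla^iY^{t,x+he_j}-\nabla^iY^{t,x}]$, again using \rf{pro-EBSVIE-supestimate} on the diagonal term $\nabla^2Y^{t,x}(u,u)$, identifies its solution as $\nabla^2Y^{t,x}(s,r)$.

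\textbf{Joint continuity and conclusion.} It remains to establish joint continuity of $Y^{t,x}(s,r)$, $\nabla Y^{t,x}(s,r)$, $\nabla^2Y^{t,x}(s,r)$ in $(t,s,r,x)$. Continuity in $r$ is the sample-path continuity built into the space $\cH^p[0,T]$ (and, for the $Z$-type quantities appearing, \autoref{pro-Mallivan-diff-MEBSVIE}). Continuity in $s$, uniform in $r\ges s$, follows by repeating the argument of \autoref{thm-y-continuity}: {\rm \ref{ass:B5}}, together with the joint continuity of $g_x,g_{xx},\psi_x,\psi_{xx}$ inherited from {\rm \ref{ass:B4}} (the uniform third-order bounds make the difference-quotient approximation of these derivatives uniform, and uniform limits of jointly continuous functions are jointly continuous), yields, almost surely, a modulus-of-continuity control $\sup_r|Y^{t,x}(s,r)-Y^{t,x}(s',r)|\les C\rho(|s-s'|)$ and likewise for the two variational equations. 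Continuity in $t$ reduces, through \rf{EBSVIE-sta-estimate} and the linear variational equations, to the $t$-Hölder bounds $\dbE[\sup_u|X^{t,x}(u)-X^{t',x}(u)|^p]\les C_p(1+|x|^p)|t-t'|^{p/2}$ and their $\nabla$- and $\nabla^2$-analogues of \autoref{lmm:well-posedness-SDE}. Finally, continuity in $x$ of all three objects, with a Hölder modulus, comes from the same combination of \autoref{lmm:well-posedness-SDE} and \rf{EBSVIE-sta-estimate}. Since {\rm \ref{ass:B4}}--{\rm \ref{ass:B5}} hold with constants independent of $p$, I may take $p$ as large as needed, so Kolmogorov's continuity criterion applied in the joint parameter $(t,s,r,x)$ furnishes a version with trajectories in $C^{0,0,0,2}([0,T]\times\D[0,T]\times\dbR^d;\dbR^m)$.

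\textbf{Main obstacle.} I expect the principal difficulty to be the diagonal coupling. At each level of differentiation the variational EBSVIE carries the on-diagonal term $\nabla^kY^{t,x}(u,u)$, whose use requires, on the one hand, that the corresponding variational equation already be known to have a solution continuous up to the diagonal and, on the other hand, that limits in the difference quotients be passed uniformly on the diagonal, which is precisely what the $L^p(\Om;C([0,T];\dbR^m))$-estimate \rf{pro-EBSVIE-supestimate} was designed to provide. Keeping this bootstrap coherent, that is, securing just enough continuity of $\nabla^kY^{t,x}$ to make its diagonal restriction meaningful before feeding it into the next equation, is the delicate part of the bookkeeping; the preliminary upgrade of \autoref{lmm:well-posedness-SDE} and \autoref{diff-SDE} to second-order $x$-derivatives of $X^{t,x}$ is routine by comparison given {\rm \ref{ass:F.1}}.
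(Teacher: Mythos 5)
Your proposal is correct in substance and shares the paper's core machinery --- difference quotients in $x$, the stability estimate \rf{EBSVIE-sta-estimate}, the diagonal estimate \rf{pro-EBSVIE-supestimate} to control the on-diagonal coupling $Y^{t,x}(u,u)$, the SDE bounds of \autoref{lmm:well-posedness-SDE}, and a Kolmogorov-type continuity argument at the end --- but it routes the second derivative differently. The paper never writes a second-order variational equation: after bounding $\D_h^iY^{t,x}$ uniformly in $h$ (your first-derivative step), it establishes the Lipschitz-in-$(t,x,h)$ Cauchy estimates \rf{thm-regular-DDY}--\rf{thm-regular-DDZ1} for $\D_h^iY^{t,x}-\D_{h'}^iY^{t',x'}$, so that differentiability to first and (by iterating) second order, together with joint continuity of the derivatives, all fall out of the single family of difference-quotient equations plus Kolmogorov's lemma in the parameters $(t,x,h)$; the identification of the limits with the solution of \rf{diff-MEBSVIE-Y} is deferred to \autoref{corollory-yz}. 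Your plan instead constructs $\nabla Y$ and $\nabla^2Y$ as solutions of explicit first- and second-order variational EBSVIEs and then identifies them with the difference-quotient limits. That buys a cleaner statement (the derivatives come equipped with their own well-posed equations, which is what \autoref{corollory-yz} and the PDE section ultimately need) at the cost of two verifications the paper's route avoids: (i) $L^p$ bounds on $\nabla^2X^{t,x}$, which are not in \autoref{lmm:well-posedness-SDE} (the paper only ever uses $\D_h^iX$ and $\D_h^iX-\D_{h'}^iX$); and (ii) the inhomogeneous term of your second-order equation contains products such as $g_{z_jz_l}\nabla Z_j\nabla Z_l$ and $g_{yz}\nabla Y\,\nabla Z$, so checking hypothesis \rf{B1-g(t,s)} for that equation requires the higher moment bounds $\dbE\big(\int_s^T|\nabla Z^{t,x}(s,u)|^2du\big)^{p}<\i$ inherited from the first-order equation. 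On point (ii), be careful that ``$x$-differentiating the representation in \autoref{Z(t,s)-repre}'' does not smuggle $\nabla^2Y$ into the coefficients: $\nabla_xZ$ enters the second-order equation only through the chain rule applied to the frozen coefficients, where it is the already-constructed second component of the solution of \rf{diff-MEBSVIE-Y}, while $\nabla^2Z$ appears solely as the unknown martingale integrand, so there is no circularity --- but this deserves an explicit sentence. With those points attended to, your bootstrap through the diagonal (which you correctly single out as the delicate step) matches what the paper does implicitly via \rf{thm-regular-DY-DZ-sup} and \rf{thm-regular-DDY1}.
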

\begin{proof}
For any $(t,x)\in[0,T]\times\dbR^d$, let $X^{t,x}(s)=X^{t,x}(s\vee t);0\leq s\leq T$,
and $(Y^{t,x}(\cd),Z^{t,x}(\cd))\in\cH^p[0,T]$ be the unique adapted solution to EBSVIE \rf{mebsvie}  on [0,T].
By \autoref{lmm:well-posedness-SDE} and \rf {EBSVIE-estimate} in \autoref{thm-wellposedness}, we have
\begin{align}
\nn& \sup_{s\in[0,T]}\dbE\Big[\sup_{r\in[s,T]}|Y^{t,x}(s,r)|^p\Big]+\sup_{s\in[0,T]}\dbE\Big[\int_s^T|Z^{t,x}(s,r)|^2 dr\Big]^{p\over 2}\\
\nn&\q \leq  C_p\sup_{s\in[0,T]}\dbE |\psi(s,X^{t,x}(T))|^p+C_p\sup_{s\in[0,T]}\dbE\Big[\int_s^T|g(s,r,X^{t,x}(r),0,0,0)|dr\Big]^p\\
\nn&\q \leq  C_p\dbE\Big[1+\sup_{s\in[0,T]}|X^{t,x}(s)|^p\Big] \\
\label{thm-regular-estimate-y-z}&\q \leq C_p(1+|x|^p).
\end{align}
Further, let
\begin{align}
g_1(s,r,y,y',z)&=g(s,r,0,y,y',z),\\
g_2(s,r,y,y',z)&=g(s,r,X^{t,x}(r),y,y',z)-g(s,r,0,y,y',z).
\end{align}
Note that $g(\cd)=g_1(\cd)+g_2(\cd)$ and
\begin{align}
\nn&\dbE\Big [\sup_{s\in[0,T]}|\psi(s,X^{t,x}(T))|^p\Big]+\sup_{s\in[0,T]}\int_s^T|g_1(s,r,0,0,0)|dr
+\dbE\Big[\sup_{s\in[0,T]}\int_s^T|g_2(s,r,0,0,0)|^pdr\Big]\\
\nn&\q=\dbE\Big [\sup_{s\in[0,T]}|\psi(s,X^{t,x}(T))|^p\Big]+\Big[\sup_{s\in[0,T]}\int_s^T|g(s,r,0,0,0,0)|dr\Big]^p\\
\nn& \qq+\dbE\Big[\sup_{s\in[0,T]}\int_s^T|g(s,r,X^{t,x}(r),0,0,0)-g(s,r,0,0,0,0)|^pdr\Big]\\
\label{thm-regular-estimate-y-z2}&\q \leq C_p\dbE\Big [1+\sup_{s\in[0,T]}|X^{t,x}(s)|^p\Big]
\leq C_p(1+|x|^p).
\end{align}
Combining this with \rf{pro-EBSVIE-supestimate} in \autoref{thm-y-continuity}, we have
\bel{thm-regular-estimate-y-z3}
\dbE\Big[\sup_{s\in[0,T]}|Y^{t,x}(s,s)|^p\Big]\leq C_p(1+|x|^p).
\ee
For any $(t',x')\in[0,T]\times\dbR^d$, define $X^{t',x'}(\cd),Y^{t',x'}(\cd),Z^{t',x'}(\cd)$ as before.
By \autoref{lmm:well-posedness-SDE}, \autoref{thm-wellposedness} and \autoref{thm-y-continuity}, similar to \rf{thm-regular-estimate-y-z} and \rf{thm-regular-estimate-y-z3}, we have
\begin{align}
\nn& \sup_{s\in[0,T]}\dbE\Big[\sup_{r\in[s,T]}|Y^{t,x}(s,r)-Y^{t',x'}(s,r)|^p\Big]+\sup_{s\in[0,T]}\dbE\Big[\int_s^T|Z^{t,x}(s,r)-Z^{t',x'}(s,r)|^2 dr\Big]^{p\over 2}\\
\nn& \q\leq  C_p\bigg\{\sup_{s\in[0,T]}\dbE\Big[\int_s^T|g(s,r,X^{t,x}(r),Y^{t,x}(s,r),Y^{t,x}(r,r),Z^{t,x}(s,r))\\
\nn&\qq\qq\qq\qq-g(s,r,X^{t',x'}(r),Y^{t,x}(s,r),Y^{t,x}(r,r),Z^{t,x}(s,r))|dr\Big]^p \\
\nn&\qq\qq+\sup_{s\in[0,T]}\dbE |\psi(s,X^{t,x}(T))-\psi(s,X^{t',x'}(T))|^p\bigg\}\\
\label{thm-regular-sta-estimate-y-z}&\q\leq C_p\dbE\Big[\sup_{0\les s\les T}|X^{t,x}(s)-X^{t^\prime,x^\prime}(s)|^p\Big]\leq C_p(1+|x|^p)(|x-x^\prime|^p+|t-t^\prime|^{{p\over 2}}),
\end{align}
and
\begin{align}
\nn& \dbE\Big[\sup_{s\in[0,T]}|Y^{t,x}(s,s)-Y^{t',x'}(s,s)|^p\Big]\\
\nn& \q\leq  C_p\bigg\{\dbE\Big[\sup_{s\in[0,T]}\int_s^T|g(s,r,X^{t,x}(r),Y^{t,x}(s,r),Y^{t,x}(r,r),Z^{t,x}(s,r))\\
\nn&\qq\qq\qq\qq-g(s,r,X^{t',x'}(r),Y^{t,x}(s,r),Y^{t,x}(r,r),Z^{t,x}(s,r))|^p dr\Big] \\
\nn&\qq\qq+\dbE\Big[\sup_{s\in[0,T]}|\psi(s,X^{t,x}(T))-\psi(s,X^{t',x'}(T))|^p\Big]\bigg\}\\
\label{thm-regular-sta-supestimate-y1}&\q\leq C_p\dbE\Big[\sup_{0\les s\les T}|X^{t,x}(s)-X^{t^\prime,x^\prime}(s)|^p\Big]\leq C_p(1+|x|^p)(|x-x^\prime|^p+|t-t^\prime|^{{p\over 2}}).
\end{align}
We note that \rf{thm-regular-sta-estimate-y-z} implies that for any
fixed $(s,x)\in[0,T)\times\dbR^d$, $ Y^{t,x}(s,t);s\leq t\leq T$ is continuous
and \rf{thm-regular-sta-supestimate-y1} implies that for any
fixed $x\in\dbR^d$, $s \mapsto Y^{s,x}(s,s)$ is continuous.
Next, for any $h\neq 0$, we consider
\begin{align}
\nn &\D_h^i Y^{t,x}(s,r)\deq h^{-1} \big[Y^{t,x+he_i}(s,r)-Y^{t,x}(s,r)\big]\\
\nn &\q =h^{-1}[\psi(s,X^{t,x+he_i}(T))-\psi(s,X^{t,x}(T))]\\
\nn &\qq+\int_r^T h^{-1}\big[g(s,\t,X^{t,x+he_i}(\t),Y^{t,x+he_i}(s,\t),Y^{t,x+he_i}(\t,\t),Z^{t,x+he_i}(s,\t))\\
\nn &\qq\qq\qq-g(s,\t,X^{t,x}(\t),Y^{t,x}(s,\t),Y^{t,x}(\t,\t),Z^{t,x}(s,\t))\big]d\t \\
\nn &\qq-\int_r^T h^{-1} \big[Z^{t,x+he_i}(s,\t)- Z^{t,x}(s,\t)\big]dW(\t)\\
\nn &\q=\int_0^1\psi_x(s,X^{t,x}(T)+\l h\D_h^iX^{t,x}(T))\D_h^iX^{t,x}(T)d\l\\
\nn &\qq+\int_r^T\int_0^1 \Big[g_x(\Xi^{t,x,h}_\l(s,\t))\D_h^iX^{t,x}(\t)+g_y(\Xi^{t,x,h}_\l(s,\t))\D_h^iY^{t,x}(s,\t)\\
\nn &\qq\qq\q +g_{y^\prime}(\Xi^{t,x,h}_\l(s,\t))\D_h^iY^{t,x}(\t,\t)+g_z(\Xi^{t,x,h}_\l(s,\t))\D_h^i Z^{t,x}(s,\t)\Big]d\l d\t \\
\label{thm-regular-sta-supestimate-Dy} &\qq -\int_r^T \D_h^iZ^{t,x}(s,\t)dW(\t),
\end{align}
where
\begin{align*}
\Xi^{t,x,h}_\l(s,\t)&=\Big(s,\t,X^{t,x}(\t)+\l h\D_h^iX^{t,x}(\t),Y^{t,x}(s,\t)+\l h\D_h^iY^{t,x}(s,\t),\\
&\qq\q Y^{t,x}(\t,\t)+\l h\D_h^iY^{t,x}(\t,\t),Z^{t,x}(s,\t)+\l h\D_h^i Z^{t,x}(s,\t)\Big),~(s,\t)\in\D[0,T].
\end{align*}
By \rf{lemma2.1.1} in \autoref{lmm:well-posedness-SDE}, \autoref{thm-wellposedness} and \autoref{thm-y-continuity},  we have
\bel{thm-regular-DY-DZ}
\sup_{s\in[0,T]}\dbE\Big[\sup_{r\in[s,T]}|\D_h^i Y^{t,x}(s,r)|^p\Big]+\sup_{s\in[0,T]}\dbE\Big[\int_s^T|\D_h^i Z^{t,x}(s,r)|^2 dr\Big]^{p\over 2}\leq C_p,
\ee
and
\bel{thm-regular-DY-DZ-sup}
\dbE\Big[\sup_{s\in[0,T]}|\D_h^i Y^{t,x}(s,s)|^p\Big]\leq C_p.
\ee
This means for any $(t,s,r)\in[0,T]\times\D[0,T]$, $Y^{t,x}(s,r)$ is differentiable in $x$.
Finally, we consider
\begin{align}
\nn &\D_h^i Y^{t,x}(s,r)-\D_{h'}^i Y^{t',x'}(s,r)\\
\nn &\q=\int_0^1\psi_x(s,X^{t,x}(T)+\l h\D_h^iX^{t,x}(T))\D_h^iX^{t,x}(T)d\l\\
\nn &\qq-\int_0^1\psi_x(s,X^{t',x'}(T)+\l h'\D_{h'}^iX^{t',x'}(T))\D_{h'}^iX^{t',x'}(T)d\l\\
\nn &\qq+\int_r^T\int_0^1 \Big[g_x(\Xi^{t,x,h}_\l(s,\t))\D_h^iX^{t,x}(\t)-g_x(\Xi^{t',x',h'}_\l(s,\t))\D_{h'}^iX^{t',x'}(\t)\Big]d\l d\t\\
\nn &\qq+\int_r^T\int_0^1 \Big[g_y(\Xi^{t,x,h}_\l(s,\t))\D_h^iY^{t,x}(s,\t)-g_y(\Xi^{t',x',h'}_\l(s,\t))\D_{h'}^iY^{t',x'}(s,\t)\Big]d\l d\t\\
\nn &\qq+\int_r^T\int_0^1 \Big[g_{y'}(\Xi^{t,x,h}_\l(s,\t))\D_h^iY^{t,x}(\t,\t)-g_{y'}(\Xi^{t',x',h'}_\l(s,\t))\D_{h'}^iY^{t',x'}(\t,\t)\Big]d\l d\t\\
\nn &\qq+\int_r^T\int_0^1 \Big[g_z(\Xi^{t,x,h}_\l(s,\t))\D_h^iZ^{t,x}(s,\t)-g_z(\Xi^{t',x',h'}_\l(s,\t))\D_{h'}^i Z^{t',x'}(s,\t)\Big]d\l d\t\\
\label{thm-regular-sta-supestimate-DDy} &\qq -\int_r^T \big[\D_h^i Z^{t,x}(s,\t)-\D_{h'}^i Z^{t',x'}(s,\t)\big]dW(\t).
\end{align}
Similar to \rf{thm-regular-sta-estimate-y-z} and \rf{thm-regular-sta-supestimate-y1}, by \autoref{lmm:well-posedness-SDE}, \autoref{thm-wellposedness} and \autoref{thm-y-continuity},
we have
\begin{align}
\nn &\sup_{s\in[0,T]}\dbE\Big[\sup_{r\in[s,T]}|\D_h^i Y^{t,x}(s,r)-\D_{h'}^i Y^{t',x'}(s,r)|^p\Big]+\dbE\Big[\sup_{s\in[0,T]}|\D_h^i Y^{t,x}(s,s)-\D_{h'}^i Y^{t',x'}(s,s)|^p\Big]\\
\label{thm-regular-DDY}&\q\leq C_p(1+|x|^p+|h|^p+|x'|^p+|h'|^p)(|x-x'|^p+|h-h'|^p+|t-t'|^{p\over 2}),
\end{align}
and
\begin{align}
\nn&\sup_{s\in[0,T]}\dbE\Big[\int_s^T|\D_h^i Z^{t,x}(s,r)-\D_{h'}^i Z^{t',x'}(s,r)|^2 dr\Big]^{p\over 2}\\
\label{thm-regular-DDZ}&\q \leq C_p(1+|x|^p+|h|^p+|x'|^p+|h'|^p)(|x-x'|^p+|h-h'|^p+|t-t'|^{p\over 2}).
\end{align}
Similar to \rf{thm-regular-DY-DZ} and \rf{thm-regular-DY-DZ-sup}, we have
\begin{align}
\nn &\sup_{s\in[0,T]}\dbE\Big[\sup_{r\in[s,T]}|\D_h^i Y^{t,x}(s,r)-\D_{h'}^i Y^{t,x'}(s,r)|^p\Big]+\dbE\Big[\sup_{s\in[0,T]}|\D_h^i Y^{t,x}(s,s)-\D_{h'}^i Y^{t,x'}(s,s)|^p\Big]\\
\label{thm-regular-DDY1}&\q\leq C_p(|x-x'|^p+|h-h'|^p),
\end{align}
and
\bel{thm-regular-DDZ1}
\sup_{s\in[0,T]}\dbE\Big[\int_s^T|\D_h^i Z^{t,x}(s,r)-\D_{h'}^i Z^{t,x}(s,r)|^2 dr\Big]^{p\over 2}
 \leq C_p(|x-x'|^p+|h-h'|^p).
\ee
This means for any $(t,s,r)\in[0,T]\times\D[0,T]$, $Y^{t,x}(s,r)$ is twice differentiable in $x$.
\end{proof}

\begin{corollary}\label{corollory-yz}
Let $\{( \nabla Y^{t,x}(s,r), \nabla Z^{t,x}(s,r));0\leq s\leq r\leq T\}$ be the unique adapted solution to EBSVIE \rf{diff-MEBSVIE-Y},
then  $\{( \nabla Y^{t,x}(s,r),  \nabla Z^{t,x}(s,r));0\leq s\leq r\leq T\}$ is the gradient of  $\{( Y^{t,x}(s,r),Z^{t,x}(s,r));0\leq s\leq r\leq T\}$ respect to $x$.
\end{corollary}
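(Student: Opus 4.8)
The plan is to compare the difference quotients of $(Y^{t,x},Z^{t,x})$ in the $x$-variable with the solution $(\nabla Y^{t,x},\nabla Z^{t,x})$ of the variational EBSVIE \rf{diff-MEBSVIE-Y}, which is precisely the formal $x$-derivative of the Markovian EBSVIE \rf{mebsvie}. Fix $(t,x)\in[0,T)\times\dbR^d$ and $i\in\{1,\dots,d\}$, and write $\nabla_i$ for the $i$-th column of $\nabla$. Recall from the proof of \autoref{theorem-Y-regular} that $(\D_h^i Y^{t,x},\D_h^i Z^{t,x})$ solves the linear EBSVIE \rf{thm-regular-sta-supestimate-Dy}, whose coefficients are $g_x,g_y,g_{y'},g_z$ evaluated at $\Xi^{t,x,h}_\l$ (bounded by $L$ under \ref{ass:B4}, so this is a genuine EBSVIE of the form \rf{ebsvie-I}) and whose free term and inhomogeneity are built from $\D_h^i X^{t,x}$; the uniform-in-$h$ bounds \rf{thm-regular-DY-DZ}--\rf{thm-regular-DY-DZ-sup} are also available there. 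Likewise $(\nabla_i Y^{t,x},\nabla_i Z^{t,x})$ is the adapted solution of the $i$-th column of \rf{diff-MEBSVIE-Y}, which is the same type of linear EBSVIE with coefficients $g_x,g_y,g_{y'},g_z$ evaluated at $\Xi^{t,x,0}(s,\t):=(s,\t,X^{t,x}(\t),Y^{t,x}(s,\t),Y^{t,x}(\t,\t),Z^{t,x}(s,\t))$ and inhomogeneity built from $\nabla_i X^{t,x}$. It therefore suffices to show $\|(\D_h^i Y^{t,x}-\nabla_i Y^{t,x},\,\D_h^i Z^{t,x}-\nabla_i Z^{t,x})\|_{\cH^p[0,T]}\to 0$ as $h\to 0$: the full limit then gives $\lim_{h\to 0}\D_h^i Y^{t,x}=\nabla_i Y^{t,x}$ and $\D_h^i Z^{t,x}\to\nabla_i Z^{t,x}$ in the $Z$-norm, i.e.\ $\partial_{x_i}Y^{t,x}=\nabla_i Y^{t,x}$ and $\partial_{x_i}Z^{t,x}=\nabla_i Z^{t,x}$, and letting $i$ range over $\{1,\dots,d\}$ yields the corollary.

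To obtain this convergence I would subtract the $i$-th column of \rf{diff-MEBSVIE-Y} from \rf{thm-regular-sta-supestimate-Dy}: the difference solves an EBSVIE of the form \rf{ebsvie-I}, and the stability estimate \rf{EBSVIE-sta-estimate} of \autoref{thm-wellposedness} bounds its $\cH^p[0,T]$-norm by $C_p$ times the sum of the $L^p$-size of the free-term difference $\int_0^1\psi_x(s,X^{t,x}(T)+\l h\D_h^iX^{t,x}(T))\D_h^iX^{t,x}(T)\,d\l-\psi_x(s,X^{t,x}(T))\nabla_i X^{t,x}(T)$ and the size, evaluated along $(\D_h^i Y^{t,x},\D_h^i Z^{t,x})$, of the generator difference. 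The latter is the sum of the inhomogeneity difference $\int_0^1[g_x(\Xi^{t,x,h}_\l(s,\t))\D_h^iX^{t,x}(\t)-g_x(\Xi^{t,x,0}(s,\t))\nabla_i X^{t,x}(\t)]\,d\l$ and three terms obtained by pairing each coefficient difference $g_\bullet(\Xi^{t,x,h}_\l(s,\t))-g_\bullet(\Xi^{t,x,0}(s,\t))$, $\bullet\in\{y,y',z\}$, with $\D_h^i Y^{t,x}(s,\t)$, $\D_h^i Y^{t,x}(\t,\t)$, and $\D_h^i Z^{t,x}(s,\t)$ respectively.

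Each of these pieces is $O(|h|)$ in the relevant norm. First, $\D_h^iX^{t,x}(s)\to\nabla_i X^{t,x}(s)$ with $\sup_{s}\dbE|\D_h^iX^{t,x}(s)-\nabla_i X^{t,x}(s)|^p\le C_p|h|^p$, by the Cauchy estimate of \autoref{lmm:well-posedness-SDE} together with \autoref{diff-SDE}. Second, since $g$ and $\psi$ are of class $C^3_b$ under \ref{ass:B4}, the derivatives $g_x,g_y,g_{y'},g_z,\psi_x$ are globally Lipschitz with constant $L$, so that $|g_\bullet(\Xi^{t,x,h}_\l(s,\t))-g_\bullet(\Xi^{t,x,0}(s,\t))|\le L|h|\big(|\D_h^iX^{t,x}(\t)|+|\D_h^iY^{t,x}(s,\t)|+|\D_h^iY^{t,x}(\t,\t)|+|\D_h^iZ^{t,x}(s,\t)|\big)$ and $|\psi_x(s,X^{t,x}(T)+\l h\D_h^iX^{t,x}(T))-\psi_x(s,X^{t,x}(T))|\le L|h||\D_h^iX^{t,x}(T)|$. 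Inserting these, invoking the uniform-in-$h$ bounds \rf{thm-regular-DY-DZ}--\rf{thm-regular-DY-DZ-sup} (applied, if necessary, with a larger exponent), and using H\"older's inequality and the Cauchy--Schwarz inequality to treat the cross terms such as $\int_s^T|\D_h^iZ^{t,x}(s,\t)||\D_h^iY^{t,x}(s,\t)|\,d\t$, one finds that the whole right-hand side of \rf{EBSVIE-sta-estimate} is at most $C_p|h|^p$. Letting $h\to 0$ finishes the argument.

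The main obstacle is the $Z$-component: there is no independent way to see that $\D_h^i Z^{t,x}$ converges, so the proof must be organized so that this convergence is \emph{produced} by the EBSVIE stability estimate \rf{EBSVIE-sta-estimate}, which controls $\dbE[\int_s^T|Z_1-Z_2|^2d\t]^{p/2}$ solely through the $Y$-difference and the generator difference. A secondary point of care is checking that the perturbed arguments $\Xi^{t,x,h}_\l$ approach $\Xi^{t,x,0}$ with rate $O(|h|)$ — which uses the uniform $L^p$-bounds for $\D_h^i Y^{t,x}$ and $\D_h^i Z^{t,x}$, not merely for $\D_h^i X^{t,x}$ — and keeping the non-local diagonal term $\D_h^i Y^{t,x}(\t,\t)$ under control throughout, via \rf{thm-regular-DY-DZ-sup}.
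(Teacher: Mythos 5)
Your proposal is correct. The paper in fact states this corollary without any proof, but the argument you give — comparing the difference-quotient EBSVIE \rf{thm-regular-sta-supestimate-Dy} with the variational EBSVIE \rf{diff-MEBSVIE-Y} via the stability estimate \rf{EBSVIE-sta-estimate}, using the Lipschitz continuity of $g_x,g_y,g_{y'},g_z,\psi_x$ from \ref{ass:B4}, the rate $O(|h|)$ for $\D_h^iX^{t,x}-\nabla_iX^{t,x}$ from \autoref{lmm:well-posedness-SDE}, and the uniform-in-$h$ bounds \rf{thm-regular-DY-DZ}--\rf{thm-regular-DY-DZ-sup} with a larger exponent to absorb the quadratic cross terms in $\D_h^iZ^{t,x}$ — is exactly the natural completion of the machinery set up in the proof of \autoref{theorem-Y-regular}, and it correctly identifies the gradient with the solution of \rf{diff-MEBSVIE-Y} by uniqueness.
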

\section{EBSVIEs and Parabolic PDEs}\label{Sec:EBSVIE-PDE}
Now, we are ready to relate the EBSVIE \rf{mebsvie} to the the following systems of parabolic
partial differential equations:
\bel{FBTPDE1}\left\{\begin{aligned}
&\Th_s(t,s,x)+{1\over 2}\si(s,x)^\prime\Th_{xx}(t,s,x)\si(s,x)+\Th_x(t,s,x)b(s,x)\\
&\qq+g(t,s,x,\Th(t,s,x),\Th(s,s,x),\Th_x(t,s,x)\si(s,x))=0,\q (t,s,x)\in\D[0,T]\times\dbR^d,\\
&\Th(t,T,x)= \psi(t,x),\qq (t,x)\in[0,T]\times\dbR^d.
\end{aligned}\right.\ee
We first give a result which is similar to \cite[Theorem 3.1]{Wang--Yong 2018}.

\begin{theorem}\label{PDE-REP-BSVIE}
If $\Th(\cd,\cd,\cd)\in C^{0,1,2}(\D[0,T]\times\dbR^d;\dbR^m)$ is a classical solution of the PDEs \rf{FBTPDE1}, then
\bel{PDE-REP-BSVIE-re}
\big(Y^{t,x}(s,r),Z^{t,x}(s,r)\big)\deq\big(\Th(s,r,X^{t,x}(r)),\Th_x(s,r,X^{t,x}(r))\si(r,X^{t,x}(r))\big);~ (s,r)\in\D[ t,T]
\ee
is an adapted solution to EBSVIE \rf{mebsvie} on $[t,T]$.
\end{theorem}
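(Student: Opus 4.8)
The plan is to verify the Ansatz \rf{PDE-REP-BSVIE-re} directly, via It\^o's formula, exactly in the spirit of the nonlinear Feynman--Kac argument of Pardoux--Peng \cite{Pardoux--Peng 1992} and of \cite[Theorem 3.1]{Wang--Yong 2018}. First I would fix $(t,x)\in[0,T)\times\dbR^d$, let $X^{t,x}(\cd)$ be the unique solution of the SDE \rf{SDET} (recall the convention $X^{t,x}(\rho)=X^{t,x}(\rho\vee t)$), and define $(Y^{t,x}(s,r),Z^{t,x}(s,r))$ by \rf{PDE-REP-BSVIE-re}. Properties (i)--(ii) in the definition of an adapted solution then come for free: since $\Th$ and $\Th_x$ are continuous and $X^{t,x}(\cd)$ is continuous and $\dbF$-adapted, for each fixed $s$ the map $r\mapsto Y^{t,x}(s,r)$ is continuous and $\dbF$-progressively measurable and $r\mapsto Z^{t,x}(s,r)$ is $\dbF$-progressively measurable; in particular the diagonal process $r\mapsto Y^{t,x}(r,r)=\Th(r,r,X^{t,x}(r))$ is well-defined, continuous and $\dbF$-adapted, so that the right-hand side of \rf{mebsvie} is meaningful. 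It remains only to verify that \rf{mebsvie} holds in the It\^o sense for each $s\in[t,T]$.

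To that end I would fix $s\in[t,T]$ and, for $r\in[s,T]$, apply It\^o's formula to $\rho\mapsto\Th(s,\rho,X^{t,x}(\rho))$ on $[r,T]$, using the dynamics \rf{SDET} of $X^{t,x}$. Denoting by $\cL\Th\deq\Th_s+\Th_x b+{1\over2}\si^\top\Th_{xx}\si$ the sum of the three terms on the first line of \rf{FBTPDE1} (here $\Th_s$ is, as in \rf{FBTPDE1}, the derivative of $\Th$ in its running-time variable), this produces
\begin{align*}
\Th(s,T,X^{t,x}(T))-\Th(s,r,X^{t,x}(r))&=\int_r^T(\cL\Th)(s,\rho,X^{t,x}(\rho))\,d\rho\\
&\q+\int_r^T\Th_x(s,\rho,X^{t,x}(\rho))\si(\rho,X^{t,x}(\rho))\,dW(\rho).
\end{align*}
The It\^o integral is well-defined because its integrand has continuous sample paths in $\rho$, hence is square-integrable on $[r,T]$ with probability one; no moment estimates are needed here.

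The last step is to feed in the PDE. Since $\Th$ solves \rf{FBTPDE1}, we have $(\cL\Th)(s,\rho,X^{t,x}(\rho))=-\,g\big(s,\rho,X^{t,x}(\rho),\Th(s,\rho,X^{t,x}(\rho)),\Th(\rho,\rho,X^{t,x}(\rho)),\Th_x(s,\rho,X^{t,x}(\rho))\si(\rho,X^{t,x}(\rho))\big)$, and its last four arguments are, by the very definition \rf{PDE-REP-BSVIE-re} of the candidate, precisely $X^{t,x}(\rho)$, $Y^{t,x}(s,\rho)$, $Y^{t,x}(\rho,\rho)$ and $Z^{t,x}(s,\rho)$; moreover the terminal condition in \rf{FBTPDE1} turns $\Th(s,T,X^{t,x}(T))$ into $\psi(s,X^{t,x}(T))$. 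Substituting these identities into the displayed equality and rearranging reproduces \rf{mebsvie}. Since $r\in[s,T]$ was arbitrary and this holds for every (a fortiori, Lebesgue-a.e.) $s\in[t,T]$, the pair \rf{PDE-REP-BSVIE-re} is an adapted solution of \rf{mebsvie} on $[t,T]$.

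I do not anticipate a genuine obstacle; this is the standard verification computation. The single point that needs care --- and the only place the \emph{extended}, non-local structure of the equation is actually felt --- is the identification of the slot $\Th(\rho,\rho,X^{t,x}(\rho))$ inside the nonlinearity with the diagonal value $Y^{t,x}(\rho,\rho)$ of the candidate; this is automatic from the Ansatz but has to be carried correctly through the bookkeeping of the three arguments of $\Th$. A mild regularity remark is also in order: $\Th\in C^{0,1,2}(\D[0,T]\times\dbR^d;\dbR^m)$, so its derivatives in the second and third variables are a priori available only on the triangle $\{s\les\rho\}$; this causes no difficulty, because in the It\^o computation the first argument is frozen at the value $s\les\rho$ and the differentiation takes place along $[r,T]\subset[s,T]$, staying inside $\D[0,T]$. (If one also wanted $(Y^{t,x}(\cd,\cd),Z^{t,x}(\cd,\cd))\in\cH^p[t,T]$, this would follow by combining polynomial growth bounds on $\Th$ and $\Th_x$ with the moment estimate \rf{lemma2.1.1} of \autoref{lmm:well-posedness-SDE}; but the statement, as phrased, asks only for properties (i)--(iii).)
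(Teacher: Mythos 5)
Your proposal is correct and follows essentially the same route as the paper: fix $s$, apply It\^o's formula to $\rho\mapsto\Th(s,\rho,X^{t,x}(\rho))$, use the PDE \rf{FBTPDE1} to convert the drift into $-g$ evaluated along the candidate processes, and invoke the terminal condition. Your additional remarks on adaptedness of the diagonal process and on the derivatives of $\Th$ being needed only on $\D[0,T]$ are accurate but not substantively different from the paper's argument.
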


\begin{proof}
For any fixed $s\in[t,T)$, using It\^{o}'s formula to $r\mapsto \Th(s,r,X^{t,x}(r))$ on $[s,T]$, we have
\begin{align*}
d\Th(s,r,X^{t,x}(r))&= \big[ \Th_r(s,r,X^{t,x}(r))+\Th_x(s,r,X^{t,x}(r))b(r,X^{t,x}(r))\\
&\q +{1\over 2}\si(r,X^{t,x}(r))^\prime\Th_{xx}(s,r,X^{t,x}(r))\si(r,X^{t,x}(r))\big]dr\\
&\q +\Th_x(s,r,X^{t,x}(r))\si(r,X^{t,x}(r))dW(r).
\end{align*}
Since $\Th$ satisfies PDE \rf{FBTPDE1}, one has
\begin{align*}
d\Th(s,r,X^{t,x}(r))&= -g(s,r,X^{t,x}(r),\Th(s,r,X^{t,x}(r)),\Th(r,r,X^{t,x}(r)),\Th_x(s,r,X^{t,x}(r))\si(r,X^{t,x}(r)))dr\\
&\q +\Th_x(s,r,X^{t,x}(r))\si(r,X^{t,x}(r))dW(r),
\end{align*}
and
\begin{equation*}
\Th(s,T,X^{t,x}(T))=\psi(s,X^{t,x}(T)).
\end{equation*}
Now, we define
$$
Y^{t,x}(s,r)=\Th(s,r,X^{t,x}(r)),\q Z^{t,x}(s,r)=\Th_x(s,r,X^{t,x}(r))\si(r,X^{t,x}(r)), \q (s,r)\in\D[t ,T].
$$
Then
\begin{align*}
Y^{t,x}(s,r)&=\psi(s,X^{t,x}(T)) + \int_r^T g(s,\t,X^{t,x}(\t),Y^{t,x}(s,\t),Y^{t,x}(\t,\t),Z^{t,x}(s,\t))d\t \\
            &\qq\qq\qq-\int_r^T Z^{t,x}(s,\t)dW(\t),
\end{align*}
which means that $(Y^{t,x}(s,r),Z^{t,x}(s,r));t\leq s\leq r\leq T$ satisfies BSVIE \rf{mebsvie}
and the desired representation \rf{PDE-REP-BSVIE-re} is obtained.
\end{proof}

We define
\bel{hatTh}
\hat\Th(t,s,x)\deq Y^{s,x}(t,s),\q (t,s,x)\in\D[0,T]\times\dbR^d.
\ee
By \autoref{Y-adapted}, $\hat\Th(\cd)$ defined by \rf{hatTh} is a deterministic function.
Now, we give the main result of this paper, which gives the converse of \autoref{PDE-REP-BSVIE}.

\begin{theorem}\label{BSVIE-REP-PDE}
Let  {\rm \ref{ass:F.1}}--{\rm \ref{ass:B4}} hold, then $\hat\Th(\cd)$ defined by \rf{hatTh}
is the unique classical solution to the system of parabolic partial differential equations \rf{FBTPDE1}.
\end{theorem}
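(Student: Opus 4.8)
The plan is to adapt the scheme of Pardoux--Peng \cite{Pardoux--Peng 1992}; the new ingredients are a flow (Markov) property for the non-local system \rf{mebsvie} and the fact, already secured in Sections \ref{Sec:Well-posedness}--\ref{Sec:Regularity}, that the non-local slot $\hat\Th(s,s,x)$ is a well-behaved coefficient. The proof falls into three parts: (1) a flow identity for \rf{mebsvie}; (2) the derivation of \rf{FBTPDE1} together with the upgrade $\hat\Th\in C^{0,1,2}$; (3) uniqueness via \autoref{PDE-REP-BSVIE}.

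\emph{Step 1 (flow property).} I would first show that for all $0\les b\les c\les T$, $0\les a\les c$ and $x\in\dbR^d$,
$$Y^{a,x}(b,c)=\hat\Th(b,c,X^{a,x}(c)),\qquad Z^{a,x}(b,c)=\hat\Th_x(b,c,X^{a,x}(c))\,\si(c,X^{a,x}(c)).$$
Fix $a,x$ and split the diffusion at time $c$: by uniqueness for \rf{SDET}, $X^{a,x}(\t)=X^{c,\xi}(\t)$ for $\t\ges c$, where $\xi\deq X^{a,x}(c)$. Then the slice $\{(Y^{a,x}(s,r),Z^{a,x}(s,r)):(s,r)\in\D[c,T]\}$ — which contains its own diagonal $\{Y^{a,x}(\t,\t):\t\in[c,T]\}$ — is an adapted solution of EBSVIE \rf{mebsvie} on $[c,T]$ with data $(c,\xi)$, so by the uniqueness of \autoref{thm-wellposedness} it agrees there with $(Y^{c,\xi},Z^{c,\xi})$; in particular $Y^{a,x}(\t,\t)=Y^{c,\xi}(\t,\t)$ on $[c,T]$. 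For a first index $b\les c$, equation \rf{mebsvie} for $Y^{a,x}(b,\cd)$ restricted to $[c,T]$ has become a genuine BSDE, its non-local term being now the \emph{known} process $Y^{c,\xi}(\t,\t)$; the same BSDE is solved by $Y^{c,\xi}(b,\cd)|_{[c,T]}$ (using the extension of \rf{mebsvie} to first indices below the starting time, cf.\ \autoref{Y-adapted}), so by uniqueness of BSDEs (\autoref{lmm:BSDE}) the two coincide, and evaluation at $c$ gives $Y^{a,x}(b,c)=Y^{c,\xi}(b,c)=\hat\Th(b,c,\xi)$. Differentiating this identity in $x$ — legitimate since $x\mapsto Y^{a,x}(b,c)$ is $C^2$ with $\partial_x Y^{a,x}=\nabla Y^{a,x}$ (\autoref{theorem-Y-regular} and \autoref{corollory-yz}) — and combining it with $Z^{a,x}(b,c)=\nabla Y^{a,x}(b,c)(\nabla X^{a,x}(c))^{-1}\si(c,X^{a,x}(c))$ (\autoref{Z(t,s)-repre}) and $\partial_x X^{a,x}(c)=\nabla X^{a,x}(c)$ (\autoref{diff-SDE}) yields the $Z$-identity. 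The interplay between the uniqueness of the full EBSVIE and the BSDE reduction of its off-diagonal slices is, I expect, the technical core of the argument.

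\emph{Step 2 (PDE and regularity).} Inserting the Step 1 identities into \rf{mebsvie} with data $(s,x)$ at first index $t_0$, and conditioning on $\cF_s$ (the stochastic integral drops out, and $\hat\Th(t_0,s,x)=Y^{s,x}(t_0,s)$ is deterministic), yields the mild form
$$\hat\Th(t_0,s,x)=\dbE\Big[\psi(t_0,X^{s,x}(T))+\int_s^T g\big(t_0,\t,X^{s,x}(\t),\hat\Th(t_0,\t,X^{s,x}(\t)),\hat\Th(\t,\t,X^{s,x}(\t)),\hat\Th_x(t_0,\t,X^{s,x}(\t))\si(\t,X^{s,x}(\t))\big)\,d\t\Big].$$
Thus, for each frozen $t_0$, $\hat\Th(t_0,\cd,\cd)$ is the Feynman--Kac representation of the \emph{local} quasilinear Cauchy problem obtained from \rf{FBTPDE1} by reading its non-local slot as the now-known continuous coefficient $(\t,x)\mapsto\hat\Th(\t,\t,x)$ (continuous by \autoref{thm-y-continuity} and \autoref{theorem-Y-regular}). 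Since \autoref{theorem-Y-regular} and \autoref{corollory-yz} already supply $\hat\Th\in C^{0,0,2}$ with continuous $\hat\Th_x$, the Pardoux--Peng argument goes through unchanged: this $C^{0,2}$-regularity together with the mild form forces $\hat\Th_s$ to exist and to equal, up to sign, the remaining — manifestly continuous — terms of \rf{FBTPDE1}, hence $\hat\Th\in C^{0,1,2}$; and then It\^o's formula applied to $r\mapsto\hat\Th(t_0,r,X^{s_0,x_0}(r))$ on $[s_0,T]$, compared in its finite-variation and martingale parts with \rf{mebsvie} (via Step 1) and evaluated at $r=s_0$ where $X^{s_0,x_0}(s_0)=x_0$, gives \rf{FBTPDE1} at the arbitrary interior point $(t_0,s_0,x_0)$. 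The terminal condition reads $\hat\Th(t,T,x)=Y^{T,x}(t,T)=\psi(t,x)$, and the boundary cases $t=s$ and $s=T$ follow by continuity.

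\emph{Step 3 (uniqueness).} If $\bar\Th\in C^{0,1,2}(\D[0,T]\times\dbR^d;\dbR^m)$ is another classical solution of \rf{FBTPDE1}, then \autoref{PDE-REP-BSVIE} and the uniqueness of the adapted solution to \rf{mebsvie} give $Y^{t,x}(s,r)=\bar\Th(s,r,X^{t,x}(r))$ on $\D[t,T]$, whence $\bar\Th(t,t,x)=Y^{t,x}(t,t)=\hat\Th(t,t,x)$: $\bar\Th$ and $\hat\Th$ agree on the diagonal. Fixing $t$, both $\bar\Th(t,\cd,\cd)$ and $\hat\Th(t,\cd,\cd)$ then solve one and the same local quasilinear parabolic Cauchy problem — with the common, now-known, diagonal coefficient — which has a unique classical solution by the Pardoux--Peng theory; hence $\bar\Th=\hat\Th$. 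The places demanding genuine care are the flow property of Step 1 and the bootstrap to $C^1$ in $s$ in Step 2; the remaining steps are routine bookkeeping.
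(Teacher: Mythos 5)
Your proposal is correct and follows essentially the same route as the paper: the flow identity $Y^{s,x}(t,s+h)=Y^{s+h,X^{s,x}(s+h)}(t,s+h)$ (which the paper asserts with "clearly" and you rightly flag as needing the interplay between EBSVIE uniqueness on $\D[c,T]$ and BSDE uniqueness for off-diagonal first indices), followed by the Pardoux--Peng telescoping/It\^o argument to bootstrap from $C^{0,0,2}$ to $C^{0,1,2}$ and read off the PDE, and finally \autoref{PDE-REP-BSVIE} plus uniqueness of the adapted solution to \rf{mebsvie} for uniqueness of the classical solution. The only presentational difference is that the paper carries out the $s$-differentiability step explicitly via a partition $s=s_0<\cdots<s_n=T$ together with Kolmogorov-continuity (H\"older in the starting time) estimates for $X$, $Y$, $Z$, where you defer to "the Pardoux--Peng argument goes through unchanged" after freezing the diagonal coefficient $\hat\Th(\t,\t,x)$ — the substance is the same.
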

\begin{proof}
By \autoref{theorem-Y-regular}, $\{Y^{s,x}(t,s);(t,s)\in\D[0,T],x\in\dbR^d\}\in C^{0,0,2}(\D[0,T]\times\dbR^d;\dbR^d)$.
By \rf{thm-regular-estimate-y-z}--\rf{thm-regular-estimate-y-z3} and \rf{hatTh}, we have
\bel{th-th}
|\hat\Th(t,s,x)|,~|\hat\Th(s,s,x)|\leq C_p(1+|x|).
\ee
By \rf{thm-regular-DY-DZ}--\rf{thm-regular-DY-DZ-sup}--\rf{thm-regular-DDY1} and \rf{hatTh}, we have
\bel{thx-thxx}
|\hat\Th_x(t,s,x)|,~|\hat\Th_{xx}(t,s,x)|\leq C_p.
\ee
For any $(t,s)\in\D[0,T]$, $x\in\dbR^d$, let $h>0$ be such that $s+h\leq T$.
Clearly, $Y^{s,x}(t,s+h)=Y^{s+h,X^{s,x}(s+h)}(t,s+h)$. Hence, we have
\begin{align*}
&\hat\Th(t,s+h,x)-\hat\Th(t,s,x)\\
&\q =\hat\Th(t,s+h,x)-\hat\Th(t,s+h,X^{s,x}(s+h))+\hat\Th(t,s+h,X^{s,x}(s+h))-\hat\Th(t,s,x)\\
&\q =\hat\Th(t,s+h,x)-\hat\Th(t,s+h,X^{s,x}(s+h))+Y^{s,x}(t,s+h)-Y^{s,x}(t,s)\\
&\q =-\int_s^{s+h}\big[\hat\Th_x(t,s+h,X^{s,x}(r))b(r,X^{s,x}(r))+{1\over 2}\si(r,X^{s,x}(r))^\prime\hat\Th_{xx}(t,s+h,X^{s,x}(r))\si(r,X^{s,x}(r))\big]dr\\
&\q {\hp = }-\int_s^{s+h}\hat\Th_x(t,s+h,X^{s,x}(r))\si(r,X^{s,x}(r))dW(r)\\
&\q {\hp = }- \int_s^{s+h} g(t,r,X^{s,x}(r),Y^{s,x}(t,r),Y^{s,x}(r,r),Z^{s,x}(t,r))dr +\int_s^{s+h} Z^{s,x}(t,r)dW(r).
\end{align*}
Let $s=s_0<s_1<...<s_n=T$, we have
\begin{align}
\nn&\psi(t,x)-\hat\Th(t,s,x)\\
\nn&\q =-\sum_{i=0}^{n-1}\int_{s_i}^{s_{i+1}}\big[\hat\Th_x(t,s_{i+1},X^{s_i,x}(r))b(r,X^{s_i,x}(r))\\
\nn&\qq\qq\qq+{1\over 2}\si(r,X^{s_i,x}(r))^\prime\hat\Th_{xx}(t,s_{i+1},X^{s_i,x}(r))\si(r,X^{s_i,x}(r))\\
\nn&\qq\qq\qq +g(t,r,X^{s_i,x}(r),Y^{s_i,x}(t,r),Y^{s_i,x}(r,r),Z^{s_i,x}(t,r))\big]dr\\
\label{psi-th}&\q {\hp = }-\sum_{i=0}^{n-1}\int_{s_i}^{s_{i+1}}\big[\hat\Th_x(t,s_{i+1},X^{s_i,x}(r))\si(r,X^{s_i,x}(r))-Z^{s_i,x}(t,r)\big]dW(r).
\end{align}
For any $p>2$, $0<\e<{1\over 2}-{1\over p}$, and the fixed $(t,x)\in[0,T]\times\dbR^d$, by \autoref{lmm:well-posedness-SDE}, \rf{thm-regular-sta-estimate-y-z}--\rf{thm-regular-sta-supestimate-y1} and Kolmogorov continuity theorem \cite[Theorem 3.1]{Friz 2014},
there is a  random variable $K(\om)\in L^p(\Om;\dbR)$ such that
\begin{align}
 \nn\sup_{r\in[0,T]}|X^{s,x}(r)-X^{s',x}(r)|\leq K(\om)|s-s'|^{{1\over2}- {1\over p}-\e},\\
  \label{x-y-y'}\sup_{r\in[0,T]}|Y^{s,x}(t,r)-Y^{s',x}(t,r)|\leq K(\om)|s-s'|^{{1\over2}- {1\over p}-\e},\\
\nn\sup_{r\in[0,T]}|Y^{s,x}(r,r)-Y^{s',x}(r,r)|\leq K(\om)|s-s'|^{{1\over2}- {1\over p}-\e}.
\end{align}
Thus,
\begin{align}
\nn &|X^{s,x}(r)-x|=|X^{s,x}(r)-X^{r,x}(r)| \leq \sup_{\t\in[0,T]}|X^{s,x}(\t)-X^{r,x}(\t)|
\leq K(\om)|s-r|^{{1\over2}- {1\over p}-\e},\\
\nn &|Y^{s,x}(t,r)-\hat\Th(t,r,x)|=|Y^{s,x}(t,r)-Y^{r,x}(t,r)| \\
\label{x-x'}&\q\leq \sup_{\t\in[0,T]}|Y^{s,x}(t,\t)-Y^{r,x}(t,\t)|\leq K(\om)|s-r|^{{1\over2}- {1\over p}-\e},\\
\nn &|Y^{s,x}(r,r)-\hat\Th(r,r,x)|=|Y^{s,x}(r,r)-Y^{r,x}(r,r)| \\
\nn&\q\leq \sup_{\t\in[0,T]}|Y^{s,x}(\t,\t)-Y^{r,x}(\t,\t)|\leq K(\om)|s-r|^{{1\over2}- {1\over p}-\e}.
\end{align}
By \autoref{Z(t,s)-repre}, \rf{thm-regular-DDY},  \autoref{lmm:well-posedness-SDE}, and Kolmogorov continuity theorem, we have
\begin{align}
 \nn&\sup_{r\in[0,T]}|Z^{s,x}(t,r)-Z^{s',x}(t,r)|\\
 \nn& \q =\sup_{r\in[0,T]}|\nabla Y^{s,x}(t,r)(\nabla X^{s,x}(r))^{-1}\si(r,X^{s,x}(r))-\nabla Y^{s',x}(t,r)(\nabla X^{s',x}(r))^{-1}\si(r,X^{s',x}(r))|\\
\label{z-z'} &\q\leq K(\om)|s-s'|^{{1\over2}- {1\over p}-\e}.
\end{align}
Similar to \rf{x-x'}, we have
\bel{z-z'1}
 |Z^{s,x}(t,r)-\hat\Th_x(t,r,x)\si(r,x)|\leq K(\om)|s-r|^{{1\over2}- {1\over p}-\e}.
\ee
Combining \rf{th-th}--\rf{thx-thxx}, \rf{x-x'} with \rf{z-z'1}, by dominated  convergence theorem,
let
$$\lim_{n\to\i}\sup_{0\leq i\leq n-1}|s_{i+1}-s_i|=0,$$
we obtain in the limit:
\begin{align*}
&\hat\Th(t,s,x) =\psi(t,x)+\int_s^T\big[\hat\Th_x(t,r,x)b(r,x)+{1\over 2}\si(r,x)^\prime\hat\Th_{xx}(t,r,x)\si(r,x)\\
&\qq\qq\qq\qq +g(t,r,x,\hat\Th(t,r,x),\hat\Th(r,r,x),\hat\Th_x(t,r,x)\si(r,x))\big]dr.
\end{align*}
Hence, $\hat\Th(t,s,x)\in C^{0,1,2}(\D[0,T]\times\dbR^d;\dbR^m)$ and satisfies the PDE \rf{FBTPDE1}.
Further, by \autoref{PDE-REP-BSVIE} and the uniqueness of the adapted solution to  EBSVIE \rf{mebsvie},
$\hat\Th(t,s,x)\in C^{0,1,2}(\D[0,T]\times\dbR^d;\dbR^m)$ is the unique classical solution to PDE \rf{FBTPDE1}.
\end{proof}
\begin{remark}\rm
Since the coefficient $\si(t,x)$ in \autoref{BSVIE-REP-PDE} is allowed to be degenerate and have a linear growth in $x$,
unlike the \cite[Theorem 5.2]{Wang--Yong 2018},  $\si(t,x)$  is not necessary to be uniformly positive
and bounded.
\end{remark}
\begin{remark}\rm
By \autoref{PDE-REP-BSVIE},  the formula \rf{PDE-REP-BSVIE-re}  gives a representation of the adapted solution to EBSVIE  \rf{mebsvie} via the classical solution to PDEs \rf{FBTPDE1}.
By \autoref{BSVIE-REP-PDE}, the formula \rf{hatTh} gives the probabilistic representation of the classical solution to non-local PDEs \rf{FBTPDE1}.
Thus, we generalize the nonlinear Feynman-Kac formula in Pardoux--Peng \cite{Pardoux--Peng 1992}.
\end{remark}


\end{document}